\documentclass[12pt,twoside]{amsart}
\usepackage{mathrsfs}
\usepackage{enumitem}
\usepackage{amsmath, amsfonts, amssymb, amsthm, amscd}
\usepackage[all]{xy}
\usepackage{fancyhdr}
\usepackage{hyperref}
\usepackage{geometry}
\usepackage{makecell}  
\usepackage{lipsum}

\usepackage[mathscr]{eucal}

\theoremstyle{plain}
\newtheorem{thm}{Theorem}[section]

\newtheorem{theorem}[thm]{Theorem}
\newtheorem{lemma}[thm]{Lemma}
\newtheorem{corollary}[thm]{Corollary}
\newtheorem{proposition}[thm]{Proposition}

\newtheorem{definition}[thm]{Definition}

\theoremstyle{remark}

\newtheorem{remark}[thm]{Remark}

\newtheorem{defn-thm}[thm]{Definition-Theorem}
\newtheorem{defn-lem}[thm]{Definition-Lemma}

\renewcommand{\bar}{\overline}

\renewcommand{\phi}{\varphi}

\newcommand{\C}{{\mathbb C}}

\newcommand{\R}{{\mathbb R}}

\newcommand{\Q}{{\mathbb Q}}

\newcommand{\M}{{\mathcal M}}
\newcommand{\T}{{\mathcal T}}

\renewcommand{\tilde}{\widetilde}
\newcommand{\p}{{\Phi}}

\newcommand{\EQ}{{\ \Longleftrightarrow \ }}

\begin{document}

\def\dW{\mbox{diff\:}\times \mbox{Weyl\:}}
\def\End{\operatorname{End}}
\def\Hom{\operatorname{Hom}}
\def\Aut{\operatorname{Aut}}
\def\Diff{\operatorname{Diff}}
\def\im{\operatorname{im}}
\def\tr{\operatorname{tr}}
\def\Pr{\operatorname{Pr}}
\def\Z{\bf Z}
\def\O{\mathcal{O}}
\def\CP{\mathbb{C}\mathbb{P}}
\def\P{\Phi}
\def\TT{\mathcal {T}_m^H}

\def\Q{\bf Q}
\def\R{\bf R}
\def\C{\mathbb{C}}
\def\H{H_{\mathrm{pr}}}
\def\Hil{\mathcal{H}}
\def\proj{\operatorname{proj}}
\def\id{\mbox{id\:}}
\def\a{\mathfrak a}
\def\d{\partial}
\def\tO{\tilde{\Omega}}

\def\b{\beta}
\def\c{\gamma}
\def\p{\partial}
\def\f{\frac}
\def\i{\sqrt{-1}}
\def\t{\tau}
\def\T{\mathcal{T}}
\def\Tan{\mathrm{T}^{1,0}}
\def\aTan{\mathrm{T}^{0,1}}
\def\Kahler{K\"{a}hler\:}
\def\w{\omega}
\def\X{\mathfrak{X}}
\def\K{\mathcal {K}}
\def\m{\mu}
\def\M{\mathcal {M}}
\def\Z{\mathcal {Z}_m}
\def\ZZ{\mathcal {Z}_m^H}

\newcommand{\bp}{\bar{\partial}}

\def\v{\nu}
\def\D{\mathcal{D}}
\def\U{\mathcal {U}}
\def\V{\mathcal {V}}
\def\Omegak{\frac{1}{k!}\bigwedge\limits^k\mu\lrcorner\Omega}
\def\Omegakp{\frac{1}{(k+1)!}\bigwedge\limits^{k+1}\mu\lrcorner\Omega}
\def\Omegakpp{\frac{1}{(k+2)!}\bigwedge\limits^{k+2}\mu\lrcorner\Omega}
\def\Omegakm{\frac{1}{(k-1)!}\bigwedge\limits^{k-1}\mu\lrcorner\Omega}
\def\Omegakmm{\frac{1}{(k-2)!}\bigwedge\limits^{k-2}\mu\lrcorner\Omega}
\def\Omegakk{\Omega_{i_1,i_2,\cdots,i_k}}
\def\Omegakkp{\Omega_{i_1,i_2,\cdots,i_{k+1}}}
\def\Omegakkpp{\Omega_{i_1,i_2,\cdots,i_{k+2}}}
\def\Omegakkm{\Omega_{i_1,i_2,\cdots,i_{k-1}}}
\def\Omegakkmm{\Omega_{i_1,i_2,\cdots,i_{k-2}}}
\def\mukm{\frac{1}{(k-1)!}\bigwedge\limits^{k-1}\mu}
\def\sumk{\sum\limits_{i_1<i_2<,\cdots,<i_k}}
\def\sumkm{\sum\limits_{i_1<i_2<,\cdots,<i_{k-1}}}
\def\sumkmm{\sum\limits_{i_1<i_2<,\cdots,<i_{k-2}}}
\def\sumkp{\sum\limits_{i_1<i_2<,\cdots,<i_{k+1}}}
\def\sumkpp{\sum\limits_{i_1<i_2<,\cdots,<i_{k+2}}}
\def\Omegakb{\Omega_{i_1,\cdots,\bar{i}_t,\cdots,i_k}}
\def\Omegakmb{\Omega_{i_1,\cdots,\bar{i}_t,\cdots,i_{k-1}}}
\def\Omegakpb{\Omega_{i_1,\cdots,\bar{i}_t,\cdots,i_{k+1}}}
\def\Omegakt{\Omega_{i_1,\cdots,\tilde{i}_t,\cdots,i_k}}

\title{Sections of Hodge bundles I: Global theory and applications to period maps}
\author{Kefeng Liu}
\address{Mathematical Sciences Research Center, Chongqing University of Technology, Chongqing 400054, China; \newline
Department of Mathematics,University of California at Los Angeles, Los Angeles, CA 90095-1555, USA}
\email{liu@math.ucla.edu}

\author{Yang Shen}
\address{Mathematical Sciences Research Center, Chongqing University of Technology, Chongqing 400054, China}
\email{syliuguang2007@163.com}
\date{}

\vspace{-20pt}

\begin{abstract}
We study global sections of Hodge bundles arising from two complementary constructions: a deformation-theoretic construction, which yields global geometric consequences for period maps, and a construction from the matrix representation of the image of the period map, which provides an explicit Euclidean realization. 
Combining these perspectives, we prove that the image of the lifted period map on the universal cover is contained in a complex Euclidean subspace of the period domain, thereby giving a partial solution to a conjecture of Griffiths on the global behavior of period maps. 
As an application, we construct a global complex affine structure on the Teichm\"uller space of Calabi--Yau type manifolds.
\end{abstract}

\maketitle
\parskip=5pt
\baselineskip=15pt


\tableofcontents



\setcounter{section}{-1}
\section{Introduction}

In 1968, Griffiths introduced the concept of the period map in Hodge theory \cite{Griffiths1, Griffiths2}. 
His fundamental insight was to interpret points of the moduli space of polarized manifolds in terms of the Hodge structures naturally associated to the underlying manifolds. 
This led to the notion of the period map, which has since become a central tool connecting deformation theory, complex geometry, and Hodge theory. 
While its infinitesimal properties are well understood through
the foundational work of Kodaira and Spencer in \cite{KS12,KS3},
the global behavior of period maps remains far more subtle,
especially in connection with the geometry of moduli spaces,
the Torelli problems, and conjectures of Griffiths in \cite{Griffiths4}.

The present paper develops a new approach to the global study of period maps based on global sections of Hodge bundles constructed simultaneously from the matrix representation of the period map and from deformation theory. 
The key observation is that these two apparently different constructions coincide: Lie-theoretic sections arising from the image of the period map extend holomorphically to globally defined sections obtained via Beltrami differentials and harmonic theory on the base manifold. 
This provides a conceptual bridge between the algebraic geometry of period domains and analytic deformation theory, and leads to several applications, including a partial solution to Griffiths’ conjecture, Conjecture~10.1 in~\cite{Griffiths4}, on the global behavior of period maps and the construction of a global complex affine structure on Teichm\"uller spaces of Calabi--Yau type manifolds.

We now present the main results of this paper.

Recall that the period domain $D$, defined as the set of polarized Hodge structures $$\left(H=\bigoplus_{p+q=n}H^{p,q}\right)$$ of certain type on the complex vector space $H$, can be realized as a flag domain $D = G_\mathbb{R} / V$ in the flag manifold $\check{D}= G_{\C} / B$. Here $G_\C$ is a complex Lie group, $B$ is a parabolic subgroup of $G_\C$, $G_\mathbb{R}$ is a real Lie subgroup of $G_\C$ and $V = B \cap G_{\mathbb{R}}$ is a compact Lie subgroup of $G_{\mathbb{R}}$.

In Hodge theory, the complex Lie subgroup $G_\C$ is the classical group $Sp(N,\C)$ when the weight $n$ is odd and $SO(r,s,\C)$ when the weight $n$ is even. Please see Chapter 4.4 of \cite{CMP} for the details.
The Hodge structure $(H=\oplus_{p+q=n}H_o^{p,q})$ at a fixed point $o$ in $D \subseteq \check{D}$ induces a Hodge structure of weight zero on the Lie algebra $\mathfrak{g}$ of $G_{\mathbb{C}}$ as
$$\mathfrak{g} = \bigoplus_{k \in \mathbb{Z}} \mathfrak{g}^{k, -k}, \quad \mathfrak{g}^{k, -k} = \{X \in \mathfrak{g} \mid X H_o^{p, q} \subseteq H_o^{p+k, q-k}, \ \forall\, p+q=n\}.$$ 
See Section \ref{Lie} for details.
Then the Lie algebra $\mathfrak{b}$ of $B$ can be identified with $\bigoplus_{k \geq 0} \mathfrak{g}^{k, -k}$ and the holomorphic tangent space $\mathrm{T}^{1,0}_o{D}$ of ${D}$ at the base point $o$ is naturally isomorphic to
$$\mathfrak{g} / \mathfrak{b} \simeq \oplus_{k \geq 1} \mathfrak{g}^{-k,k} \triangleq \mathfrak{n}_-.$$
We denote the corresponding unipotent group by 
$$N_- = \exp(\mathfrak{n}_-)$$ 
which is identified with $N_-(o)$, the unipotent orbit of the base point $o$, and is considered a complex Euclidean space inside $\check{D}$.

Let $\Phi:\, S\to \Gamma \backslash D$ be a period map arising from geometry, which means that we have an analytic family $f:\, \X\to
S$ of polarized manifolds over a complex manifold $S$, such that for any $t\in S$, the point $\Phi(t)$, modulo the action
of the monodromy group $\Gamma$, represents the polarized Hodge structure of the $n$-th primitive
cohomology $H_{\mathrm{pr}}^{n}(X_{t},\C)$ of the fiber $X_t=f^{-1}(t)$.

Since the period map is locally liftable, we can lift it to $\P :\,  \tilde S  \to D$ by taking the universal cover $\pi:\, \tilde S \to S$ such that the diagram
\begin{equation*}
\xymatrix{
 \tilde S  \ar[r]^-{\P} \ar[d]^-{\pi} & D \ar[d]^-{\pi} \\
S \ar[r]^-{\Phi} & \Gamma \backslash D
}
\end{equation*}
is commutative.

The lifted period map $\Phi:\, \tilde S \to D$ is equivalent to a decreasing sequence of the Hodge bundles 
$$\mathcal{F}^{n}\subset \cdots \subset \mathcal{F}^{0}=H\times \tilde S$$
equipped with the Gauss--Manin connection $\nabla$ satisfying $\nabla^{2}=0$, and the Griffiths transversality condition
$$
\nabla \mathcal{F}^{p}\subset \mathcal{F}^{p-1}\otimes \Omega^{1}_{\tilde S}.
$$
Therefore, the global study of the period map is reduced to the study of the global behavior of sections of the Hodge bundles.

Let $\Phi:\, \tilde S \to D$ be the lifted period map with base point $t_o\in \tilde S$ such that 
\begin{equation}\label{base Hodge}
o=\Phi(t_o)=\left(H_{\mathrm{pr}}^{n}(X_{t_{o}},\C)=\bigoplus_{p+q=n}H_{\mathrm{pr}}^{p,q}(X_{t_{o}})\right).
\end{equation}
Following an idea proposed by Schmid in a personal communication, we define 
$$\tilde S^{\vee} = \P^{-1}(N_- \cap D).$$ 
It is easy to prove that ${\tilde{S}} \setminus {\tilde{S}}^{\vee}$ is an analytic subvariety of ${\tilde{S}}$ with $\mbox{codim}_\C ({\tilde{S}} \setminus {\tilde{S}}^{\vee}) \ge 1$.

Let 
$$
\eta = \{\eta_{(0)}^T, \eta_{(1)}^T, \cdots, \eta_{(n)}^T\}^T,
$$
be a basis of the Hodge decomposition \eqref{base Hodge} at the base point, where $\eta_{(p)}$, considered as a column vector of elements in $H$, is a basis of $ H_{\mathrm{pr}}^{n-p,p}(X_{t_o}) $ for $ 0 \leq p \leq n $.

The image of the restricted period map $\Phi: \tilde{S}^\vee \to N_- \cap D$,
$$
\P(t) = \left(\begin{array}{ccc}
I & & \left( \Phi^{(p,q)}(t) \right)_{p < q} \\
& \ddots & \\
& & I
\end{array}\right) \in N_-,\,t\in \tilde{S}^\vee
$$
gives a basis
\begin{equation}\label{intr lm section}
  \Omega_{(p)}(t)=\eta_{(p)}+\sum_{k\ge 1} \Phi^{(p,p+k)}(t)\cdot \eta_{(p+k)},
\end{equation}
of the filtration $ F^{n-p} H_{\mathrm{pr}}^{n}(X_{t}, \C) $ modulo $ F^{n-p+1} H_{\mathrm{pr}}^{n}(X_{t}, \C) $ for $ 0 \leq p \leq n $. In other words, $ \Omega_{(0)}(t), \cdots, \Omega_{(p)}(t) $ are holomorphic sections of the Hodge bundle $ \mathcal{F}^{n-p} $ and generate $ \mathcal{F}^{n-p} $ pointwise on $\tilde{S}^\vee$.

Note that we have an induced family $\tilde{\mathcal X}\to \tilde S$ of polarized manifolds over the universal cover $\tilde S$ of $S$ with a polarization $\tilde{\mathcal L}$ on $\tilde{\mathcal X}$. Then the polarization $\tilde{\mathcal L}$ identifies the K\"ahler forms on $X_t$ and $X_{t_o}$ for any $t \in \tilde S$, which implies that the map 
$$\iota_t:\, {\mathrm{T}^{*}}^{1,0}X_t \hookrightarrow {\mathrm{T}_\C^{*}} X_t \stackrel{\cong}{\longrightarrow}{\mathrm{T}_\C^{*}} X_{t_o} \stackrel{\pi^{1,0}}{\longrightarrow}{\mathrm{T}^{*}}^{1,0}X_{t_o}$$
is an isomorphism for any $t\in \tilde S$, where $\pi^{1,0}:\,{\mathrm{T}_\C^{*}} X_{t_o} \to {\mathrm{T}^{*}}^{1,0}X_{t_o}$ and $\pi^{0,1}:\,{\mathrm{T}_\C^{*}} X_{t_o} \to {\mathrm{T}^{*}}^{0,1}X_{t_o}$ are the projection maps. 
Thus, for any $t \in \tilde S$, the Beltrami differential $$\phi(t) =\pi^{0,1}\circ \iota_t^{-1} \in A^{0,1}\left( X_{t_o}, \mathrm{T}^{1,0} X_{t_o} \right)$$ exists with supremum operator norm $\|\phi_t\| < 1 $ (see Definition \ref{son defn}), such that the complex structure of $ X_t $ is determined by $\phi(t)$, cf. Lemma \ref{Lemma-moduli}. 

Then, using the Beltrami differential $\phi(t)$ and its contraction operator
$$i_{\phi(t)}=\phi(t)\lrcorner(\cdot):\, A^{p,q}(X_{t_{o}})\to A^{p-1,q+1}(X_{t_{o}}),$$
as well as the operator $T := \bar\partial^{*}G\partial$ from the harmonic theory on the base fiber $X_{t_{o}}$, we can construct sections $\tO_{(p)}$ of the Hodge bundles $\mathcal{F}^{n-p}$ over $\tilde S$, for $0\le p \le n$, via
\begin{align}
\tO_{(p)}(t)=&\left[\mathbb H_{\mathrm{pr}} \left(e^{i_{\phi(t)}}\left((I+Ti_{\phi(t)})^{-1}\tilde \eta_{(p)}\right)\right)\right]\nonumber \\
=&\eta_{(p)} + \sum_{k\ge 1}\frac{1}{k!}\left[\mathbb H_{\mathrm{pr}}\left(i_{\phi(t)}^k (I+Ti_{\phi(t)})^{-1}\tilde \eta_{(p)} \right)\right] \label{intr def section} 
\end{align}
Here, $\mathbb H_{\mathrm{pr}}$ denotes the harmonic projection onto the space of harmonic and primitive forms on $X_{t_0}$, $[\cdots]$ denotes the associated cohomology class, and $\tilde \eta_{(p)}$ is a basis of harmonic $(n-p,p)$-forms on $X_{t_0}$ representing the cohomology class $\eta_{(p)} = [\tilde \eta_{(p)}]$ at the base point.

By comparing the types of the summands in \eqref{intr lm section} and \eqref{intr def section}, we see that the sections $\Omega_{(p)}(t)$ and $\tO_{(p)}(t)$ of the Hodge bundles coincide on $t\in \tilde{S}^\vee$. Since $ \tO_{(p)}(t) $ is globally defined on ${\tilde{S}}$, the holomorphic section $\Omega_{(p)}(t) $ extends holomorphically to $ \tO_{(p)}(t) $ on ${\tilde{S}}$ by Riemann extension theorem. 

Now we deduce the following main result of this paper, which establishes a bridge between the algebraic description via the Lie theory of the period domain and the analytic construction from deformation theory.
\begin{theorem}\label{intr main bundle}
Let the notations be as above. Then the sections $\Omega_{(p)}(t)$ in~\eqref{intr lm section}, defined over $t \in \tilde S^{\vee}$ via the matrix representation of the image $\P(t)$ in $N_-$, can be holomorphically extended to the sections $\tO_{(p)}(t)$ in~\eqref{intr def section}, defined over $t \in \tilde S$ using the Beltrami differential $\phi(t)$ associated to the complex structure $X_{t} = (X_{t_{o}})_{\phi(t)}$. In particular, the sections $\tO_{(p)}(t)$ are holomorphic on $\tilde S$.
\end{theorem}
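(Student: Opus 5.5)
The plan has three parts. First we show that $\tO_{(p)}(t)$ is a well-defined section of $\mathcal{F}^{n-p}$ over all of $\tilde S$. Then we show that $\tO_{(p)}$ agrees with $\Omega_{(p)}$ on $\tilde S^\vee$. Finally we deduce holomorphy on $\tilde S$ by a Riemann extension argument.

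\textbf{Step 1: $\tO_{(p)}$ is a well-defined section of $\mathcal{F}^{n-p}$.} For every $t\in\tilde S$, Lemma \ref{Lemma-moduli} gives the Beltrami differential $\phi(t)$ with $\|\phi(t)\|<1$.
\begin{itemize}
\item Since $T=\bar\partial^*G\partial$ is bounded in suitable H\"older/Sobolev norms and $\|i_{\phi(t)}\|<1$, the operator $I+Ti_{\phi(t)}$ is invertible by a Neumann series. This may require a small-norm estimate; if the sup-norm bound is not directly enough, we would use the $L^2$ bound for $T$ together with elliptic regularity.
\item Next, $e^{i_{\phi(t)}}$ maps $(n,0)$-type data on $X_{t_o}$ to forms of type $(n,0)$ on $X_t$. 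More generally, for $\alpha\in A^{n-p,p}(X_{t_o})$ we have $e^{i_{\phi(t)}}\alpha\in F^{n-p}A^n(X_t)$, by the standard description of the $(1,0)$-forms on $X_t$ as $(I+\phi)\lrcorner$ of $(1,0)$-forms on $X_{t_o}$.
\item The key identity, as in the Todorov--Tian / Liu--Rao--Yang construction, is that for $\beta=(I+Ti_{\phi})^{-1}\tilde\eta_{(p)}$ the form $e^{i_\phi}\beta$ is $d$-closed on $X_t$. This follows from the commutator formula $d e^{i_\phi}=e^{i_\phi}(d+\partial\circ i_\phi-i_\phi\circ\partial-\ldots)$ together with the integrability equation $\bar\partial\phi=\tfrac12[\phi,\phi]$, applied to the equation $\beta+T i_\phi\beta=\tilde\eta_{(p)}$.
\item Its cohomology class therefore lies in $F^{n-p}H^n(X_t)$. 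Taking the harmonic projection on $X_{t_o}$ does not change the class.
\end{itemize}

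\textbf{Step 2: agreement with $\Omega_{(p)}$ on $\tilde S^\vee$.} Expanding $e^{i_\phi}$, the term $i_\phi^k(\cdots)$ has type $(n-p-k,p+k)$ on $X_{t_o}$. The $k=0$ term $\mathbb H_{\mathrm{pr}}\big((I+Ti_\phi)^{-1}\tilde\eta_{(p)}\big)$ equals $\tilde\eta_{(p)}$ because $\mathbb H\circ T=0$. Hence
\[
\tO_{(p)}(t)\in \eta_{(p)}+\bigoplus_{k\ge1}H^{n-p-k,p+k}(X_{t_o}).
\]
On $\tilde S^\vee$, the element $\Omega_{(p)}(t)$ has the same normal form. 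Both $\Omega_{(0)},\dots,\Omega_{(p)}$ and $\tO_{(0)},\dots,\tO_{(p)}$ lie in $F^{n-p}_t$, and $F^{n-p}_t$ is spanned by the $\Omega_{(q)}(t)$, $q\le p$. Since $\Phi(t)\in N_-$, the map $F^{n-p}_t\to \bigoplus_{q\le p}H^{n-q,q}_o$ given by projection is injective. Therefore
\[
\tO_{(p)}(t)-\Omega_{(p)}(t)\in F^{n-p}_t\cap\bigoplus_{k\ge1}H^{n-p-k,p+k}_o=0,
\]
so $\tO_{(p)}=\Omega_{(p)}$ on $\tilde S^\vee$.

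\textbf{Step 3: holomorphy on $\tilde S$.} On $\tilde S^\vee$ the sections $\Omega_{(p)}$ are holomorphic, since the entries of $\Phi$ are holomorphic there. The complement $\tilde S\setminus\tilde S^\vee$ is an analytic subvariety of codimension at least $1$. The section $\tO_{(p)}$ is continuous on all of $\tilde S$, because $\phi(t)$ depends continuously (indeed smoothly) on $t$ and every operator in its construction is continuous in $\phi$. A continuous function on $\tilde S$ that is holomorphic off an analytic subvariety is holomorphic, by the Riemann extension theorem. Hence $\tO_{(p)}$ is holomorphic on $\tilde S$ and is the holomorphic extension of $\Omega_{(p)}$.

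\textbf{Main obstacle.} I expect the hardest part to be Step 1: proving $d$-closedness of $e^{i_\phi}(I+Ti_\phi)^{-1}\tilde\eta_{(p)}$ for $p>0$, together with global convergence and invertibility using only $\|\phi(t)\|<1$ rather than $\phi$ being small. I would first try the Liu--Rao--Yang extension formula, then check that the resulting class is primitive so that $\mathbb H_{\mathrm{pr}}$ is legitimate.
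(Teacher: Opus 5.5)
Your three steps reproduce the paper's own architecture (Step~1 is Theorem~\ref{Theorem-closedsection}, Step~2 is Lemma~\ref{def=hodge}, Step~3 is the Riemann extension). However, the analytic core of Step~1 has a genuine gap, exactly where you anticipated it.

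Boundedness of $T$ in H\"older or Sobolev norms with some constant cannot give a convergent Neumann series as $\|\phi(t)\|\to 1$. The paper instead uses the quasi-isometry \eqref{quasi-iso}, i.e.\ $\|Tg\|_{L^2}\le\|g\|_{L^2}$. It needs the same contraction a second time for $d$-closedness (Proposition~\ref{prop1-section}): the form $\Psi=\bar\partial\sigma+\partial(\phi\lrcorner\sigma)$ satisfies $\Psi=-T i_{\phi}\Psi$ and must therefore vanish. This does not follow from the commutator formula alone.

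Even granting $\|T\|_{L^2}\le 1$, the hypothesis $\|\phi(t)\|<1$ only gives $\|i_{\phi(t)}\alpha\|\le\|\phi(t)\|\,\|\alpha\|$ for $(1,0)$-forms. Since $i_\phi$ acts as a derivation, the constant Beltrami differential $\phi=\lambda\sum_j d\bar z_j\otimes\partial_{z_j}$ on a flat torus has $\|\phi\|=|\lambda|$ but $\|i_\phi(dz_1\wedge dz_2)\|=\sqrt{2}\,|\lambda|\,\|dz_1\wedge dz_2\|$. Such $\phi(t)$ with $|\lambda|$ close to $1$ occur in polarized families, e.g.\ $E_\tau\times E_\tau$ with the product polarization and $\tau$ near the real axis. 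So for $\tilde\eta_{(p)}$ with $n-p\ge 2$, and typically also for the auxiliary form $\Psi$, neither $\|Ti_{\phi(t)}\|<1$ nor the invertibility of $I+Ti_{\phi(t)}$ follows from $\|\phi(t)\|<1$.

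Lemma~\ref{Lemma-moduli} passes from the inequality \eqref{norm} on vectors to $(i_{\phi(t)}\alpha,i_{\phi(t)}\alpha)_x<(\alpha,\alpha)_x$ for all $\alpha\in A^{p,q}(X)$ without justification, so the paper does not close this gap either. What is missing is an argument for the existence of $\tO_{(p)}(t)$ when $\phi(t)$ is not small. Your Steps~2 and~3, including the continuity needed for the Riemann extension, rest on it.

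Granting Step~1, the rest of your argument is sound. Your Step~2 uses $F^{n-p}_t\cap\bigoplus_{k\ge1}H^{n-p-k,p+k}_{o}=0$ for $\Phi(t)\in N_-$. This is an equivalent and slightly shorter substitute for the paper's coefficient comparison through Lemma~\ref{def basis}.

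In Step~3 you make explicit the continuity of $t\mapsto\tO_{(p)}(t)$ that the paper uses tacitly. This requires choosing $d_t$, hence $\phi(t)$, continuously on small open sets of $\tilde S$ (Ehresmann plus a parametric Moser argument). Local choices suffice because the holomorphic extension is unique.

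Finally, your primitivity check does go through. The Moser normalization makes $\omega$ of type $(1,1)$ for both complex structures, which is equivalent to $i_{\phi(t)}\omega=0$, so $L=\omega\wedge(\cdot)$ commutes with $i_{\phi(t)}$. Since $[L,\bar\partial^*]$ is a constant multiple of $\partial$ by the K\"ahler identities, $L$ also commutes with $T$. Hence $\sigma(\phi(t))$ is primitive pointwise whenever $\tilde\eta_{(p)}$ is, and $\mathbb H_{\mathrm{pr}}$ does not change its cohomology class.
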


It is interesting to observe that the local property of being holomorphic for the sections of the Hodge bundles above arises from $ \Omega_{(p)}(t) $, which is given by the image of the period map, while the global existence of the sections comes from $ \tO_{(p)}(t) $, which is provided by deformation theory. 

Theorem~\ref{intr main bundle} implies that the limits of the blocks $\Phi^{(p,p+k)}(t)$ for $0 \le p \le n$ and $1 \le k \le n - p$ in~\eqref{intr lm section} exist as $t$ approaches points in the analytic subvariety ${\tilde{S}} \setminus \tilde{S}^{\vee}$. By the definition of $\tilde{S}^{\vee}$, this implies that ${\tilde{S}} = \tilde{S}^{\vee}$.
Hence, we obtain the following theorem, which provides a partial solution to Griffiths' Conjecture~10.1 in~\cite{Griffiths4}.

\begin{thm}\label{inmain}
Given an analytic family $f:\, \X \to S$ of polarized manifolds, the image of the lifted period map $\Phi:\, {\tilde{S}} \to D$ to the universal cover ${\tilde{S}}$ of $S$ lies in the complex Euclidean space $N_-$. 
\end{thm}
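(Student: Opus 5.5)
The plan is to deduce Theorem~\ref{inmain} from Theorem~\ref{intr main bundle} by showing that the exceptional set $\tilde S\setminus\tilde S^{\vee}$ is empty.

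First, recall why $\tilde S\setminus\tilde S^\vee$ is analytic. A point $\Phi(t)\in D\subset\check D$ lies in the open cell $N_-=N_-(o)$ exactly when the filtration $F^\bullet_t$ is in general position with respect to the opposite filtration $\bar F_o^{\bullet}$. Equivalently, for each $p$, the square block $\Phi^{(p,p)}(t)$, that is, the $\eta_{(p)}$-component of a frame of $F^{n-p}_t$ modulo $F^{n-p+1}_t$ written in block lower-triangular form with respect to $\eta$, must be invertible. So $\tilde S\setminus\tilde S^\vee$ is the common zero locus of determinants of holomorphic minors of a local holomorphic frame of the Hodge bundles. It is therefore an analytic subvariety of codimension $\ge1$, since $t_o\in\tilde S^\vee$ and $\tilde S$ is connected.

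Next, apply Theorem~\ref{intr main bundle}. The sections $\tO_{(p)}(t)$, $0\le p\le n$, are holomorphic on all of $\tilde S$, lie in $\mathcal F^{n-p}$, and by formula \eqref{intr def section} have $\eta_{(p)}$-component identically equal to the identity block $I$. Their $\eta_{(q)}$-components for $q<p$ vanish by type: $i_{\phi}^k$ only raises the $(0,1)$-degree, so only classes of type $(n-p-k,p+k)$ occur. Hence at every $t\in\tilde S$ the vectors $\tO_{(0)}(t),\dots,\tO_{(p)}(t)$ are linearly independent. Their number is $\sum_{q\le p}h^{n-q,q}=\dim F^{n-p}_t$, so they form a basis of the fibre $F^{n-p}_t$. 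Writing the full frame $\tO(t)=(\tO_{(0)}^T,\dots,\tO_{(n)}^T)^T$ in the basis $\eta$ gives a block unipotent matrix whose off-diagonal blocks are the holomorphic coefficients $\tilde\Phi^{(p,p+k)}(t)$. This matrix lies in $N_-$ for every $t\in\tilde S$, and the point of $\check D$ it determines is precisely $\Phi(t)$, because the frame spans the Hodge filtration. Thus $\Phi(t)\in N_-\cap D$ for all $t$, so $\tilde S^\vee=\tilde S$. Equivalently, the blocks $\Phi^{(p,p+k)}$ on $\tilde S^\vee$ have the holomorphic extensions $\tilde\Phi^{(p,p+k)}$, so their limits exist at points of the exceptional set, forcing the limit point into $N_-$.

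The main obstacle is the claim that $\tO_{(p)}(t)$ actually lies in $F^{n-p}_t$ and is well defined at every $t\in\tilde S$, not just on $\tilde S^\vee$ or near $t_o$. This needs the global Beltrami differential $\phi(t)$ with $\|\phi(t)\|<1$ from Lemma~\ref{Lemma-moduli}, and invertibility of $I+Ti_{\phi(t)}$, which should follow from that operator norm bound. Both are supplied by Theorem~\ref{intr main bundle}, which we assume. A secondary point is that the frame must determine $\Phi(t)$ itself rather than some other flag. This holds because the $\tO_{(p)}(t)$ agree with the $\Omega_{(p)}(t)$ on the dense open set $\tilde S^\vee$, and both sides are continuous in $t$.
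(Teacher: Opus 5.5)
Your proof is correct. Its engine is the same as the paper's: the globally defined frame $\tO(t)$ of Theorem~\ref{Theorem-closedsection}, whose $\eta_{(p)}$-block is $I$ and whose $\eta_{(q)}$-blocks for $q<p$ vanish by type. Hence $\tO_{(0)}(t),\dots,\tO_{(p)}(t)$ is a basis of $F^{n-p}_t$ at every $t\in\tilde S$ (Lemma~\ref{def basis}).

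You differ from the paper only in the last step.
\begin{itemize}
\item The paper's proof of Theorem~\ref{main} chooses a path $\gamma$ from $t_o$ to $t$ with $\gamma([0,1))\subset\tilde S^\vee$. It uses Lemma~\ref{def=hodge} to identify the blocks $\Phi^{(p,p+k)}(\gamma(s))$ with the coefficients of $\tO_{(p)}(\gamma(s))$, and concludes that these blocks have finite limits as $s\to1$. This is your closing ``equivalently'' sentence.
\item Your main argument is pointwise. The transition matrix from $\eta$ to the adapted frame $\tO(t)$ is block upper triangular with identity diagonal blocks. So every leading principal block minor equals $1$, and Lemma~\ref{transversal} gives $\Phi(t)\in N_-$ directly.
\end{itemize}

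Your route is shorter and uses fewer inputs. Only Theorem~\ref{Theorem-closedsection}, Lemma~\ref{def basis} and Lemma~\ref{transversal} are needed. The holomorphy of $\tO_{(p)}$, the coincidence $\Omega_{(p)}=\tO_{(p)}$ on $\tilde S^\vee$, and the analyticity of $\tilde S\setminus\tilde S^\vee$ (your first paragraph) play no role in Theorem~\ref{inmain}. The paper's path argument instead phrases the result as an extension statement: the $N_-$-coordinates $\Phi^{(p,p+k)}$ stay finite at the exceptional set. This fits the narrative of Theorem~\ref{intr main bundle}.

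Two small remarks.
\begin{itemize}
\item A block unipotent matrix in $GL(H)$ is not a priori in $G_\C$. You should therefore justify ``this matrix lies in $N_-$'' through Lemma~\ref{transversal}. Alternatively, use continuity from the dense set $\tilde S^\vee$, where the matrix is the $N_-$-representative of $\Phi(t)$, together with closedness of $G_\C$.
\item Your ``secondary point'' is unnecessary. Since $\tO_{(p)}(t)\in F^{n-p}_t$ for every $t$, a dimension count already shows that the flag spanned by the frame is $\Phi(t)$.
\end{itemize}
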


Theorem~\ref{inmain} shows that, after passing to the universal cover, the period map admits a global Euclidean realization inside the period domain. 
This provides a foundation for studying asymptotic behavior and compactifications of period maps.

Building on this Euclidean description and the underlying Hodge-theoretic constructions, we will prove that there exist affine structures on the Teichm\"uller spaces of Calabi--Yau type manifolds.

More precisely, we consider the moduli space of certain polarized manifolds with level $m$ structure, and let $\Z$ be one of the irreducible components of the moduli space. Then the Teichmüller space $\T$ is defined as the universal cover of $\Z$. Let $m_0$ be a positive integer. As a technical assumption, we will require that $\Z$ is smooth and carries an analytic family 
$$f_m:\, \U_m \to \Z$$ 
of polarized manifolds with level $m$ structure for all $m \geq m_0$. It is easy to show that $\T$ is independent of the levels as given in Lemma \ref{independent of m}.

\begin{thm}\label{affine-intro}
Let $X$ be a Calabi--Yau type manifold.
Assume that the Teichm\"uller space $\T$ of Calabi--Yau type manifolds containing $X$ exists as in Section \ref{moduli and period}.
Then there exists a global complex affine structure on $\T$.
\end{thm}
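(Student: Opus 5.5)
The plan is to realize the global affine structure as coordinates read off from the $N_-$-valued period map of Theorem~\ref{inmain}, applied to the analytic family $f_m:\U_m\to\Z$ lifted to $\T$. By Lemma~\ref{independent of m} we may fix one level $m\ge m_0$ and regard $\T$ as the universal cover of $\Z$. We then obtain a lifted period map $\Phi:\T\to D$, and by Theorem~\ref{inmain} its image lies in $N_-$. We fix the base point $t_o$ with $X=X_{t_o}$ and the adapted basis $\eta$. For Calabi--Yau type, $\dim H^{n,0}_{\mathrm{pr}}=1$ and the infinitesimal Torelli map $\mathrm{T}^{1,0}_{t}\T\to\Hom(H^{n,0},H^{n-1,1})$ is an isomorphism.

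We define $\tau:\T\to \C^N$, with $N=\dim H^{n-1,1}_{\mathrm{pr}}(X)=\dim\T$, by taking the first off-diagonal block
\[
\tau(t)=\Phi^{(0,1)}(t)\in \C^{N}.
\]
Equivalently, $\tau$ is the composition of $\Phi$ with the projection $N_-\to\mathfrak g^{-1,1}\to \Hom(H^{n,0}_o,H^{n-1,1}_o)\cong\C^N$. By Theorem~\ref{intr main bundle}, $\Omega_{(0)}(t)=\eta_{(0)}+\Phi^{(0,1)}(t)\eta_{(1)}+\cdots$ is the global holomorphic section $\tO_{(0)}(t)$ over all of $\T$. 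Hence $\tau$ is a globally defined holomorphic map on $\T$.

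It then suffices to show that $\tau$ is a local biholomorphism everywhere. Its differential at $t$ is the $(n-1,1)$-component, with respect to the base point decomposition, of $\nabla\Omega_{(0)}(t)$. In the deformation-theoretic form \eqref{intr def section}, the linear part of $\tO_{(0)}$ is $[\mathbb H_{\mathrm{pr}}(i_{\phi(t)}\tilde\eta_{(0)})]$ plus higher-order corrections.

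The key step is to check that $d\tau_t$ is injective at every $t$, not only at $t_o$. We would argue as follows. By Griffiths transversality, $\nabla_v\Omega_{(0)}(t)\in F^{n-1}_t$. Its class modulo $F^n_t$ equals the Kodaira--Spencer image $\kappa(v)\lrcorner\Omega_{(0)}(t)$, which is nonzero for $v\ne 0$ by local Torelli for Calabi--Yau type. We then project $F^{n-1}_t$ onto $H^{n-1,1}_o$ along $F^n_t\oplus\bigoplus_{q\ge2}H^{n-q,q}_o$. This projection is an isomorphism $F^{n-1}_t/F^n_t\to H^{n-1,1}_o$ exactly because $\Phi(t)\in N_-$, i.e.\ $F^{n-1}_t$ is transverse to $\bigoplus_{q\ge2}H_o^{n-q,q}$. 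The normalization of $\Omega_{(0)}$ (coefficient $1$ on $\eta_{(0)}$) ensures no $\eta_{(0)}$-term appears in the derivative. Therefore $d\tau_t(v)=\mathrm{proj}(\nabla_v\Omega_{(0)})\neq0$, and $\tau$ is a local embedding between spaces of equal dimension, hence locally biholomorphic.

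Finally, the transition maps between the local charts given by restricting $\tau$ are all the identity of $\C^N$, so the atlas $\{(U,\tau|_U)\}$ is a holomorphic affine atlas on $\T$. This is the desired global complex affine structure.

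The main obstacle is the injectivity of $d\tau$ away from $t_o$. I expect it to rest on the global statement $\Phi(\T)\subset N_-$: without it, the projection to $H^{n-1,1}_o$ degenerates on $\T\setminus\T^\vee$. I would therefore check carefully that the block structure of $N_-$ gives the transversality claimed above.
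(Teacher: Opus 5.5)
Your proposal is correct and is essentially the paper's proof. The paper also defines the global map $t\mapsto\Phi^{(0,1)}(t)$ on $\mathcal{T}$, uses Theorem~\ref{main4} ($\Phi(\mathcal{T})\subset N_-$) to make it globally defined, and gets non-degeneracy of its Jacobian at every $t$ from Remark~\ref{trans remark} together with the local Torelli isomorphism in the Calabi--Yau type condition (after the Tate twist that makes $h^{n,0}=1$). Your projection onto $H^{n-1,1}_o$ along $F^n_t\oplus\bigoplus_{q\ge 2}H^{n-q,q}_o$ is an equivalent repackaging of the identity $\partial_\mu\Omega_{(0)}=\partial_\mu\Phi^{(0,1)}\cdot\Omega_{(1)}$ from Lemma~\ref{lm derivative lemma}, which holds on all of $\mathcal{T}$ once $\mathcal{T}^\vee=\mathcal{T}$.
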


Here, the term {global complex affine structure} refers to the existence of a globally defined, non-degenerate holomorphic map
$$
\Psi: \T \to \C^{\dim_\C \T}.
$$

We illustrate Theorem~\ref{affine-intro} in the setting of Calabi--Yau manifolds. Let
\begin{equation}\label{intr pm CY type}
  \Phi:\, \T \to N_- \cap D,\, t\mapsto \left(\Phi^{(p,q)}(t)\right)
\end{equation}
denote the period map from the Teichmüller space $\T$ of Calabi--Yau manifolds. 

From Griffiths transversality and the infinitesimal Torelli theorem for Calabi--Yau manifolds, it follows that the Jacobian of the $(0,1)$-block,
$$
\frac{\partial\Phi^{(0,1)}}{\partial t_\mu}(t),
$$
which represents the first component
$$
\mathrm{Hom}\left(F_t^{n},\, F_t^{n-1}/F_t^{n}\right)
$$
of the differential of the period map at $t$, is non-degenerate, where $(t_\mu)$ denotes a holomorphic coordinate system around a point $t \in \T$. 
Consequently, the holomorphic  map
$$
\Psi: \T \to \C^{\dim_\C \T},\quad t\mapsto \Phi^{(0,1)}(t)
$$
defines a global complex affine structure on $\T$.

This paper is the first in a two-part series. In the subsequent paper~\cite{LS26-2}, we develop a non-polarized counterpart of this theory. 
While the present work focuses on global aspects of period maps for polarized families, Part~II treats the intrinsic theory for families of compact K\"ahler manifolds, introducing intrinsic period maps and Hodge maps for $(p,p)$-classes, together with applications to K\"ahler cones, algebraic approximation, and the variational Hodge conjecture. 
Taken together, the two papers suggest a unified framework in which the Beltrami differential, together with the Hodge theory of the central fiber, emerges as the fundamental structure governing both global and local variations of Hodge structures.

This paper is organized as follows. In Section~\ref{Lie}, we study period domains and period maps from a Lie-theoretic viewpoint, with particular attention to the unipotent orbit $N_-$ and the open subset ${\tilde{S}}^{\vee}$. Sections~\ref{basic lemma} and~\ref{Section-extensionforms} develop the construction of sections of Hodge bundles via the period map, through the matrix representation of its image in $N_-$, and via deformation theory, using Beltrami differentials, harmonic theory, and a closed extension formula. In Section~\ref{main proof}, we apply these constructions to establish a partial result toward Griffiths’ Conjecture~10.1. 
Section~\ref{affine and examples section} reviews the notions of moduli spaces and Teichm\"uller spaces, and constructs a global complex affine structure on the Teichm\"uller space of Calabi--Yau type manifolds.



\section{Period domains, period maps and unipotent orbits in period domains}\label{Lie}
In this section we review the definitions and basic properties of period domains  and period maps from Lie theory point of views. We also introduce the unipotent orbit in the period domain, which is key to the solution of Griffiths conjecture.

Let $H_{\mathbb{Z}}$ be a fixed lattice and $H=H_{\mathbb{Z}}\otimes_{\mathbb{Z}}
\C$ its  complexification. Let $n$ be a positive integer, and $Q$ a
bilinear form on $H_{\mathbb{Z}}$ which is symmetric if $n$ is even
and skew-symmetric if $n$ is odd. Let $h^{i,n-i}$, $0\le i\le n$, be
integers such that $$\sum_{i=0}^{n}h^{i,n-i}=\dim_{\C}H.$$ The period
domain $D$ for the polarized Hodge structures of type
$$\{H_{\mathbb{Z}}, Q, h^{i,n-i}\}$$ is the set of all the collections
of the subspaces $H^{i,n-i}$, $0\le i\le n$, of $H$ such that
$$H=\bigoplus_{0\le i\le n}H^{i,n-i}, \,H^{i,n-i}=\bar{H^{n-i,i}}, \, \dim_{\C} H^{i,n-i}= h^{i,n-i} \text{ for } 0\le i\le n,$$
and on which $Q$ satisfies the  Hodge-Riemann bilinear relations,
\begin{eqnarray}
Q\left ( H^{i,n-i}, H^{j,n-j}\right )=0\text{ unless }i+j=n;\label{HR1}\\
\left (\sqrt{-1}\right )^{2k-n}Q\left ( v,\bar v\right )>0\text{ for
}v\in H^{k,n-k}\setminus\{0\}. \label{HR2}
\end{eqnarray}

Alternatively, in terms of Hodge filtrations, the period domain $D$
is the set of all the  collections of the filtrations
$$H=F^{0}\supset F^{1}\supset \cdots \supset F^{n},$$ such that
\begin{align}
& \dim_{\mathbb{C}} F^i=f^i,  \label{periodcondition} \\
& H=F^{i}\oplus \bar{F^{n-i+1}},\text{ for } 0\le i\le n,\nonumber
\end{align}
where $f^{i}= h^{n,0}+\cdots +h^{i,n-i}$, and on which $Q$ satisfies the Hodge-Riemann bilinear relations in the form of Hodge filtrations
\begin{align}
& Q\left ( F^i,F^{n-i+1}\right )=0;\label{HR1'}\\
& Q\left ( Cv,\bar v\right )>0\text{ if }v\ne 0,\label{HR2'}
\end{align}
where $C$ is the Weil operator given by $$Cv=\left (\sqrt{-1}\right )^{2k-n}v$$ for $v\in F^{k}\cap \bar{F^{n-k}}$.

\begin{definition}
A polarized manifold is a pair $(X,L)$, where $X$ is a projective manifold and $L$ is an ample line bundle on $X$. Sometimes, we simply denote $X$ as a polarized manifold if the ample line bundle 
$L$ on it is obvious.
\end{definition}

From classical Hodge theory, we know that there exists a polarized Hodge structure of weight $k$
on the primitive cohomology group $H_{\mathrm{pr}}^k(X,\C)$ for $0\le k\le 2\dim_\C X$. 

An analytic family of polarized manifolds is a proper morphism $f :\, \X\to S$ between complex analytic spaces with the following properties
\begin{itemize}
\item[(1)] the complex analytic spaces $\X$ and $S$ are smooth and connected, and the morphism $f$ is non-degenerate, i.e. the tangent map $df$ is of maximal rank at each point of $\X$;
\item[(2)] there is a line bundle $\mathcal{L}$ over $\X$;
\item[(3)] $(X_s=f^{-1}(s),L_s=\mathcal{L}|_{X_s})$ is a polarized manifold for any $s\in S$.
\end{itemize}

Let ${\tilde{S}}$ be the universal cover of $S$. Then we have an induced family $f_T:\, \X_T \to{\tilde{S}}$ of polarized manifolds with an induced line bundle $\mathcal{L}_T$ on $\X_T$. 

Fixing a base point
$s_o\in S$ and $H_\mathbb{Z}=\H^n(X_{s_o},\mathbb{Z})/\mathrm{Tor}$, the period map is defined as a morphism $$\Phi :\, S \to \Gamma \backslash D$$ by
\begin{equation}\label{perioddefinition}
s \mapsto \tau^{[\gamma_s]}(F^n_s\subseteq \cdots\subseteq F^0_s)\in D,
\end{equation}
where $F_s^k=F^k \H^n(X_s,\mathbb{C})$ and $\tau^{[\gamma_s]}$ is an isomorphism between $\C-$vector spaces
$$\tau^{[\gamma_s]}:\, \H^n(X_s,\mathbb{C})\to H\simeq \H^n(X_{s_o},\mathbb{C}),$$
which depends only on the homotopy class $[\gamma_s]$ of the curve $\gamma_s$ between $s$ and $s_o$.

The period map from $S$ is well-defined with respect to the monodromy representation
$$\rho :\,  \pi_1(S,s_o)\to \Gamma \subseteq \text{Aut}(H_{\mathbb{Z}},Q),\, [\gamma]\mapsto \tau^{[\gamma]}.$$
It is well-known that the period map has the following properties (c.f. \cite{Griffiths1, Griffiths2}):
\begin{enumerate}
\item locally liftable;
\item holomorphic, i.e. $\partial F^i_z/\partial \bar{z}\subseteq  F^i_z$, $0\le i\le n$;
\item Griffiths transversality: $\partial F^i_z/\partial z\subseteq  F^{i-1}_z$, $1\le i\le n$.
\end{enumerate}
\begin{remark}
In this paper, we will also consider the period map from the base $S$ of analytic family of compact K\"ahler manifolds to the period domain $D$ of Hodge structures (not necessarily polarized) on $\H^n(X_s,\mathbb{C})$, $s\in S$. From the standard results of K\"ahler geometry, e.g. Section 2 in \cite{schmid1}, this period map is determined by the period maps to the period domains of polarized Hodge structures introduced as above.
\end{remark}

By passing to a finite-index, torsion-free subgroup of $\Gamma$, we may assume that $\Gamma$ itself is torsion-free, ensuring that $\Gamma \backslash D$ is smooth. Consequently, we can work on a finite cover of $S$ without loss of generality. For a standard construction of this type, we refer the reader to the proof of Lemma IV-A on pages 705--706 of \cite{Sommese}.

Since period map is locally liftable, we can lift the period map to $\P : {\tilde{S}} \to D$ by taking the universal cover ${\tilde{S}}$ of $S$ such that the diagram
\begin{equation*}
\xymatrix{
{\tilde{S}} \ar[r]^-{\P} \ar[d]^-{\pi} & D\ar[d]^-{\pi}\\
S \ar[r]^-{\Phi} & \Gamma \backslash D
}
\end{equation*}
is commutative.

Fix a base point $z_o\in \tilde S$ such that $\pi(z_o)=s_o$ is the base point in $S$.
For any $z=(X_z,L_z)\in{\tilde{S}}$, 
We denote 
the corresponding Hodge decomposition and Hodge filtration respectively by
\begin{eqnarray}
  &&\H^n(X_z, \mathbb{C})=\bigoplus_{p+q=n}  H_{\mathrm{pr}}^{p, q}(X_z) \label{givenHdgD}, \\
  &&\H^n(X _z, {\mathbb{C}})=F_z^{0}\supset F_z^{1}\supset\cdots \supset F_z^{n}, \label{givenHdgF}
\end{eqnarray}
where $$F_z^{p}= F^{p} \H^n(X _z, {\mathbb{C}})= H_{\mathrm{pr}}^{n,0}(X_z) \oplus \cdots \oplus H_{\mathrm{pr}}^{p,n-p}(X_z)$$ for $0\le p\le n$.

In this paper, all the vectors are taken as column vectors, since we will consider the left actions of $G_\C$ on the period domains. To simplify notations, we also write row of vectors  and use the transpose sign $T$ to denote the corresponding column.

Let us introduce the notion of adapted basis of the given Hodge decomposition or Hodge filtration.
We call a basis $$\xi=\left\{ \xi_0, \cdots, \xi_{f^{n}-1},\xi_{f^{n}}, \cdots ,\xi_{f^{n-1}-1}, \cdots, \xi_{f^{k+1}}, \cdots, \xi_{f^k-1}, \cdots, \xi_{f^{1}},\cdots , \xi_{f^{0}-1} \right\}^T$$ of $\H^n(X_z, \mathbb{C})$ an adapted basis
 for the given Hodge decomposition \eqref{givenHdgD}
  if it satisfies
 $$H^{k, n-k}(X_z)=\text{Span}_{\mathbb{C}}\left\{\xi_{f^{k+1}}, \cdots, \xi_{f^k-1}\right\},\, 0\le k\le n.$$
We call a basis
\begin{align*}
\zeta=\left\{ \zeta_0, \cdots, \zeta_{f^{n}-1},\zeta_{f^{n}}, \cdots ,\zeta_{f^{n-1}-1}, \cdots, \zeta_{f^{k+1}}, \cdots, \zeta_{f^k-1}, \cdots,\zeta_{f^{1}},\cdots , \zeta_{f^{0}-1} \right\}^T
\end{align*}
of $\H^n(X_z, {\mathbb{C}})$ an adapted basis of the given filtration \eqref{givenHdgF}
if it satisfies $$F^{k}_z=\text{Span}_{\mathbb{C}}\{\zeta_0, \cdots, \zeta_{f^k-1}\},\, 0\le k\le n.$$
For convenience, we set $f^{n+1}=0$ and $m=f^0$.

\begin{remark}
The adapted basis at the base point $z_0$ can be chosen either with respect to the given Hodge decomposition or the given Hodge filtration. Using methods from deformation theory, it is often convenient to select the adapted basis at $ z_0 $ in terms of harmonic forms. However, to ensure the holomorphicity of the period map, the adapted basis at any other point $ z $, given by the image of the period map, can only be chosen with respect to the Hodge filtration at $ z $.
\end{remark}

\begin{definition}\label{blocks}
(1) Let $$\xi=\left\{ \xi_0, \cdots, \xi_{f^{n}-1}, \cdots , \xi_{f^{k+1}}, \cdots, \xi_{f^k-1}, \cdots, \xi_{f^{1}},\cdots , \xi_{f^{0}-1} \right\}^T$$ 
be the adapted basis with respect to the Hodge decomposition or the Hodge filtration at any point.
The blocks of $\xi$ are defined by
$$\xi_{(\alpha)}=\{\xi_{f^{-\alpha +n+1}}, \cdots, \xi_{f^{-\alpha +n}-1}\}^T,$$
for $0\le \alpha \le n$. Then 
$$\xi=\{\xi_{(0)}^T,\cdots, \xi_{(n)}^T\}^T=\left(\begin{array}[c]{ccc}\xi_{(0)}\\\vdots\\ \xi_{(n)}\end{array}\right).$$

(2) The blocks of an $m\times m$ matrix $\Psi=(\Psi_{ij})_{0\le i,j\le m-1}$ are set as follows. For each
$0\leq \alpha, \beta\leq n$, the $(\alpha, \beta)$-th block
$\Psi^{(\alpha, \beta)}$ is defined by
\begin{align}\label{block}
\Psi^{(\alpha, \beta)}=\left(\Psi_{ij}\right)_{f^{-\alpha+n+1}\leq i \leq f^{-\alpha+n}-1, \ f^{-\beta+n+1}\leq j\leq f^{-\beta+n}-1}.
\end{align}
In particular, $\Psi =(\Psi^{(\alpha,\beta)})_{0\le \alpha,\beta \le n}$ is called a {block upper (lower resp.) triangular matrix} if
$\Psi^{(\alpha,\beta)}=0$ whenever $\alpha>\beta$ ($\alpha<\beta$ resp.).
\end{definition}

Let $H_{\mathbb{F}}=\H^n(X, \mathbb{F})$, where $\mathbb{F}$ is $\mathbb{Z}$, $\mathbb{R}$ or $\mathbb{C}$. Then $H=H_{\mathbb{C}}$ under this notation. We define the complex Lie group
\begin{align*}
G_{\mathbb{C}}=\{ g\in GL(H_{\mathbb{C}})|~ Q(gu, gv)=Q(u, v) \text{ for all } u, v\in H_{\mathbb{C}}\},
\end{align*}
and the corresponding real Lie group 
\begin{align*}
G_{\mathbb{R}}=\{ g\in GL(H_{\mathbb{R}})|~ Q(gu, gv)=Q(u, v) \text{ for all } u, v\in H_{\mathbb{R}}\}.
\end{align*}
We also have $$G_{\mathbb Z}=\text{Aut}(H_{\mathbb{Z}},Q) =\{ g\in \text{Aut}(H_{\mathbb{Z}})|~ Q(gu, gv)=Q(u, v) \text{ for all } u, v\in H_{\mathbb{Z}}\}.$$

Recall that $Q$ is a non-degenerate bilinear form on $H_{\mathbb{Z}}$ which is symmetric if $n$ is even, and skew-symmetric if $n$ is odd. Hence by choosing the standard basis of $H$ with respect to $Q$, we can show that the complex Lie subgroup $G_\C$ is the classical group $SO(N,\C)$ when the weight $n$ is even and $Sp(N,\C)$ when the weight $n$ is odd. Please see Chapter 4.4 of \cite{CMP} for the details.

There is a left action of $G_\C$ on $\check{D}$ such that the period domain $D$ can be realized as the $G_{\mathbb{R}}$-orbit.
Griffiths in \cite{Griffiths1} showed that $G_{\mathbb{C}}$ acts on
$\check{D}$ transitively, so does $G_{\mathbb{R}}$ on $D$. The
stabilizer of $G_{\mathbb{C}}$ on $\check{D}$ at the base point $$o=\left(H=\bigoplus_{p+q=n}H^{p,q}_o=F_o^0\supset \cdots \supset F^n_o\right)\in D$$
is a parabolic subgroup
$$B=\{g\in G_\C| gF_o^k=F_o^k,\ 0\le k\le n\},$$
and the stabilizer of $G_{\mathbb{R}}$ on $D$ is a compact real subgroup $V=B\cap G_\mathbb{R}$.
Thus we can realize $\check{D}$ and $D$ as
$$\check{D}=G_\C/B,\text{ and }D=G_\mathbb{R}/V,$$
so that $\check{D}$ is a flag manifold and $D\subseteq
\check{D}$ is an open complex submanifold, whihc is called flag domain.

The Lie algebra $\mathfrak{g}$ of the complex Lie group $G_{\mathbb{C}}$ is
\begin{align*}
\mathfrak{g}&=\{X\in \text{End}(H_\mathbb{C})|~ Q(Xu, v)+Q(u, Xv)=0, \text{ for all } u, v\in H_\mathbb{C}\},
\end{align*}
and the real subalgebra
$$\mathfrak{g}_0=\{X\in \mathfrak{g}|~ XH_{\mathbb{R}}\subseteq H_\mathbb{R}\}$$
is the Lie algebra of $G_\mathbb{R}$. Note that $\mathfrak{g}$ is a
simple complex Lie algebra and contains $\mathfrak{g}_0$ as a real
form,  i.e. $\mathfrak{g}=\mathfrak{g}_0\oplus \i\mathfrak{g}_0$.

On the vector space $\text{Hom}(H_\C,H_\C)$ we can give a Hodge structure of weight zero by
\begin{align*}
\mathfrak{g}=\bigoplus_{k\in \mathbb{Z}} \mathfrak{g}^{k,
-k},\quad\mathfrak{g}^{k, -k}= \{X\in
\mathfrak{g}:\,XH^{p,q}_o\subseteq H^{p+k, q-k}_o,\ \forall \, p+q=n
\}.
\end{align*}

By the definition of $B$, the Lie algebra $\mathfrak{b}$ of $B$ has
the  form $\mathfrak{b}=\bigoplus_{k\geq 0} \mathfrak{g}^{k, -k}$.
Then the Lie algebra $\mathfrak{v}_0$ of $V$ is
$$\mathfrak{v}_0=\mathfrak{g}_0\cap \mathfrak{b}=\mathfrak{g}_0\cap \mathfrak{b} \cap\bar{\mathfrak{b}}=\mathfrak{g}_0\cap \mathfrak{g}^{0, 0}.$$
With the above isomorphisms, the holomorphic tangent space of $\check{D}$ at the base point is naturally isomorphic to $\mathfrak{g}/\mathfrak{b}$.

Let us consider the nilpotent Lie subalgebra
$$\mathfrak{n}_-:=\bigoplus_{k\geq 1}\mathfrak{g}^{-k,k}.$$  Then one
has the isomorphism $\mathfrak{g}/\mathfrak{b}\cong
\mathfrak{n}_-$. We denote the corresponding unipotent Lie group to be
$$N_-=\exp(\mathfrak{n}_-).$$

\begin{remark}
We remark that the elements in $N_-$ can be realized as nonsingular block upper triangular matrices with identity blocks in the diagonal; the elements in $B$ can be realized as nonsingular block lower triangular matrices.
If $c, c'\in N_-$ such that $cB=c'B$ in $\check{D}$, then $$c'^{-1}c\in N_-\cap B=\{I \},$$ i.e. $c=c'$. This means that the matrix representation in $N_-$ of the unipotent orbit $N_-(o)$ is unique. Therefore with the fixed base point $o\in \check{D}$, we can identify $N_-$ with its unipotent orbit $N_-(o)$ in $\check{D}$ by identifying an element $c\in N_-$ with $[c]=cB$ in $\check{D}$. Therefore our notation $N_-\subseteq\check{D}$ is well-defined. In particular, when the base point $o$ is in $D$, we have $N_-\cap D\subseteq D$ is non-empty.

\end{remark}

As $\text{Ad}(g)(\mathfrak{g}^{k, -k})$ is in  $\bigoplus_{i\geq
k}\mathfrak{g}^{i, -i} \text{ for each } g\in B,$ the subspace
$\mathfrak{b}\oplus \mathfrak{g}^{-1, 1}/\mathfrak{b}\subseteq
\mathfrak{g}/\mathfrak{b}$ defines an Ad$(B)$-invariant subspace. By
left translation via $G_{\mathbb{C}}$,
$\mathfrak{b}\oplus\mathfrak{g}^{-1,1}/\mathfrak{b}$ gives rise to a
$G_{\mathbb{C}}$-invariant holomorphic subbundle of the holomorphic
tangent bundle, which will be denoted by $\mathrm{T}^{1,0}_{h}\check{D}$
and called as the horizontal tangent subbundle. One can
check that this construction does not depend on the choice of the
base point.

The horizontal tangent subbundle, restricted to $D$, determines a subbundle $\mathrm{T}_{h}^{1, 0}D$ of the holomorphic tangent bundle $\mathrm{T}^{1, 0}D$ of $D$.
The $G_{\mathbb{C}}$-invariance of $\mathrm{T}^{1, 0}_{h}\check{D}$ implies the $G_{\mathbb{R}}$-invariance of $\mathrm{T}^{1, 0}_{h}D$. Note that the horizontal
tangent subbundle $\mathrm{T}_{h}^{1, 0}D$ can also be constructed as the associated bundle of the principle bundle $V\to G_\mathbb{R} \to D$ with the adjoint
representation of $V$ on the space $\mathfrak{g}^{-1,1}$.

\begin{remark}\label{globally horizontal}
The $G_{\mathbb{C}}$-invariance of $\mathrm{T}^{1,0}_{h}\check{D}$ is of great importance to the study the Griffiths transversality in $N_-$. 
We also note that the subspace $$ \mathfrak{p}_{\mathfrak b}:\,=\left( \mathfrak{b}\bigoplus \bigoplus_{k>0, \, k \text{ odd}}\mathfrak{g}^{-k, k}\right)\Bigg/\mathfrak{b}\subseteq
\mathfrak{g}/\mathfrak{b}$$
shares many similar properties to the subspace $\mathfrak{b}\oplus \mathfrak{g}^{-1, 1}/\mathfrak{b}$ such as the curvature property. 
But $\mathfrak{p}_{\mathfrak b}$ is not preserved by the adjoint action of the parabolic subgroup $B$, and hence can not be defined globally on $\check D$ as $G_\C$-invariant subbundle.
\end{remark}

Let $$\mathcal{F}^{n}\subset \cdots\subset\mathcal{F}^{k}\subset \cdots \subset \mathcal{F}^{0}=\mathcal H=\check D \times H$$ be the  Hodge bundles on $\check D$
with fibers $\mathcal{F}^{k}|_{x}=F_{x}^{k}$ for any $x=(F_x^n\subset \cdots \subset F^0_x)\in \check D$, $0\le k\le n$. As another
interpretation of the horizontal bundle in terms of the Hodge
bundles $\mathcal{F}^{k}\to \check{D}$, $0\le k \le n$, one has
\begin{align}\label{horizontal}
\mathrm{T}^{1, 0}_{h}\check{D}\simeq \mathrm{T}^{1, 0}\check{D}\cap \bigoplus_{k=1}^{n}\text{Hom}(\mathcal{F}^{k}/\mathcal{F}^{k+1}, \mathcal{F}^{k-1}/\mathcal{F}^{k}).
\end{align}

The period map $\Phi:\, {\tilde{S}}\to D$ induces the pull-back vector bundles 
$\Phi^*\mathcal{F}^{k}$, $0\le k \le n$, which is still denoted by $\mathcal{F}^{k}$ and called Hodge bundles on ${\tilde{S}}$, such that there exists a flat connection $\nabla$ on $\mathcal F^0=\mathcal H$, called the Gauss-Mannin connection, satisfying the Griffiths transversality that
\begin{equation}\label{GM GT}
  \nabla:\, \mathcal F^k\to \Omega_{{\tilde{S}}}^1\otimes \mathcal F^{k-1}.
\end{equation}
In fact a period map $\Phi:\,{\tilde{S}}\to D$ is equivalent to the existence of the Hodge bundles on $S$ satisfying the above conditions.

Let
$$\Phi:\, {\tilde{S}}\to D$$
be the lifted period map from the universal cover ${\tilde{S}}$ of $S$.
Now we define
\begin{align*}
{\tilde{S}}^{\vee}=\P^{-1}(N_-\cap D).
\end{align*}
We first prove that ${\tilde{S}}\backslash{\tilde{S}}^{\vee}$ is an analytic subvariety of ${\tilde{S}}$ with $\text{codim}_{\mathbb{C}}({\tilde{S}}\backslash{\tilde{S}}^{\vee})\geq 1$.

\begin{lemma}\label{transversal}Let $z_o\in{\tilde{S}}$ be the base point with $\P(z_o)=\{F^n_{z_o}\subseteq F^{n-1}_{z_o}\subseteq \cdots\subseteq F^0_{z_o}\}.$ Let $z\in {\tilde{S}}$ be any point with $\P(z)=\{F^n_{z}\subseteq F^{n-1}_{z}\subseteq \cdots\subseteq F^0_{z}\}$, then $\P(z)\in N_-$ if and only if $F^{k}_{z}$ is isomorphic to $F^k_{z_o}$ for all $0\leq k\leq n$.
\end{lemma}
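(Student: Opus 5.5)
The plan is to turn the condition $\P(z)\in N_-$ into a block LU–decomposition problem for a matrix representing $\P(z)$, and then read off that this decomposition exists exactly when certain leading principal blocks are invertible. First I make the statement precise. Let $\eta=\{\eta_{(0)}^T,\cdots,\eta_{(n)}^T\}^T$ be an adapted basis of the Hodge decomposition at $z_o$, so that $F^{k}_{z_o}=\mathrm{Span}\{\eta_{(0)},\cdots,\eta_{(n-k)}\}$. Set $W^{k}=\mathrm{Span}\{\eta_{(n-k+1)},\cdots,\eta_{(n)}\}$, so that $H=F^k_{z_o}\oplus W^k$. I read ``$F^k_z$ is isomorphic to $F^k_{z_o}$'' as saying that the projection $F^k_z\to F^k_{z_o}$ along $W^k$ is an isomorphism, i.e.\ $F^k_z\cap W^k=0$. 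This is the natural meaning in the paper's setting.

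Next I write $\P(z)$ as a matrix. Choose an adapted basis $\zeta=A\eta$ of the filtration at $z$, with $A\in GL(H)$ partitioned into blocks $A^{(\alpha,\beta)}$ as in Definition~\ref{blocks}. Since $G_\C$ acts transitively, we may take $A$ to be (the matrix of) an element $g\in G_\C$ with $\P(z)=gB$. Under the conventions of the paper, the rows of $A$ lying in the first $n-k+1$ block rows span $F^k_z$. Left multiplication by a block lower triangular matrix preserves the span of the first $f^k$ rows for every $k$, and this is exactly the stabilizer $B$. Then $\P(z)\in N_-$ means there are $b\in B$ and $c\in N_-$ with $A=b\,c$, where $c$ is block upper triangular with identity diagonal blocks. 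The projection $F^k_z\to F^k_{z_o}$ along $W^k$ has matrix equal to the leading principal block $A_k:=(A^{(\alpha,\beta)})_{0\le\alpha,\beta\le n-k}$. So the lemma reduces to the following claim: $A$ admits such a factorization if and only if every $A_k$, $0\le k\le n$, is nonsingular.

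For the direction ``only if'', if $A=bc$ then $A_k=b_k c_k$, because $b$ is block lower triangular and $c$ is block upper triangular. Here $b_k$ is invertible and $c_k$ is unipotent, so $A_k$ is invertible. For the direction ``if'', I run block Gaussian elimination by induction on the block size. Invertibility of the leading blocks lets one produce a block lower triangular $b$ and a block upper unipotent $c$ in $GL(H)$ with $A=bc$. This is the standard block LU theorem, and the factorization is unique.

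The one step needing care, which I expect to be the main point, is checking that $c$ really lies in $N_-=\exp(\mathfrak n_-)$ and not merely in $GL(H)$. Write $b$ for the matrix representing $\P(z)$ modulo $B$; I must show $b\in G_\C$ (hence $b\in B$) and $c\in G_\C$. For this I would use the symmetry of $Q$ under the Hodge–Riemann relation \eqref{HR1}. Consider the involution $X\mapsto Q^{-1}(X^T)^{-1}Q$, which fixes $G_\C$. Because $Q$ pairs $H^{p,q}_o$ with $H^{q,p}_o$, this involution sends block lower triangular matrices to block lower triangular ones and block upper unipotent matrices to block upper unipotent ones. Uniqueness of the LU factorization of $A=g\in G_\C$ then forces both $b$ and $c$ to be fixed by the involution, so both lie in $G_\C$. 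Finally, $c$ is unipotent, so $\log c$ is a nilpotent element of $\mathfrak g$ that is strictly block upper triangular. Such an element lies in $\bigoplus_{k\ge1}\mathfrak g^{-k,k}=\mathfrak n_-$, and therefore $c\in N_-$ and $\P(z)=cB\in N_-$.
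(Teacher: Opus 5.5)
Your proposal is correct and is essentially the paper's argument: represent $\Phi(z)$ by the transition matrix $A$ from $\eta$ to an adapted basis at $z$, reduce $\Phi(z)\in N_-$ to a block LU factorization of $A$, and characterize that factorization by invertibility of the leading principal blocks, which are exactly the matrices of the projections $F^k_z\to F^k_{z_o}$. The paper dismisses the factorization step as ``basic linear algebra''; your extra check that the factors actually lie in $G_\C$, via the $Q$-involution and uniqueness of the factorization, fills in a point the paper leaves implicit, namely that the unipotent factor is in $N_-$ and not merely in $GL(H)$.
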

\begin{proof}
We fix $\eta= \{\eta_{(0)}^T, \cdots, \eta_{(n)}^T\}^T$ as the adapted basis of the Hodge filtration $\{F^{n}_{z_o} \subseteq F^{n-1}_{z_o}\subseteq \cdots\subseteq F^0_{z_o}\}$ at the base point $z_o$.
For any $z\in {\tilde{S}}$, we choose an arbitrary adapted basis $\zeta(z)=\{\zeta_{(0)}^T(z), \cdots, \zeta_{(n)}^T(z)\}^T$ for the given Hodge filtration $\{F^{n}_{z}\subseteq F^{n-1}_{z}\subseteq\cdots\subseteq F^0_{z}\}$.  Let $$A(z)=(A^{(\alpha, \beta)}(z))_{0\le \alpha, \beta \le n}$$ be the transition matrix between the basis $\zeta(z)$ and the basis $\eta$ for the same vector space $H_\C$, where $A^{(\alpha, \beta)}(z)$ are the corresponding blocks.
Then $$ \zeta(z)=A(z) \cdot \eta.$$ 
Therefore $$\P(z)\in N_-=N_-B/B\subseteq \check{D}$$ if and only if its matrix representation $A(z)$ can be decomposed as $L(z) U(z)$, where $L(z)\in N_-$ is a nonsingular block upper triangular matrix with identities in the diagonal blocks, and $U(z)\in B$ is a nonsingular block lower triangular matrix.

By basic linear algebra, we know that $(A^{(\alpha, \beta)}(z))$ has such decomposition if and only if $$\det(A^{(\alpha, \beta)}(z))_{0\leq \alpha, \beta \leq k}\neq 0$$ for any $0\leq k\leq n$. In particular, we know that $(A^{(\alpha, \beta)}(z))_{0\leq \alpha, \beta\leq k}$ is the transition map between the basis of $F^k_{z_o}$ and that  of $F^k_{z}$. Therefore, $$\det((A^{(\alpha, \beta)}(z))_{0\leq \alpha, \beta\leq k})\neq 0$$ if and only if $F^k_{z}$ is isomorphic to $F^k_{z_o}$.
\end{proof}

\begin{proposition}\label{codimension}
The subset ${\tilde{S}}^{\vee}$ is an open complex submanifold in ${\tilde{S}}$, and ${\tilde{S}}\backslash {\tilde{S}}^{\vee}$ is an analytic subvariety of ${\tilde{S}}$ with $\text{codim}_{\mathbb{C}}({\tilde{S}}\backslash {\tilde{S}}^{\vee})\geq 1$.
\end{proposition}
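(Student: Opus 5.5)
Using Lemma \ref{transversal}, I will describe ${\tilde{S}}\setminus{\tilde{S}}^{\vee}$ as the common zero locus pattern of finitely many determinants of holomorphic matrices. Since $\P(z)\in D$ for every $z\in{\tilde{S}}$, we have ${\tilde{S}}^{\vee}=\{z\in{\tilde{S}}:\ \P(z)\in N_-\}$. By the proof of Lemma \ref{transversal}, this holds exactly when all the leading block minors $\det\big(A^{(\alpha,\beta)}(z)\big)_{0\le\alpha,\beta\le k}$, $0\le k\le n$, are nonzero. Here $A(z)$ is the transition matrix from the fixed base basis $\eta$ to an adapted basis $\zeta(z)$ of the filtration at $z$.

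The first step is to make this description local and holomorphic. Around any point $z_1\in{\tilde{S}}$, the Hodge bundles $\mathcal F^k$ are holomorphic subbundles of the trivial bundle $H\times{\tilde{S}}$, since the period map is holomorphic. So on a small neighborhood $U$ of $z_1$ we can choose holomorphic frames $\zeta_{(0)}(z),\dots,\zeta_{(n)}(z)$ such that $\{\zeta_{(0)},\dots,\zeta_{(k)}\}$ frames $\mathcal F^{n-k}$ for each $k$. Then $A(z)$ is holomorphic on $U$, and so is each function $d_k(z)=\det(A^{(\alpha,\beta)}(z))_{0\le\alpha,\beta\le k}$. The condition $d_k(z)\neq 0$ does not depend on the choice of adapted frame. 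Indeed, changing frames multiplies $A$ on the left by a block lower triangular invertible matrix, which multiplies each leading block minor by a nonvanishing factor. Equivalently, $d_k(z)\ne0$ says that the composition $F^{n-k}_z\hookrightarrow H\to H/\big(\text{span of }\eta_{(k+1)},\dots,\eta_{(n)}\big)$ is an isomorphism, which is intrinsic. Hence
$$({\tilde{S}}\setminus{\tilde{S}}^{\vee})\cap U=\{z\in U:\ d_0(z)\,d_1(z)\cdots d_n(z)=0\},$$
and the sets glue to a global analytic subvariety. It is closed, so ${\tilde{S}}^{\vee}$ is open, and as an open subset of a complex manifold it is an open complex submanifold.

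For the codimension statement, each local piece is the zero set of the single holomorphic function $d=d_0\cdots d_n$. Such a zero set is either a hypersurface (codimension $1$) or all of $U$ if $d\equiv0$ on $U$. To rule out the second case, I use that ${\tilde{S}}$ is connected and $z_o\in{\tilde{S}}^{\vee}$, since $\P(z_o)=o$ corresponds to the identity in $N_-$. Suppose $d\equiv0$ on some nonempty open set. The set of points near which $d$ vanishes identically is open. It is also closed: the local functions $d$ for different adapted frames differ by nowhere-vanishing holomorphic factors, so vanishing identically near a point is frame-independent, and the identity theorem applies on overlaps. By connectedness this set would then be all of ${\tilde{S}}$, contradicting $d(z_o)\neq0$. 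Hence ${\tilde{S}}\setminus{\tilde{S}}^{\vee}$ is a proper analytic subvariety, nowhere dense, with $\operatorname{codim}_\C\ge1$.

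The main obstacle is the frame-independence step: checking that the vanishing of the $d_k$ is well defined under holomorphic changes of adapted frame. This is needed both for gluing the local descriptions and for running the identity-theorem argument globally. I would handle it through the intrinsic formulation above, as the degeneracy locus of the bundle maps $\mathcal F^{n-k}\to (H/\mathrm{span}\{\eta_{(k+1)},\dots,\eta_{(n)}\})\times{\tilde{S}}$ between holomorphic vector bundles of equal rank. Such degeneracy loci are always analytic, cut out locally by one determinant.
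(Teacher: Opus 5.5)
Your proof is correct and is essentially the paper's: both realize ${\tilde{S}}\setminus{\tilde{S}}^{\vee}$ as the vanishing locus of the leading block minors from Lemma \ref{transversal}, and both get the codimension bound from connectedness of ${\tilde{S}}$ together with $z_o\in{\tilde{S}}^{\vee}$. The paper instead cuts out $\check{D}\setminus N_-$ in $\check{D}$ and pulls it back by the holomorphic map $\P$, and it cites irreducibility of a connected manifold where you run the identity-theorem argument; it also leaves implicit the frame-independence and the transversality reading of the minors, which you verify explicitly.
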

\begin{proof}
From Lemma \ref{transversal}, one can see that $\check{D}\setminus N_-\subseteq \check{D}$ is defined as an analytic subvariety by the equations
\begin{align*}
\{z\in \check{D} : \det ((A^{(\alpha, \beta)}(z))_{0\leq \alpha, \beta\leq k})=0\text{ for some } 0\leq k\leq n\}.
\end{align*}
Therefore $N_-$ is dense in $\check{D}$, and that $\check{D}\setminus N_-$ is an analytic subvariety,
which is closed in $\check{D}$ and with $\text{codim}_{\mathbb{C}}(\check{D}\backslash N_-)\geq 1$.

We consider the period map  $\P:\,{\tilde{S}}\to \check{D}$ as a holomorphic map to $\check{D}$, then $${\tilde{S}}\setminus
{\tilde{S}}^{\vee}=\P^{-1}(\check{D}\setminus N_-)$$ is the preimage of $\check{D}\setminus N_-$ of the holomorphic map $\P$. Therefore ${\tilde{S}}\setminus
{\tilde{S}}^{\vee}$ is also an analytic subvariety and a closed set in ${\tilde{S}}$. Because ${\tilde{S}}$ is smooth and connected, ${\tilde{S}}$ is
irreducible. If $\dim_\C({\tilde{S}}\setminus {\tilde{S}}^{\vee})=\dim_\C{\tilde{S}}$, then ${\tilde{S}}\setminus {\tilde{S}}^{\vee}={\tilde{S}}$ and
${\tilde{S}}^{\vee}=\emptyset$, but this contradicts to the fact that the base point $z_o$ is in ${\tilde{S}}^{\vee}$. Thus we conclude that
$$\dim_\C({\tilde{S}}\setminus {\tilde{S}}^{\vee})<\dim_\C{\tilde{S}},$$and consequently $\text{codim}_{\mathbb{C}}({\tilde{S}}\backslash {\tilde{S}}^{\vee})\geq
1$.
\end{proof}


\section{Sections of Hodge bundles from Hodge theory}\label{basic lemma}
In this section we give the sections of Hodge bundle on ${\tilde{S}}^{\vee}$ using the matrices representations of the image of the period map in $N_-$.

Let $\P:\, {\tilde{S}}^{\vee} \to N_-\cap D$ be the restricted period map as defined in Section \ref{Lie}. In Definition \ref{blocks}, we have introduced the blocks of the adapted basis $\eta$  of the Hodge decomposition at the base point $t_o\in \tilde{S}^\vee$ as
$$\eta=\{\eta_{(0)}^T,\eta_{(1)}^T,\cdots,\eta_{(n)}^T\}^T,$$
where 
$$\eta_{(p)}=\{\eta_{f^{-p +n+1}}, \cdots, \eta_{f^{-p +n}-1}\}^T$$ 
is the basis of $H^{n-p,p}(X_{t_o})$ for $0\le p \le n$.

For any $t\in {\tilde{S}}^{\vee}$, we can choose the matrix representation of the image $\P(t)$ in $N_-$ by 
$$\P(t)=\left(\Phi^{(p,q)}(t)\right)_{0\le p,q\le n}\in N_-\cap D.$$
Then, by Definition \ref{blocks}, the matrix $\P(t)$ is a block upper triangular matrix of the form, 
\begin{align}\label{block of n+}
\P(t)=\left(\begin{array}[c]{ccccccc}
I & \Phi^{(0,1)}(t) & \Phi^{(0,2)}(t) & \cdots& \Phi^{(0,n-1)}(t) & \Phi^{(0,n)}(t)\\ 
O& I& \Phi^{(1,2)}(t)&\cdots &\Phi^{(1,n-1)}(t)& \Phi^{(1,n)}(t)\\ 
O& O&I &\cdots &\Phi^{(2,n-1)}(t)&\Phi^{(2,n)}(t) \\ 
\vdots &\vdots & \vdots& \ddots& \vdots&\vdots \\ 
O&O &O &\cdots & I & \Phi^{(n-1,n)}(t)\\
 O&O &O &\cdots &  O&I
\end{array}\right),
\end{align}
where, in the above notations, $O$ denotes zero block matrix and $I$ denotes identity block matrix. Using the matrix representation, we have the adapted basis 
$$\Omega(t)=\{\Omega_{(0)}(t)^T,\Omega_{(1)}(t)^T,\cdots,\Omega_{(n)}(t)^T\}^T,$$
of the Hodge filtration at $t\in {\tilde{S}}^{\vee}$ as
\begin{eqnarray}\label{lm section}
\Omega_{(p)}(t)&=&\sum_{i=0}^{n-p}\Phi^{(p,p+i)}(t)\cdot \eta_{(p+i)} \nonumber \\
&=&\eta_{(p)}+\sum_{i=1}^{n-p} \Phi^{(p,p+i)}(t)\cdot \eta_{(p+i)}.
\end{eqnarray}
where $\Omega_{(p)}(t)$, together with $\Omega_{(0)}(t),\cdots, \Omega_{(p-1)}(t)$,
gives a basis of the Hodge filtration 
$$F^{n-p}_t = F^{n-p} H_{\mathrm{pr}}^n(X_t,\C).$$

Note that, in equation \eqref{lm section}, the term $\Phi^{(p,p+i)}(t)\cdot \eta_{(p+i)}$ is an element in $H_{\mathrm{pr}}^{n-p-i,p+i}(X_{t_o})$.

Moreover, since the period map is holomorphic, $\Omega_{(p)}(t)$ gives a holomorphic section of the Hodge bundle $\mathcal F^{n-p}$ on ${\tilde{S}}^{\vee}$, for $0\le p \le n$.


Let $\{U;t=(t_1,\cdots,t_N)\}$ be any local holomorphic coordinate on $\tilde{S}^\vee$.
In the following, the derivatives of the blocks,
$$
\frac{\partial \Phi^{(p,q)}}{\partial t_\mu}(t), \text{ for } 0\le p,q \le n,\ 1\le \mu\le N,
$$
will denote the blocks of derivatives of its entries,
$$
\frac{\partial \Phi^{(p,q)}}{\partial t_\mu}(t)=\left(\frac{\partial \Phi_{ij}}{\partial t_\mu}(t)\right)_{\substack{f^{-p+n+1}\leq i \leq f^{-p+n}-1\\ f^{-q+n+1}\leq j\leq f^{-q+n}-1}}.
$$

\begin{lemma}\label{lm derivative lemma}
Let the notations be as above. Then we have that 
\begin{align}\label{lm derivative}
\frac{\partial\Phi^{(p, p+i)}}{\partial t_\mu}(t)= \frac{\partial\Phi^{(p,p+1)}}{\partial t_\mu}(t) \cdot \Phi^{(p+1, p+i)}(t),
\end{align}
for any $0\le  p <p+i\le n$ and $1\le \mu\le N$.
\end{lemma}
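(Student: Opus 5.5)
Differentiating the basis \eqref{lm section} and applying Griffiths transversality \eqref{GM GT} reduces the statement to a coefficient comparison. Since $\eta$ is a fixed basis of $H$, the Gauss--Manin connection in this trivialization is ordinary differentiation, so
$$\frac{\partial \Omega_{(p)}}{\partial t_\mu}(t)=\sum_{i=1}^{n-p}\frac{\partial\Phi^{(p,p+i)}}{\partial t_\mu}(t)\cdot \eta_{(p+i)}.$$
Because $\Omega_{(p)}(t)$ is a holomorphic section of $\mathcal F^{n-p}$, transversality \eqref{GM GT} gives $\partial\Omega_{(p)}/\partial t_\mu(t)\in F^{n-p-1}_t$. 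By construction $F^{n-p-1}_t$ is spanned by the entries of $\Omega_{(0)}(t),\dots,\Omega_{(p+1)}(t)$. Hence there are matrices $C_0(t),\dots,C_{p+1}(t)$ with
$$\frac{\partial \Omega_{(p)}}{\partial t_\mu}(t)=\sum_{j=0}^{p+1}C_j(t)\,\Omega_{(j)}(t).$$

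Next we expand the right-hand side in the basis $\eta$, using $\Omega_{(j)}=\eta_{(j)}+\sum_{k\ge1}\Phi^{(j,j+k)}\eta_{(j+k)}$, and compare $\eta_{(q)}$-coefficients block by block. The left-hand side has no $\eta_{(q)}$-component for $q\le p$.

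The upper unitriangular structure makes this comparison inductive. Looking at $\eta_{(0)}$, only $\Omega_{(0)}$ contributes, with identity coefficient, so $C_0=0$. Proceeding by induction on $q\le p$, the $\eta_{(q)}$-coefficient is $C_q+\sum_{j<q}C_j\Phi^{(j,q)}=C_q$, which forces $C_q=0$. So only $C_{p+1}$ survives, and the $\eta_{(p+1)}$-coefficient yields $C_{p+1}=\partial\Phi^{(p,p+1)}/\partial t_\mu$.

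Finally, comparing the $\eta_{(p+i)}$-coefficients for $i\ge2$ gives
$$\frac{\partial\Phi^{(p,p+i)}}{\partial t_\mu}=C_{p+1}\Phi^{(p+1,p+i)}=\frac{\partial\Phi^{(p,p+1)}}{\partial t_\mu}\cdot\Phi^{(p+1,p+i)}.$$
For $i=1$ the identity \eqref{lm derivative} is trivial, since $\Phi^{(p+1,p+1)}=I$.

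The only point requiring care is justifying that the componentwise derivative of $\Omega_{(p)}$ is the Gauss--Manin derivative, so that transversality applies. This holds because $\eta$ is a flat frame of $\mathcal H=H\times\tilde S$; more precisely, $\eta$ is constant under the identification by the flat structure used to define the lift $\Phi$. The rest is linear algebra using the uniqueness of coefficients with respect to the basis $\eta$.
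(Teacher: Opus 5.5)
Your proposal is correct and follows essentially the same route as the paper. Griffiths transversality places $\partial\Omega_{(p)}/\partial t_\mu$ in $F^{n-p-1}_t$, which is spanned by $\Omega_{(0)}(t),\dots,\Omega_{(p+1)}(t)$; an inductive, block-triangular comparison of $\eta$-coefficients then kills the coefficients of $\Omega_{(0)},\dots,\Omega_{(p)}$ and identifies the remaining one with $\partial\Phi^{(p,p+1)}/\partial t_\mu$. Your remark that $\eta$ is a flat frame (so componentwise differentiation is the Gauss--Manin derivative) usefully justifies the step the paper cites from Griffiths' computation.
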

\begin{proof}
The main idea of the proof is to rewrite the Griffiths transversality in terms of the matrix representation of the image of the period map in $N_-\cap D$. The rest only uses basic linear algebra. 

Let us consider the adapted basis
$$\Omega(t)=\{\Omega_{(0)}(t)^T,\Omega_{(1)}(t)^T,\cdots,\Omega_{(n)}(t)^T\}^T,$$
of the Hodge filtration $F^{n-p}_t$ at $t\in {\tilde{S}}^{\vee}$ as given in \eqref{lm section}.

By Griffiths transversality,  especially the computations in Page 813 of \cite{Griffiths2}, we have that 
$$\frac{\partial \Omega_{(p)}}{\partial t_\mu}(t)$$
lies in $F_t^{n-p-1}$.
By the construction of the basis in \eqref{lm section}, we know that 
$F_t^{n-p-1}$ has basis 
$$\{\Omega_{(0)}(t)^T,\Omega_{(1)}(t)^T,\cdots,\Omega_{(p+1)}(t)^T\}^T,$$ which gives local holomorphic sections of the Hodge bundle $\mathcal{F}^{n-p-1}$. Therefore there exist matrices $A_{(0)}(t), A_{(1)}(t),\cdots, A_{(p+1)}(t)$ such that 
\begin{eqnarray}\label{1}
\frac{\partial \Omega_{(p)}}{\partial t_\mu}(t)&=&A_{(0)}(t) \cdot\Omega_{(0)}(t)+\cdots +A_{(p+1)}(t)\cdot\Omega_{(p+1)}(t) \nonumber \\
&\equiv&A_{(0)}(t)\cdot \Omega_{(0)}(t)+\cdots +A_{(p)}(t)\cdot \Omega_{(p)}(t) \text{ mod } \bigoplus_{\gamma\ge 1}H_\mathrm{pr}^{n-p-\gamma,p+\gamma}(X_{t_o})\\
&\equiv&A_{(0)}(t) \cdot \eta_{(0)}+ \left[  A_{(0)}(t)\Phi^{(0,1)}(t) +A_{(1)}(t)\right]\cdot\eta_{(1)} + \cdots \label{1'}\\
&&+\left[ A_{(0)}(t) \Phi^{(0, p)}(t)  +\cdots +A_{(p-1)}(t)\Phi^{(p-1,p)}(t) +A_{(p)}(t)\right]\cdot\eta_{(p)}   \nonumber \\
&& \text{ mod } \bigoplus_{\gamma\ge 1}H_\mathrm{pr}^{n-p-\gamma,p+\gamma}(X_{t_o}),\nonumber
\end{eqnarray}
where equations \eqref{1} and \eqref{1'} are deduced by comparing the types in \eqref{lm section} and the blocks in \eqref{block of n+}. Here we mod out certain Hodge types at the base point $t_o$.

Again, by looking at \eqref{lm section}, we have that 
\begin{eqnarray}\label{2}
\frac{\partial \Omega_{(p)}}{\partial t_\mu}(t)&=&\sum_{i\ge 1} \frac{\partial \Phi^{(p, p+i)}}{\partial t_\mu}(t)\cdot \eta_{(p+i)}\nonumber \\
&=& \frac{\partial\Phi^{(p,p+1)}}{\partial t_\mu}(t)\cdot \eta_{(p+1)}+\sum_{i\ge 2} \frac{\partial \Phi^{(p, p+i)}}{\partial t_\mu}(t) \cdot\eta_{(p+i)}\nonumber\\
&\equiv&0 \text{ mod } \bigoplus_{\gamma\ge 1}H_\mathrm{pr}^{n-p-\gamma,p+\gamma}(X_{t_o}).
\end{eqnarray}

By comparing equations \eqref{1'} and \eqref{2}, we see that 
\begin{eqnarray*}
\left\{ \begin{array}{lr} A_{(0)}(t) =0\\
A_{(0)}(t)\Phi^{(0,1)}(t) +A_{(1)}(t) = 0\\
\dots \dots\\
A_{(0)}(t) \Phi^{(0, p)}(t)  +\cdots +A_{(p-1)}(t)\Phi^{(p-1,p)}(t) +A_{(p)}(t)=0,
\end{array} \right.
\end{eqnarray*}
which implies inductively that $A_{(0)}(t)=O,\cdots, A_{(p)}(t)=O$. 
Therefore 
\begin{eqnarray*}
\frac{\partial \Omega_{(p)}}{\partial t_\mu}(t)&=&A_{(p+1)}(t)\Omega_{(p+1)}(t) \\
&=&A_{(p+1)}(t) \cdot\left(\eta_{(p+1)}+\sum_{i\ge 2}\Phi^{(p+1, p+i)}(t) \cdot\eta_{(p+i)}\right) \\
&=& A_{(p+1)}(t) \cdot \eta_{(p+1)} +\sum_{i\ge 2} A_{(p+1)}(t) \Phi^{(p+1, p+i)}(t) \cdot \eta_{(p+i)}.
\end{eqnarray*}

By comparing types again, we have that
\begin{align*}
A_{(p+1)}(t)= \frac{\partial\Phi^{(p,p+1)}}{\partial t_\mu}(t)
\end{align*}
and
\begin{eqnarray}
\frac{\partial\Phi^{(p, p+i)}}{\partial t_\mu}(t)&=&A_{(p+1)}(t) \cdot \Phi^{(p+1, p+i)}(t)  \label{lm derivative'}\\
&= &\frac{\partial\Phi^{(p,p+1)}}{\partial t_\mu}(t) \cdot \Phi^{(p+1, p+i)}(t),\nonumber 
\end{eqnarray}
for $i \ge 2$. Since $\Phi^{(p+1,p+1)}(t)$ is identity matrix,  \eqref{lm derivative'} holds  trivially for $i=1$. Therefore, we have proved equation \eqref{lm derivative}.
\end{proof}

\begin{remark}\label{trans remark}
Note that, by construction in \eqref{lm section},
each $\Omega_{(p)}(t)$ can be considered as a basis of $F_t^{n-p}/F_t^{n-p+1}$, $0\le p \le n$.

By the proof of Lemma \ref{lm derivative lemma}, we have that 
$$ (d\P)_{t}\left(\frac{\partial}{\partial t_{\mu}}\right)=\bigoplus_{0\le p\le n-1}\frac{\partial\Phi^{(p,p+1)}}{\partial t_\mu}(t)$$
as elements in 
$$\bigoplus_{0\le p\le n-1}\mathrm{Hom}(F_t^{n-p}/F_t^{n-p+1},F_t^{n-p-1}/F_t^{n-p})$$
which maps $\Omega_{(p)}(t)$ to $$A_{(p+1)}(t)\cdot \Omega_{(p+1)}(t)= \frac{\partial\Phi^{(p,p+1)}}{\partial t_\mu}(t)\cdot\Omega_{(p+1)}(t).$$
\end{remark}

\section{Sections of Hodge bundles from deformation theory}
\label{Section-extensionforms}
Recall that ${\tilde{S}}$ is a simply-connected complex manifold on which there is an analytic family $\tilde{\mathcal{X}} \to {\tilde{S}}$ of polarized manifolds with a line bundle $\tilde{\mathcal{L}}$ on $\tilde{\mathcal{X}}$.
In this section, we give the sections of Hodge bundles $\mathcal F^{\alpha}$, $0 \le \alpha \le n$, on ${\tilde{S}}$, utilizing the Beltrami differentials in deformation theory and the operators from the harmonic theory on compact K\"ahler manifolds.

In Section \ref{subsection-Beltrami} and Section \ref{section-Hodge}, we review the basics of Beltrami differentials and Harmonic theory on compact K\"ahler manifolds respectively.
Then in Section \ref{subsection-extension}, we present a closed formula for constructing extensions as $d$-closed forms of harmonic forms under global variation of complex structures. Finally, in Section \ref{gsec def}, we apply the closed formula to ${\tilde{S}}$ to get the global sections of Hodge bundles on ${\tilde{S}}$.

See \cite{LZ18} for further details on applications of the methods developed in this section.
 
\subsection{Beltrami differentials} \label{subsection-Beltrami}
\noindent

In this section, $X$ is a complex manifold with
$\dim_{\mathbb{C}}X=d$, and we denote by $M$ the underlying differential
manifold of $X$ of real dimension $2d$. The associated
complex structure of the complex manifold $X$ gives a direct sum
decomposition of the complexified differential tangent bundle $\mathrm{T}_{\mathbb{C}}M$,
\begin{align*}
\mathrm{T}_{\mathbb{C}}M=\Tan X\oplus \mathrm{T}^{0,1}X.
\end{align*}
Let $J$ be any almost complex structure on $M$. Then, $J$ gives
another direct sum decomposition,
\begin{align*}
\mathrm{T}_{\mathbb{C}}M=\mathrm{T}^{1,0}X_{J}\oplus \mathrm{T}^{0,1}X_J.
\end{align*}
Denote by
\begin{align*}
\iota_1: \mathrm{T}_{\mathbb{C}}M\rightarrow \mathrm{T}^{1,0}X, \  \iota_2:
\mathrm{T}_{\mathbb{C}}M\rightarrow \mathrm{T}^{0,1}X,
\end{align*}
the two projection maps onto the holomorphic and anti-holomorphic tangent bundles of the base complex manifold $X$.

\begin{definition} [cf. Definition 4.2 \cite{Kir} ]\label{finite dist}
Let $J$ be an almost complex structure on $M$, we say that $J$ is of
finite distance from the given complex structure $X$ on $M$, if the
restriction map
\begin{align*}
\iota_1|_{\mathrm{T}^{1,0}X_J}: \mathrm{T}^{1,0}X_J\rightarrow \mathrm{T}^{1,0}X
\end{align*}
is an isomorphism.
\end{definition}
Therefore, if $J$ is of finite distance from $X$, one can define a
map
\begin{align}\label{barphi defn}
\bar{\phi}: \mathrm{T}^{1,0}X\rightarrow \mathrm{T}^{0,1}X
\end{align}
by setting $$\bar{\phi}(v)=-\iota_2 \circ
\left(\iota_1|_{\mathrm{T}^{1,0}X_J}\right)^{-1}(v).$$This map is
well-defined since $\iota_1|_{\mathrm{T}^{1,0}X_J}$ is an isomorphism. 

The map $\bar\phi$ in \eqref{barphi defn}, or its complex conjugate 
\begin{align}\label{phi defn}
\phi: \, \mathrm{T}^{0,1}X \to \mathrm{T}^{1,0}X, \,v\mapsto {\phi(v)}=\bar{\bar{\phi}(\bar{v})}
\end{align}
completely determines the almost complex structure $J$. More precisely, we have
\begin{align}
\mathrm{T}^{1,0}X_J=\{v-\bar{\phi}(v):\,v\in \mathrm{T}^{1,0}X \},\
\mathrm{T}^{0,1}X_J=\{v-\phi(v):\,v\in \mathrm{T}^{0,1}X \},
\end{align}
and the corresponding dual spaces
\begin{align}\label{basis of cotang}
{\mathrm{T}^*}^{1,0}X_J=\{w+\phi(w):\,w\in {\mathrm{T}^*}^{1,0}X \},\
{\mathrm{T}^*}^{0,1}X_J=\{w+\bar{\phi}(w):\,w\in {\mathrm{T}^*}^{0,1}X \}.
\end{align}
The bundle morphism $\phi$ in \eqref{phi defn} gives a global section
$$\phi\in \Gamma(X,\left(\mathrm{T}^{0,1}X\right)^{*}\otimes\mathrm{T}^{1,0}X)=A^{0,1}(X,\mathrm{T}^{1,0}X).$$

Since
\begin{align*}
\mathrm{T}^{1,0}X\oplus \mathrm{T}^{0,1}X=\mathrm{T}_{\mathbb{C}}M=\mathrm{T}^{1,0}X_J\oplus \mathrm{T}^{0,1}X_J,
\end{align*}
the transformation matrix
\begin{equation*}
\left(
\begin{array}{ll}
I_n & -\bar{\phi}  \\
-\phi & I_n
\end{array}\right)
\end{equation*}
from a basis of $\mathrm{T}^{1,0}X\oplus \mathrm{T}^{0,1}X$ to a basis of
$\mathrm{T}^{1,0}X_J\oplus \mathrm{T}^{0,1}X_J$ must be nondegenerate. Therefore $
\det(I_n-\phi\bar{\phi})\neq 0. $  In fact, we have

\begin{proposition}[cf. Proposition 4.3 in \cite{Kir}] \label{propfinitedistance}
There is a bijective correspondence between the set of almost
complex structures of finite distance from $X$ and the set of all
$\phi \in A^{0,1}(X,\mathrm{T}^{1,0}X)$ such that, at each point $p \in M$,
the map
$$\phi \bar{\phi}:\, \mathrm{T}^{1,0}X \to \mathrm{T}^{1,0}X$$
has no eigenvalue equal to $1$.
\end{proposition}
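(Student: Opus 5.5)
The plan is to produce the inverse map explicitly, since the statement is essentially linear algebra at each point $p\in M$.

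\emph{From $J$ to $\phi$.} Let $J$ be of finite distance from $X$. Definition \ref{finite dist} and \eqref{barphi defn} give $\bar\phi$, and hence $\phi\in A^{0,1}(X,\mathrm{T}^{1,0}X)$ via \eqref{phi defn}. The spaces $\mathrm{T}^{1,0}X_J$ and $\mathrm{T}^{0,1}X_J$ are then the graphs
\[
\{v-\bar\phi(v)\},\qquad \{v-\phi(v)\}.
\]
These graphs must span $\mathrm{T}_\C M$ with trivial intersection.

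Suppose $\phi\bar\phi w=w$ for some $w\ne 0$ in $\mathrm{T}^{1,0}X$. Set $u=\bar\phi(w)\in \mathrm{T}^{0,1}X$. Then
\[
w-\bar\phi(w)=\phi(u)-u,
\]
so this vector lies in both $\mathrm{T}^{1,0}X_J$ and $\mathrm{T}^{0,1}X_J$. It is nonzero because its $(1,0)$-component is $w\neq 0$. This contradicts the direct sum decomposition. Hence $\phi\bar\phi$ has no eigenvalue $1$, which is the block determinant statement $\det(I-\phi\bar\phi)\neq 0$.

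\emph{From $\phi$ to $J$.} Conversely, take $\phi$ with $\phi\bar\phi$ having no eigenvalue $1$ at every point. Define
\[
E=\{v-\bar\phi(v):v\in \mathrm{T}^{1,0}X\}.
\]
Then $\bar E=\{\bar v-\phi(\bar v)\}$ is the graph over $\mathrm{T}^{0,1}X$.

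Run the computation above backwards. An element of $E\cap\bar E$ has the form $w-\bar\phi w=\phi u-u$. Comparing types gives $w=\phi u$ and $u=\bar\phi w$, so $\phi\bar\phi w=w$, forcing $w=0$ and hence $u=0$. Dimension count then gives $E\oplus\bar E=\mathrm{T}_\C M$.

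Now define $J=\sqrt{-1}$ on $E$ and $-\sqrt{-1}$ on $\bar E$. This $J$ is real because the two eigenspaces are complex conjugates, and $J^2=-1$. The map $\iota_1|_E$ sends $v-\bar\phi v\mapsto v$, an isomorphism, so $J$ has finite distance from $X$.

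\emph{Bijectivity and smoothness.} Starting from this $J$, the recipe \eqref{barphi defn} gives $-\iota_2(\iota_1|_E)^{-1}(v)=\bar\phi(v)$, recovering $\phi$. Conversely, $J$ is determined by $\mathrm{T}^{1,0}X_J$, which is determined by $\bar\phi$. So the two maps are mutually inverse. Smoothness in $p$ is clear because all operations are algebraic in $\phi$. The only point needing care is the conjugation bookkeeping between $\phi$ and $\bar\phi$, in particular checking that the graph of $-\phi$ is indeed the conjugate of the graph of $-\bar\phi$.
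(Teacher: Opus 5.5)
Your proposal is correct and is essentially the paper's argument. The paper observes that the change-of-basis matrix $\left(\begin{array}{ll} I & -\bar{\phi} \\ -\phi & I \end{array}\right)$ from $\mathrm{T}^{1,0}X\oplus\mathrm{T}^{0,1}X$ to $\mathrm{T}^{1,0}X_J\oplus\mathrm{T}^{0,1}X_J$ must be nondegenerate, which by a Schur complement is $\det(I-\phi\bar{\phi})\neq 0$; your graph-intersection computation is the kernel form of the same fact, and you additionally write out the converse construction of $J$ and the mutual-inverse check, which the paper leaves to \cite{Kir}.
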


\begin{definition}
If $\phi \in A^{0,1}(X,\mathrm{T}^{1,0}X)$ satisfies the condition in
Proposition \ref{propfinitedistance}, we say that $\phi$ is a
\emph{Beltrami differential}. If $\phi$ further satisfies the Maurer–Cartan
equation (i.e., the integrability condition)
\begin{align} \label{MCequ}
\bar{\partial} \phi = \frac{1}{2}[\phi,\phi],
\end{align}
then we call
$\phi$ an \emph{integrable Beltrami differential}.
\end{definition}

In conclusion, a Beltrami differential $\phi\in A^{0,1}(X,\mathrm{T}^{1,0}X)$ determines an
almost complex structure of finite distance from $X$. We denote the
corresponding almost complex structure (i.e. almost complex
manifold) by $X_\phi$. An integrable Beltrami differential
$\phi$ gives a complex structure  on $M$ by the
Newlander-Nirenberg theorem \cite{NN}, and the corresponding complex
manifold is denoted by $X_\phi$.

\subsection{Harmonic theory on compact K\"ahler manifolds}
\label{section-Hodge}
\noindent

In this section, we briefly review the Harmonic theory on compact
K\"ahler manifolds and fix the notations used in the next subsections.

Let $(E,h)$ be a Hermitian holomorphic vector bundle over a compact
complex manifold $M$ with Hermitian metric $g$. Let
$\nabla=\nabla'+\bp$ be the Chern connections of $(E,h)$.  The
Hermitian metrics on $E$ and $M$ induce an $L^2$ inner produce on
the space $A^{p,q}(M,E)$ of $E$-valued $(p,q)$-forms on $M$. We set
the Laplacians
\begin{align*}
\triangle_{\bp}=\bp \bp^*+\bp^*\bp \ \ \text{and} \ \
\triangle'=\nabla'\nabla'^*+\nabla'^*\nabla'.
\end{align*}
Hodge theory implies that there are  Green operator $G$ (resp. $G'$)
and harmonic projection $\mathbb{H}$ (resp. $\mathbb{H}'$) in the
Hodge decomposition corresponding to $\triangle_{\bp}$ (resp.
$\triangle^{'}$).
\begin{proposition} \label{Prop-Hodgeidentities}
We have the following identities:
\begin{align*}
\triangle_{\bar{\partial}}G=G\triangle_{\bar{\partial}}=I-\mathbb{H}, \
\bar{\partial}G=G\bar{\partial}, \ \bar{\partial}^*
G=G\bar{\partial}^*, \ \mathbb{H} G=G \mathbb{H}=0.
\end{align*}
Moreover, $ \bar{\partial} \mathbb{H}= \mathbb{H}\bar{\partial}=0, \
\bar{\partial}^* \mathbb{H}=\mathbb{H}\bar{\partial}^*=0. $  The
similar identities holds among the operators  $G'$, $\mathbb{H}'$,
$\nabla'$ and $\nabla'^*$.
\end{proposition}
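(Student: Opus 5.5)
The plan is to derive every identity from the Hodge decomposition for the elliptic, formally self-adjoint operator $\triangle_{\bp}$ acting on $A^{p,q}(M,E)$. This is the standard theorem for elliptic operators on compact manifolds, and I will assume it. It says that the harmonic space $\mathcal H^{p,q}=\ker\triangle_{\bp}$ is finite dimensional and that
$$A^{p,q}(M,E)=\mathcal H^{p,q}\oplus \triangle_{\bp}\bigl(A^{p,q}(M,E)\bigr)$$
is an $L^2$-orthogonal decomposition. Let $\mathbb H$ be the orthogonal projection onto $\mathcal H^{p,q}$. Define $G$ to be $0$ on $\mathcal H^{p,q}$ and, on the orthogonal complement, the inverse of the bijection $\triangle_{\bp}:(\mathcal H^{p,q})^{\perp}\to(\mathcal H^{p,q})^{\perp}$. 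With this definition, $\triangle_{\bp}G=G\triangle_{\bp}=I-\mathbb H$ and $\mathbb HG=G\mathbb H=0$ hold directly, by checking them separately on $\mathcal H^{p,q}$ and on its complement, and using that $\triangle_{\bp}$ kills $\mathcal H^{p,q}$ and preserves its complement by self-adjointness.

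Next I would characterize harmonic forms. On the compact manifold, integration by parts gives
$$(\triangle_{\bp}\alpha,\alpha)=\|\bp\alpha\|^2+\|\bp^*\alpha\|^2,$$
so $\alpha$ is harmonic if and only if $\bp\alpha=0$ and $\bp^*\alpha=0$. This yields $\bp\mathbb H=0$ and $\bp^*\mathbb H=0$ immediately. For $\mathbb H\bp=0$, note that for every $h\in\mathcal H$ we have $(\bp\beta,h)=(\beta,\bp^*h)=0$, so $\operatorname{im}\bp\perp\mathcal H$. The same argument with $\bp h=0$ gives $\mathbb H\bp^*=0$.

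For the commutation relations, I would first note that $\bp\triangle_{\bp}=\bp\bp^*\bp=\triangle_{\bp}\bp$, using $\bp^2=0$, and similarly $\bp^*\triangle_{\bp}=\triangle_{\bp}\bp^*$. Then I compare $\bp G$ and $G\bp$ on each summand of the decomposition. On $\mathcal H$, both vanish: $G$ kills harmonic forms, and $\bp h=0$. On $(\mathcal H)^{\perp}$, write $\alpha=\triangle_{\bp}\beta$ with $\beta\perp\mathcal H$. Then $G\bp\alpha=G\triangle_{\bp}\bp\beta=(I-\mathbb H)\bp\beta=\bp\beta$, because $\mathbb H\bp=0$. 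Also $\bp G\alpha=\bp(I-\mathbb H)\beta=\bp\beta$. Hence $\bp G=G\bp$, and the identical argument gives $\bp^*G=G\bp^*$.

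The only genuinely nontrivial input is the Hodge decomposition and the existence of $G$, which rest on elliptic regularity and the compactness of the resolvent; I would cite these rather than reprove them. Everything else is formal. For $\triangle'$, $G'$, $\mathbb H'$, $\nabla'$, $\nabla'^*$ the same proof applies verbatim, using $(\nabla')^2=0$. That holds because the curvature of the Chern connection is of type $(1,1)$, so its $(2,0)$-part vanishes. This is the one point I expect to need checking rather than copying.
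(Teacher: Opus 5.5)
Your proposal is correct and follows the same route the paper relies on: the paper gives no proof and treats the proposition as a standard consequence of the Hodge decomposition for the elliptic, formally self-adjoint Laplacians $\triangle_{\bar{\partial}}$ and $\triangle'$, and your argument is the standard derivation of these identities from that decomposition. You also correctly identify $(\nabla')^2=\Theta^{2,0}=0$, which holds because the Chern curvature has type $(1,1)$, as the one extra input needed for $\nabla'\triangle'=\triangle'\nabla'$; the ellipticity of $\triangle'$, which the paper likewise takes for granted, supplies the decomposition itself.
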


The Green operator is a bounded operator on the compact complex manifold, that means if we associate
certain norm $\|\cdot\|$, such as the $C^{k,\alpha}$-norm or the $L^{2}$-norm, on
$A^{p,q}(M,E)$, there is a constant $C$ independent $\sigma$, such
that
\begin{align}
\|G\sigma\|\leq C \|\sigma\| \ \text{for any}  \ \sigma\in
A^{p,q}(M,E).
\end{align}

Then we suppose $(M,\omega)$ is an $n$-dimensional compact K\"ahler
manifold with K\"ahler metric $\omega$,  and $\|\cdot\|_{L^2}$ be
the $L^{2}$-norm on the space $A^{p,q}(M)$ of smooth differential
forms induced by the  metric $\omega$. We set
\begin{align*}
\triangle_{\bp}=\bp \bp^*+\bp^*\bp,  \ \ \triangle_{\p}=\p\p^*+\p^*\p \
\text{and} \ \Delta_d=dd^*+d^*d,
\end{align*}
where $d=\p+\bp$.  

On $A^{p,q}(M)$, we have the equality of the
Laplacians $
\triangle_{\bar{\partial}}=\triangle_\partial=\frac{1}{2}\Delta_d. $ We
also let $$\mathbb{H}:\,=\mathbb{H}_{\bp}=\mathbb{H}_\partial=\mathbb{H}_d$$ to be the orthogonal projection from $A^{p,q}(M)$ to the harmonic space 
$$\mathbb{H}^{p,q}(M)=\ker
\triangle_{\bar{\partial}}=\ker
\triangle_{{\partial}}=\ker
\triangle_{d}.$$ 
Then the corresponding identities in
Proposition \ref{Prop-Hodgeidentities} hold. Furthermore, since the Laplacians
$\triangle_{\bar{\partial}}=\triangle_\partial$ and the corresponding Green operators 
$G_{\bar{\partial}}=G_\partial:\,=G$ 
on compact K\"ahler
manifold, we can derive the following proposition. 

\begin{proposition}
Let the operator $T:\, A^{p,q}(M) \to A^{p,q}(M)$ be defined by $T=\bar{\partial}^*G\partial$.
Then, for any $g\in A^{p,q}(M)$, we have that the quasi-isometry formula
\begin{align} \label{quasi-iso}
\|Tg\|^{2}_{L^2}=\|g\|_{L^2}^2-\|\mathbb{H}g\|_{L^2}^2-\|\partial\partial^* Gg\|^{2}-\|\bar{\partial} G\partial g\|^2_{L^2}\leq \|g\|_{L^2}
\end{align}
\end{proposition}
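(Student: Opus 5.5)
The plan is to expand $\|Tg\|_{L^2}^2$ as an inner product and then use the Hodge identities of Proposition \ref{Prop-Hodgeidentities}, together with the K\"ahler identities $\triangle_{\bar\partial}=\triangle_\partial$ and $G_{\bar\partial}=G_\partial=G$, to move operators from one side to the other. The ingredients are these. First, $G$ commutes with $\partial,\bar\partial,\partial^*,\bar\partial^*$. Second, $\mathbb H$ annihilates the images of $\partial$ and $\partial^*$, so $\mathbb H\partial=0$ and $\mathbb H\partial^*=0$. Third, $\triangle G=G\triangle=I-\mathbb H$, where $\triangle$ may be taken to be either $\triangle_{\bar\partial}$ or $\triangle_\partial$.

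The first step handles the $\bar\partial$-part. Write $\|Tg\|^2=\langle \bar\partial\bar\partial^* G\partial g,\, G\partial g\rangle$. Apply $\triangle_{\bar\partial}G=I-\mathbb H$ to $\partial g$ and use $\mathbb H\partial g=0$; this gives $\bar\partial\bar\partial^*G\partial g=\partial g-\bar\partial^*\bar\partial G\partial g$. Hence
$$\|Tg\|^2=\langle \partial g, G\partial g\rangle-\|\bar\partial G\partial g\|^2 .$$

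The second step handles the $\partial$-part. Since $G$ commutes with $\partial$ and $\partial^*$, we have $\langle\partial g,G\partial g\rangle=\langle g,\partial^*\partial Gg\rangle$. Using $\triangle_\partial G=I-\mathbb H$, this becomes $\partial^*\partial Gg=g-\mathbb Hg-\partial\partial^*Gg$. Because $\mathbb H$ is an orthogonal projection, $\langle g,\mathbb Hg\rangle=\|\mathbb Hg\|^2$, and therefore
$$\langle\partial g,G\partial g\rangle=\|g\|^2-\|\mathbb Hg\|^2-\langle \partial^* g,\partial^*Gg\rangle .$$

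The last step, which is the only slightly delicate point, is to identify $\langle\partial^*g,\partial^*Gg\rangle$ with $\|\partial\partial^*Gg\|^2$. Compute $\|\partial\partial^*Gg\|^2=\langle \partial^*Gg,\partial^*\partial\partial^*Gg\rangle$. Since $\partial^*\partial^*=0$, we have $\partial^*\partial\partial^*=\partial^*\triangle_\partial$. Then $\partial^*\triangle_\partial Gg=\partial^*(g-\mathbb Hg)=\partial^*g$, using $\partial^*\mathbb H=0$. This gives exactly $\langle\partial^*Gg,\partial^*g\rangle$, which is real and equals the term above.

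Combining the three steps yields the identity \eqref{quasi-iso}. The inequality then follows because every subtracted term is a nonnegative squared norm, giving $\|Tg\|^2\le\|g\|^2$, i.e. $\|Tg\|_{L^2}\le\|g\|_{L^2}$. The right-hand side of \eqref{quasi-iso} should read $\|g\|_{L^2}^2$, not $\|g\|_{L^2}$.
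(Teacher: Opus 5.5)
Your proposal is correct and is essentially the paper's computation: both move $\bar\partial$ across the inner product, use $\triangle_{\bar\partial}G=I-\mathbb H$ with $\mathbb H\partial=0$, and then use $\triangle_\partial G=I-\mathbb H$. The only cosmetic difference is that you identify $\langle \partial^*g,\partial^*Gg\rangle=\|\partial\partial^*Gg\|^2$ via $\partial^*\partial\partial^*=\partial^*\triangle_\partial$, whereas the paper reads it off from the orthogonality of $g=\mathbb Hg+\partial\partial^*Gg+\partial^*\partial Gg$; your correction of the final bound to $\|g\|_{L^2}^2$ is also right.
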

\begin{proof} 
For any $g\in A^{p,q}(M)$, we have
\begin{align}
\|\bar{\partial}^*G\partial g\|_{L^2}^2&=\langle 
\bar{\partial}^*G\partial g,\bar{\partial}^*G\partial g\rangle
=\langle \bar{\partial}\bar{\partial}^*G\partial g, G\partial g\rangle\notag\\
&=\langle \triangle_{\bar{\partial}}G\partial g,G\partial g\rangle-\langle \bar{\partial}^*\bar{\partial}G\partial g, G\partial g\rangle\notag\\
&=\langle \partial g,G\partial g\rangle-\langle \bar{\partial}G\partial g,\bar{\partial}G\partial g\rangle\notag\\
&=\langle  g,\triangle_\partial Gg\rangle-\langle  g, \partial\partial^* Gg\rangle-\|\bar{\partial} G\partial g\|^2_{L^2}\notag\\
&=\langle  g, g-\mathbb{H}g\rangle-\langle  \partial^*g,G\partial^*g\rangle-\|\bar{\partial} G\partial g\|^2_{L^2}\notag\\
&=\|g\|_{L^2}^2-\|\mathbb{H}g\|_{L^2}^2-\langle  \partial^*g,G\partial^*g\rangle-\|\bar{\partial} G\partial g\|^2_{L^2}\notag.
\end{align}
According to the Harmonic theory, we have the orthogonal decomposition of $g$,
$$g=\mathbb Hg+\partial\partial^{*}Gg+\partial^{*}\partial Gg.$$
Hence $$\langle  \partial^*g,G\partial^*g\rangle = \langle  g,\partial\partial^* Gg\rangle =\|\partial\partial^* Gg\|^{2},$$
which implies \eqref{quasi-iso}.
\end{proof}
The {\em quasi-isometry} formula was first proved in \cite{LRY}.

\subsection{Extension equations and closed formulas} \label{subsection-extension}
\noindent

Let $X$ be a complex manifold of dimension $n$. Let $\phi\in
A^{0,k}(X,\mathrm{T}^{1,0}X)$ be a $\mathrm{T}^{1,0}X$-value $(0,k)$-form. We
introduce the contraction operator
\begin{align*}
i_{\phi}: A^{p,q}(X)\rightarrow A^{p-1,q+k}(X)
\end{align*}
as in \cite{LRY}. If we write $\phi=\eta\otimes Y$ with $\eta\in
A^{0,k}(X)$ and $Y\in C^{\infty}(\mathrm{T}^{1,0}X)$, then for $\sigma\in
A^{p,q}(X)$,
\begin{align*}
i_{\phi}(\sigma)=\eta\wedge i_Y\sigma.
\end{align*}
Sometimes, we also use the notations $\phi\lrcorner
\eta=\phi\eta$ to denote the contraction $i_{\phi} \eta$
alternatively. By definition, we have
\begin{align*}
i_{\phi}i_{\phi'}=(-1)^{(k+1)(k'+1)}i_{\phi'}i_{\phi}
\end{align*}
if $\phi\in A^{0,k}(X)$ and $\phi' \in A^{0,k'}(X)$. The Lie
derivation of $\mathcal{\phi}$ is defined by
\begin{align*}
\mathcal{L}_{\phi}=(-1)^kd\circ i_\phi+i_\phi \circ d
\end{align*}
which can be decomposed into the sum of two parts
\begin{align*}
\mathcal{L}_{\phi}^{1,0}=(-1)^k\partial \circ i_\phi
+i_\phi \circ \partial, \ \
\mathcal{L}_{\phi}^{0,1}=(-1)^k\bar{\partial}\circ i_\phi
+i_\phi \circ\bar{\partial}.
\end{align*}
The Lie bracket of $\phi$ and $\phi'$ is defined by
\begin{align*}
[\phi,\phi']=\sum_{i,j=1}^n\left(\phi^i \wedge
\partial_i \phi'^j-(-1)^{kk'}\phi'^i\wedge \partial_i
\phi^j\right)\otimes \partial_j,
\end{align*}
if  $\phi=\sum_{i}\phi^i\partial_{i}\in A^{0,k}(X,\mathrm{T}^{1,0}X)$
and $\phi'=\sum_{i}\phi'^i\partial_{i}\in
A^{0,k'}(X,\mathrm{T}^{1,0}X)$.

We have the following generalized Cartan formula \cite{LR,LRY} which
can be proved by direct computations.
\begin{lemma}\label{cartan}
For any $\phi, \phi'\in A^{0,1}(X,\mathrm{T}^{1,0}X)$, then on $
A^{*,*}(X)$ we have that
\begin{align} \label{genralizedcartan}
i_{[\phi,\phi']}=\mathcal{L}_{\phi}\circ
i_{\phi'}-i_{\phi'}\circ\mathcal{L}_{\phi}.
\end{align}
\end{lemma}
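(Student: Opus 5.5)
The statement to prove is the generalized Cartan formula of Lemma \ref{cartan}: for $\phi,\phi'\in A^{0,1}(X,\mathrm{T}^{1,0}X)$, $i_{[\phi,\phi']}=\mathcal{L}_{\phi}\circ i_{\phi'}-i_{\phi'}\circ\mathcal{L}_{\phi}$ on $A^{*,*}(X)$. I plan a direct local computation that reduces everything to low-degree forms.

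\textbf{Step 1: reduce to low degree.} Both sides are local operators. Since $\phi,\phi'$ have degree $(0,1)$ with vector-field part, $i_\phi$ and $i_{\phi'}$ are even derivations of the algebra $A^{*,*}(X)$: each has total degree $0$ and $(-1)^{(k+1)(k'+1)}=1$. Also $\mathcal{L}_\phi=-d\,i_\phi+i_\phi d$ is the graded commutator $[i_\phi,d]$. The commutator of $d$ (odd derivation) with the even derivation $i_\phi$ is an odd derivation, so $\mathcal{L}_\phi$ is an odd derivation. The commutator $[\mathcal{L}_\phi,i_{\phi'}]$ of an odd and an even derivation is again an odd derivation. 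Likewise $i_{[\phi,\phi']}$ is a derivation, since $[\phi,\phi']$ is a $(0,2)$-vector-valued form and contraction by it is a derivation of odd parity.

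A derivation is determined by its values on functions and on the local generators $dz_j$ and $d\bar z_j$. It therefore suffices to check the identity on $f$, on $dz_j$, and on $d\bar z_j$, in local holomorphic coordinates.

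\textbf{Step 2: check on generators.} Write $\phi=\sum_i\phi^i\partial_i$ with $\phi^i=\sum_l\phi^i_{\bar l}d\bar z_l$, and similarly for $\phi'$.

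\emph{Functions.} Here $i_{\phi'}f=0$ and $i_{[\phi,\phi']}f=0$. The right-hand side reduces to $-i_{\phi'}(i_\phi df)=-i_{\phi'}(\phi^i\partial_i f)=0$, because $\phi^i\partial_i f$ is a $(0,1)$-form and $i_{\phi'}$ kills $(0,*)$-forms.

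\emph{Antiholomorphic generators $d\bar z_j$.} The left side vanishes. On the right, $i_{\phi'}d\bar z_j=0$ and $d(d\bar z_j)=0$, while $\mathcal{L}_\phi d\bar z_j=-d(0)+0=0$. So both sides are zero.

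\emph{Holomorphic generators $dz_j$.} This is the main computation. We have $i_{\phi'}dz_j=\phi'^j$, and $d\,dz_j=0$, so $\mathcal{L}_\phi dz_j=-d\phi^j$. Hence
\[
\mathcal{L}_\phi\phi'^j-i_{\phi'}(-d\phi^j)=-d\,i_\phi\phi'^j+i_\phi d\phi'^j+i_{\phi'}d\phi^j .
\]
The first term vanishes because $\phi'^j$ is a $(0,1)$-form. In $i_\phi d\phi'^j$ only the $\partial$-part contributes, giving $\phi^i\wedge\partial_i\phi'^j$ up to the sign convention for contracting into the first slot. Similarly $i_{\phi'}d\phi^j$ gives $\pm\phi'^i\wedge\partial_i\phi^j$. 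With $k=k'=1$, the defining formula gives $[\phi,\phi']^j=\phi^i\wedge\partial_i\phi'^j+\phi'^i\wedge\partial_i\phi^j$, which is exactly $i_{[\phi,\phi']}dz_j$.

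\textbf{Main obstacle.} The main difficulty is sign bookkeeping. One must fix where $i_Y$ contracts when the form is $d\bar z_l\wedge dz_i$ versus $dz_i\wedge d\bar z_l$, and this has to be consistent with the convention $i_\phi\sigma=\eta\wedge i_Y\sigma$. One must also confirm that both derivations have the same parity, so that checking on generators really suffices. I would do these carefully with $\partial_i\phi'^j=\sum_l\partial_i\phi'^j_{\bar l}\,d\bar z_l$ written explicitly. If the parity argument gets murky, the fallback is to verify the identity on a decomposable form $\alpha\wedge\beta$ via the Leibniz rule, which amounts to the same thing.
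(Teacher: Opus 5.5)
Your proof is correct. The paper does not actually prove this lemma; it only says it ``can be proved by direct computations'' and cites Liu--Rao and Liu--Rao--Yang. The literal reading of that is a coordinate check of both sides on arbitrary forms $f\,dz_I\wedge d\bar z_J$.

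Your route is more structural, and the structural facts all hold:
\begin{itemize}
\item For $\phi\in A^{0,1}(X,\mathrm{T}^{1,0}X)$, $i_\phi$ satisfies $i_\phi(\alpha\wedge\beta)=i_\phi\alpha\wedge\beta+\alpha\wedge i_\phi\beta$, so it is an even derivation.
\item $\mathcal{L}_\phi=i_\phi d-d\,i_\phi$ is the graded commutator $[i_\phi,d]$.
\item $\mathcal{L}_\phi i_{\phi'}-i_{\phi'}\mathcal{L}_\phi$ is the graded commutator $[\mathcal{L}_\phi,i_{\phi'}]$, hence a derivation of degree $+1$.
\item $i_{[\phi,\phi']}$, with $[\phi,\phi']\in A^{0,2}(X,\mathrm{T}^{1,0}X)$, is also a derivation of degree $+1$.
\end{itemize}
Since both sides are local, it is enough to compare them on $f$, $dz_j$ and $d\bar z_j$. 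What this buys you is that the multi-index sign bookkeeping on general $(p,q)$-forms, which a brute-force check has to do, disappears. It is the standard Fr\"olicher--Nijenhuis-style argument.

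The ``$\pm$'' you left in the $dz_j$ case resolves with a plus sign, so no ambiguity remains:
\begin{itemize}
\item With $i_\phi\sigma=\sum_a\phi^a\wedge i_{\partial_a}\sigma$ and $i_{\partial_a}$ the usual interior product, $i_\phi(dz_i\wedge d\bar z_l)=\phi^i\wedge d\bar z_l$.
\item $i_\phi$ kills the $(0,2)$-part of $d\phi'^j$.
\item Hence $i_\phi d\phi'^j=\sum_i\phi^i\wedge\partial_i\phi'^j$, where $\partial_i\phi'^j=\sum_l\partial_i\phi'^j_{\bar l}\,d\bar z_l$.
\item In the same way, $i_{\phi'}d\phi^j=\sum_i\phi'^i\wedge\partial_i\phi^j$.
\item The term $d(i_\phi\phi'^j)$ vanishes because $\phi'^j$ is a $(0,1)$-form.
\item Since $kk'=1$, the paper's bracket gives $[\phi,\phi']^j=\sum_i\bigl(\phi^i\wedge\partial_i\phi'^j+\phi'^i\wedge\partial_i\phi^j\bigr)$, which matches the right-hand side term for term.
\end{itemize}
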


Let $\sigma\in A^{*,*}(X)$. By applying the formula
(\ref{genralizedcartan}) to $\sigma$ and considering the types,  we
immediately obtain
\begin{align} \label{genralizedcartan_special}
[\phi,\phi]\lrcorner \sigma=2\phi\lrcorner
\partial(\phi\lrcorner \sigma) -\partial(\phi\lrcorner
\phi\lrcorner\sigma)-\phi\lrcorner\phi\lrcorner
\partial\sigma.
\end{align}

Given a Beltrami differential $\phi$, for any $x\in X$, we can
pick a local holomorphic coordinate $(U,z_1,...,z_n)$ near $x$. Then
by Proposition 1.11 of \cite{Griffiths2}, we have that
\begin{align*}
{\mathrm{T}^{*}}^{1,0}_x(X_{\phi})=\text{Span}_{\mathbb{C}}\{dz_1+\phi
dz_1,...,dz_n+\phi dz_n\},
\end{align*}
and for any $1\leq p\leq n$,
\begin{align*}
\bigwedge^{p}\mathrm{T}^{1,0}_x(X_{\phi})=\text{Span}_{\mathbb{C}}\{(dz_{i_1}+\phi
dz_{i_1})\wedge\cdots \wedge (dz_{i_p}+\phi dz_{i_p})|1\leq
i_1<\cdots< i_p\leq n\}.
\end{align*}

Consider  the operator $\rho_{\phi}=e^{i_\phi}$ defined by 
\begin{align} \label{exponentialphi}
\rho_{\phi}(\sigma)= e^{i_\phi}\sigma= \sum_{k\geq
0}\frac{1}{k!}i_{\phi}^k\sigma,\, \forall\, \sigma \in A^{p,q}(X).
\end{align}
Let $\sigma=\sum_{I,J}\sigma_{I,\bar{J}}dz_{I}\wedge dz_{\bar J} \in A^{p,q}(X)$.
By a straightforward computation we obtain
\begin{align}
e^{i_{\phi}}(\sigma)&=\sum_{I,J}\sigma_{I,\bar{J}}e^{i_{\phi}}(dz_{i_1}\wedge\cdots
\wedge dz_{i_p})\wedge d\bar{z}^{j_1}\wedge\cdots \wedge
d\bar{z}^{j_q}\nonumber \\
&=\sum_{I,J}\sigma_{I,\bar{J}}(dz_{i_1}+\phi
dz_{i_1})\wedge\cdots\wedge (dz_{i_p}+\phi dz_{i_p})\wedge
d\bar{z}^{j_1}\wedge\cdots \wedge d\bar{z}^{j_q} ,\label{e-sigma}
\end{align}
where $\phi d\bar{z}^{j}=0$, for any $j$.

Since the total degree of $e^{i_{\phi}}(\sigma)$ is $p+q$, and \eqref{e-sigma} implies that $e^{i_{\phi}}(\sigma)$ contains at least $p$-holomorphic forms on $X_{\phi}$, we have that $e^{i_\phi}$ is a linear map $$e^{i_\phi}: A^{p,q}(X)\rightarrow
A^{p,q}(X_\phi)\oplus A^{p+1,q-1}(X_\phi)\oplus \cdots\oplus A^{p+q,0}(X_\phi):\,= F^pA^{p+q}(X_\phi),$$
which induces an isomorphism 
$$e^{i_\phi}:\, F^pA^{p+q}(X)\rightarrow F^pA^{p+q}(X_\phi).$$

By the generalized Cartan formula (\ref{genralizedcartan}), it
follows that
\begin{proposition} \cite{Clemens,LRY}
Let $\phi \in A^{0,1}(X,\mathrm T^{1,0}X)$, then on $A^{*,*}(X)$, we have
\begin{align} \label{commuted}
e^{-i_\phi}\circ d\circ
e^{i_\phi}=d-\mathcal{L}_{\phi}-i_{\frac{1}{2}[\phi,\phi]}.
\end{align}
\end{proposition}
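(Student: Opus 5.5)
The plan is to treat \eqref{commuted} as an identity of operators on $A^{*,*}(X)$ and expand the conjugation $e^{-i_\phi}\circ d\circ e^{i_\phi}$ in iterated commutators with $A:=i_\phi$. Everything is finite here. Since $i_\phi$ maps $A^{p,q}(X)$ to $A^{p-1,q+1}(X)$, it lowers the holomorphic degree by one, so $i_\phi^{k}=0$ on $A^{p,q}(X)$ for $k>p$. Hence $e^{\pm i_\phi}$ are finite sums, and no convergence issue arises.

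The algebraic tool is the standard identity
\begin{align*}
e^{-A}\,D\,e^{A}=D+[D,A]+\tfrac{1}{2!}[[D,A],A]+\tfrac{1}{3!}[[[D,A],A],A]+\cdots,
\end{align*}
where $[\cdot,\cdot]$ is the ordinary commutator. We use ordinary rather than graded commutators because $A=i_\phi$ has total degree $0$ and is therefore an even operator. The identity can be checked by differentiating $F(s)=e^{-sA}De^{sA}$ in $s$, which gives $F'(s)=[F(s),A]$. Alternatively, one can compare coefficients of the two finite sums directly.

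Apply this with $D=d$.
\begin{itemize}
\item First-order term. By the definition of the Lie derivative with $k=1$, $\mathcal{L}_\phi=-d\circ i_\phi+i_\phi\circ d$. Hence $[d,i_\phi]=d\,i_\phi-i_\phi\,d=-\mathcal{L}_\phi$.
\item Second-order term. This is $\tfrac12[-\mathcal{L}_\phi,i_\phi]=-\tfrac12(\mathcal{L}_\phi i_\phi-i_\phi\mathcal{L}_\phi)$. By the generalized Cartan formula, Lemma~\ref{cartan} with $\phi'=\phi$, this equals $-\tfrac12 i_{[\phi,\phi]}=-i_{\frac12[\phi,\phi]}$.
\item Third-order term. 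This is, up to a constant, $[i_{[\phi,\phi]},i_\phi]$. Here $[\phi,\phi]\in A^{0,2}(X,\mathrm{T}^{1,0}X)$ and $\phi\in A^{0,1}(X,\mathrm{T}^{1,0}X)$. The commutation rule $i_\phi i_{\phi'}=(-1)^{(k+1)(k'+1)}i_{\phi'}i_\phi$ with $k=2$, $k'=1$ gives the sign $(-1)^{6}=1$. So the two contractions commute, this commutator vanishes, and all higher terms vanish with it.
\end{itemize}
Summing the surviving terms gives $d-\mathcal{L}_\phi-i_{\frac12[\phi,\phi]}$, which is \eqref{commuted}.

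The main obstacle is bookkeeping of signs. We must confirm that $i_\phi$ is even, so that ordinary commutators are the right ones in the expansion. We must also check that the sign conventions in the definition of $\mathcal{L}_\phi$ and in Lemma~\ref{cartan} match, so that the quadratic term comes out as exactly $-\tfrac12 i_{[\phi,\phi]}$. If needed, this can be cross-checked by applying both sides to a $(p,q)$-form and comparing components by type, using \eqref{genralizedcartan_special}.
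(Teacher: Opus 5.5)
Your proposal is correct and uses the same two ingredients as the paper: the generalized Cartan formula (Lemma~\ref{cartan}), which gives $[[d,i_\phi],i_\phi]=-i_{[\phi,\phi]}$, and the commutation $i_\phi\, i_{[\phi,\phi]}=i_{[\phi,\phi]}\, i_\phi$, which kills the cubic term. The paper turns these into the inductive formula $d(i_\phi^k\eta)=k\,i_\phi^{k-1}d(i_\phi\eta)-(k-1)i_\phi^k d\eta-\frac{k(k-1)}{2}i_\phi^{k-2}i_{[\phi,\phi]}\eta$ and then sums the exponential series; your adjoint (Hadamard) expansion, which stops after the quadratic term, is the same computation in a more compact form.
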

\begin{proof}
Let $\eta\in A^{*,*}(X)$, formula (\ref{genralizedcartan}) implies
\begin{align} \label{genralizedcartan1}
d i_\phi^2\eta=2i_\phi  d i_\phi\eta-i_\phi^2
d\eta-i_{[\phi,\phi]}\eta.
\end{align}
Substituting $\eta$ with $i_\phi \eta$ in
(\ref{genralizedcartan1}), we obtain
\begin{align*}
d(i_\phi^3 \eta)&=2i_\phi d(i_\phi^2 \eta)-i_\phi^2
di_\phi \eta-i_{[\phi,\phi]}i_\phi\eta\\\nonumber
&=3i_\phi^2 d(i_\phi \eta)-2i_\phi^3d\eta-3i_\phi
i_{[\phi,\phi]}\eta.
\end{align*}
where we have used $i_\phi
i_{[\phi,\phi]}=i_{[\phi,\phi]}i_{\phi}$. Then by
induction, we immediately have
\begin{align*}
d(i_\phi^k \eta)=ki_\phi^{k-1}d(i_\phi
\eta)-(k-1)i_\phi^kd\eta-\frac{k(k-1)}{2}i_\phi^{k-2}i_{[\phi,\phi]}\eta
\end{align*}
for $k\geq 2$. Through a straightforward computation, it is easy to
show that
\begin{align*}
d(e^{i_\phi}\eta)=e^{i_\phi}\left(d\eta-\mathcal{L}_{\phi}\eta-i_{\frac{1}{2}[\phi,\phi]}\eta\right).
\end{align*}
The proof of (\ref{commuted}) is completed.
\end{proof}

\begin{corollary} \label{commuted1}
If $\phi\in A^{0,1}(X,\mathrm T^{1,0}X)$ is integrable, then on
$A^{*,*}(X)$, we have
\begin{align} \label{obstructionequation1}
e^{-i_\phi}\circ d\circ
e^{i_\phi}=d-\mathcal{L}_{\phi}^{1,0}=d+\partial
i_\phi-i_{\phi} \partial.
\end{align}
\end{corollary}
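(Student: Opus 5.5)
The plan is to derive the corollary directly from \eqref{commuted}: integrability $\bar\partial\phi=\frac12[\phi,\phi]$ should let me cancel the $(0,1)$-part of the Lie derivative against the bracket term. Since $\phi\in A^{0,1}(X,\mathrm{T}^{1,0}X)$ has $k=1$, the definitions give
$$\mathcal{L}_{\phi}=\mathcal{L}_{\phi}^{1,0}+\mathcal{L}_{\phi}^{0,1},\qquad \mathcal{L}^{1,0}_{\phi}=-\partial i_\phi+i_\phi\partial,\qquad \mathcal{L}^{0,1}_{\phi}=-\bar\partial i_\phi+i_\phi\bar\partial .$$
It therefore suffices to prove the operator identity $\mathcal{L}^{0,1}_{\phi}+i_{\frac12[\phi,\phi]}=0$ on $A^{*,*}(X)$. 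Substituting this into \eqref{commuted} yields $d-\mathcal{L}_\phi^{1,0}=d+\partial i_\phi-i_\phi\partial$, which is exactly \eqref{obstructionequation1}.

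The key step is the Leibniz-type identity $\bar\partial i_\phi - i_\phi\bar\partial = i_{\bar\partial\phi}$, i.e. $-\mathcal{L}^{0,1}_\phi=i_{\bar\partial\phi}$. I would check it locally. Write $\phi=\sum_j\phi^j\otimes\partial_j$ with $\phi^j$ a $(0,1)$-form, and $\sigma=f\,dz_I\wedge d\bar z_J$. Then $i_\phi\sigma=\sum_j\phi^j\wedge f\, i_{\partial_j}(dz_I)\wedge d\bar z_J$. Applying $\bar\partial$ gives $\bar\partial\phi^j$ times the rest, minus $\phi^j\wedge\bar\partial(f\,i_{\partial_j}dz_I\wedge d\bar z_J)$; the sign comes from $\phi^j$ having odd degree. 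The $\bar\partial$ of $f$ then reproduces $i_\phi\bar\partial\sigma$, because $i_{\partial_j}$ annihilates $d\bar z$ and the ordering signs match. What remains is $\sum_j\bar\partial\phi^j\wedge i_{\partial_j}\sigma$, which equals $i_{\bar\partial\phi}\sigma$.

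Applying integrability then gives $i_{\bar\partial\phi}=i_{\frac12[\phi,\phi]}$. Hence $\mathcal{L}^{0,1}_\phi=-i_{\bar\partial\phi}=-i_{\frac12[\phi,\phi]}$, and the two terms in \eqref{commuted} cancel as required.

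The main obstacle is the sign bookkeeping in the local computation. The signs have to be consistent with the convention $\mathcal{L}_\phi=(-1)^k d i_\phi+i_\phi d$ and with the sign conventions of the bracket, so that the identity reads $-\mathcal{L}^{0,1}_\phi=i_{\bar\partial\phi}$ and not its negative. As a cross-check I would use the special case \eqref{genralizedcartan_special}: it fixes the relative normalization of $i_{[\phi,\phi]}$ against contractions, which confirms that the Maurer--Cartan equation \eqref{MCequ} is indeed the condition that makes the $(0,1)$-terms vanish.
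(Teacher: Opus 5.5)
Your proposal is correct and is essentially the paper's proof. The paper also cancels $\mathcal{L}^{0,1}_\phi$ against $i_{\frac12[\phi,\phi]}$ in \eqref{commuted}, using $\bar\partial i_\phi - i_\phi\bar\partial = i_{\bar\partial\phi}$ and $\bar\partial\phi=\frac12[\phi,\phi]$; the paper only calls that identity a straightforward computation, while your local check fills it in with the signs right.
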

\begin{proof}
By a straightforward computation, we have $
\bar{\partial}i_{\phi}-i_{\phi}\bar{\partial}=i_{\bar{\partial}\phi}.
$ Then Corollary \ref{commuted1} follows directly from the
integrability of $\phi$, i.e. $\bp
{\phi}=\frac{1}{2}[\phi,\phi]$.
\end{proof}
In particular, we have

\begin{corollary} \label{coro-obstructionequation2}
Let $\phi\in A^{0,1}(X,\mathrm T^{1,0}X)$ be an integrable Beltrami differential. Then for any smooth form $\sigma \in A^{p,q}(X)$, the
corresponding form
$\rho_{\phi}(\sigma)=e^{i_{\phi}}(\sigma)$ in $F^pA^{p+q}(X_\phi)$ is
$d$-closed, if and only if
\begin{equation}\label{obstructionequation2}
\left\{ \begin{array}{lr} {\partial}\sigma=0,\\
\bar{\partial}\sigma =-\partial(\phi\lrcorner \sigma).
\end{array} \right.
\end{equation}
\end{corollary}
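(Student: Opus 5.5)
The statement to prove is Corollary~\ref{coro-obstructionequation2}. I will derive it directly from Corollary~\ref{commuted1}. Since $e^{i_\phi}$ is invertible on $A^{*,*}(X)$, with inverse $e^{-i_\phi}$, we have
\[
d\bigl(e^{i_\phi}\sigma\bigr)=0 \iff e^{-i_\phi}\,d\,e^{i_\phi}\sigma=0 \iff d\sigma+\partial(i_\phi\sigma)-i_\phi\partial\sigma=0 ,
\]
where the last equivalence uses the integrability of $\phi$ through \eqref{obstructionequation1}. The rest of the argument is a decomposition of this single equation by bidegree on $X$.

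For $\sigma\in A^{p,q}(X)$, the terms of
\[
\partial\sigma+\bar\partial\sigma+\partial(\phi\lrcorner\sigma)-\phi\lrcorner\partial\sigma
\]
have the following types:
\begin{itemize}
\item $\partial\sigma\in A^{p+1,q}$;
\item $\bar\partial\sigma\in A^{p,q+1}$;
\item $\partial(\phi\lrcorner\sigma)\in A^{p,q+1}$, since $\phi\lrcorner\sigma\in A^{p-1,q+1}$;
\item $\phi\lrcorner\partial\sigma\in A^{p,q+1}$.
\end{itemize}
The equation therefore splits into two equations according to type:
\[
\partial\sigma=0 \quad\text{(type $(p+1,q)$)},\qquad \bar\partial\sigma+\partial(\phi\lrcorner\sigma)-\phi\lrcorner\partial\sigma=0 \quad\text{(type $(p,q+1)$)}.
\]

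For the forward direction, the first equation gives $\partial\sigma=0$. Substituting this into the second kills the term $\phi\lrcorner\partial\sigma$, leaving $\bar\partial\sigma=-\partial(\phi\lrcorner\sigma)$. This is exactly \eqref{obstructionequation2}.

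For the converse, assume \eqref{obstructionequation2}. Then $\phi\lrcorner\partial\sigma=0$, and the full expression above becomes
\[
0+\bar\partial\sigma+\partial(\phi\lrcorner\sigma)-0=0 .
\]
Hence $e^{-i_\phi}de^{i_\phi}\sigma=0$, and applying $e^{i_\phi}$ gives $d(e^{i_\phi}\sigma)=0$.

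There is no real obstacle here. The only points to check are the bidegree bookkeeping for the contraction $i_\phi$, namely $(p,q)\mapsto(p-1,q+1)$, and the invertibility of $e^{i_\phi}$. Invertibility holds because $i_\phi$ is nilpotent on forms of bounded degree, so the exponential series is a finite sum and $e^{-i_\phi}$ is an honest two-sided inverse.
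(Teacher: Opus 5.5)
Your proof is correct and follows the paper's argument. Like the paper, you apply Corollary~\ref{commuted1} to reduce $d$-closedness of $e^{i_\phi}\sigma$ to $\partial\sigma+\bar\partial\sigma+\partial(\phi\lrcorner\sigma)-\phi\lrcorner\partial\sigma=0$, and then split this equation by bidegree. You additionally make explicit two points the paper leaves implicit: the invertibility of $e^{i_\phi}$ and the type bookkeeping.
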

\begin{proof}
From Corollary \ref{commuted1}, we have that $e^{i_\phi}(\sigma)$ is $d$-closed if
and only if
$$\partial\sigma+\bp\sigma +\partial(\phi\lrcorner \sigma)-\phi\lrcorner (\partial\sigma)=0.$$
By comparing types, we have \eqref{obstructionequation2}.
\end{proof}

\begin{definition}\label{son defn}
The supremum operator norm of a Beltrami differential 
$\phi \in A^{0,1}(X,\mathrm T^{1,0}X)$ is defined as
\begin{equation}\label{supremum operator norm}
\|\phi\|^{\omega} := \sup_{x\in X} \|\phi\|^{\omega}_{2,x},
\end{equation}
where the pointwise operator norm is given by
\begin{equation*}
\|\phi\|^{\omega}_{2,x} := \sup_{v\in \mathrm T_{x}^{0,1}X}
\frac{\|\phi(v)\|_{x}^{\omega}}{\|v\|_{x}^{\omega}}, \quad x\in X,
\end{equation*}
and $\|\cdot\|_{x}^{\omega}$ denotes the Hermitian metric on 
$\mathrm T_{x}^{1,0}X$ and $\mathrm T_{x}^{0,1}X$ induced by the K\"ahler form $\omega$.\end{definition}

Since $X$ is compact, there exists some $x_{0}\in X$ such that $\|\phi\|=\|\phi\|_{2,x_{0}}$.

%

We note that the symbol $\|\cdot\|$ is also used to denote the $L^2$ norm on $A^{p,q}(X)$ in this paper. Here, however, $\|\phi\|$ will always refer to the supremum operator norm of a Beltrami differential $\phi \in A^{0,1}(X,\mathrm T^{1,0}X)$ with respect to the K\"ahler metric $\omega$, and thus no ambiguity will arise.

We are ready to prove the following result.

\begin{proposition} \label{prop1-section}
Let $\phi$ be an integrable Beltrami differential of $X$ with
supremum operator norm $\|\phi\|<1$. Given a harmonic
form $\sigma_0\in \mathbb H^{p,q}(X)$, if the smooth $(p,q)$-form $\sigma$ on $X$ is a solution of the
equation
\begin{align} \label{integralequation-section}
\sigma=\sigma_0-\bar{\partial}^*G\partial \left(\phi\lrcorner
\sigma\right)= \sigma_0-Ti_{\phi} \sigma,
\end{align}
then $\sigma$ is the solution of the equations
\eqref{obstructionequation2}.
\end{proposition}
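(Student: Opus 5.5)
The plan is to verify the two equations in \eqref{obstructionequation2} in turn. The first is immediate, and the second needs a Bochner-type argument that uses integrability and the norm bound $\|\phi\|<1$.

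\emph{Step 1: $\partial\sigma=0$.} Apply $\partial$ to \eqref{integralequation-section}. Since $\sigma_0$ is harmonic on a compact K\"ahler manifold, $\partial\sigma_0=0$. For the other term we use $\partial\bar\partial^*=-\bar\partial^*\partial$ (K\"ahler identities) and $G\partial=\partial G$:
$$\partial\bar\partial^*G\partial(\phi\lrcorner\sigma)=-\bar\partial^*G\partial\partial(\phi\lrcorner\sigma)=0.$$
Hence $\partial\sigma=0$.

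\emph{Step 2: reduce the second equation to showing that a certain form vanishes.} Set
$$\psi:=\bar\partial\sigma+\partial(\phi\lrcorner\sigma)\in A^{p,q+1}(X);$$
we want $\psi=0$. From the equation, $\bar\partial\sigma=-\bar\partial\bar\partial^*G\partial(\phi\lrcorner\sigma)$. Write $\beta:=\partial(\phi\lrcorner\sigma)$. This form is $\partial$-exact, so $\mathbb H\beta=0$, and Hodge theory gives
$$\beta=\bar\partial\bar\partial^*G\beta+\bar\partial^*\bar\partial G\beta.$$
Therefore
$$\psi=\bar\partial^*\bar\partial G\beta=\bar\partial^*G\bar\partial\partial(\phi\lrcorner\sigma).$$
Next I compute $\bar\partial\partial(\phi\lrcorner\sigma)=-\partial\bar\partial(\phi\lrcorner\sigma)$. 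For this I use the commutator $\bar\partial i_\phi-i_\phi\bar\partial=i_{\bar\partial\phi}$ (as in the proof of Corollary~\ref{commuted1}), the Maurer--Cartan equation $\bar\partial\phi=\tfrac12[\phi,\phi]$, formula \eqref{genralizedcartan_special}, and $\partial\sigma=0$. The expected outcome, after the $\partial$-exact pieces are killed by the outer $\partial$, is an identity of the form
$$\bar\partial\partial(\phi\lrcorner\sigma)=-\partial\bigl(\phi\lrcorner\psi\bigr),$$
which is the standard computation of \cite{LRY}. It gives the fixed-point relation
$$\psi=-\bar\partial^*G\partial(\phi\lrcorner\psi)=-T i_\phi\psi.$$

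\emph{Step 3: conclude $\psi=0$.} By the quasi-isometry \eqref{quasi-iso}, $\|T g\|_{L^2}\le\|g\|_{L^2}$. Pointwise, $\|\phi\lrcorner\psi\|_x\le\|\phi\|\,\|\psi\|_x$; I expect that checking this pointwise bound on forms of type $(p,q+1)$ with the correct constant, using Definition~\ref{son defn}, needs some care. Granting it,
$$\|\psi\|_{L^2}\le\|\phi\|\,\|\psi\|_{L^2}.$$
Since $\|\phi\|<1$, this forces $\psi=0$, which is the second equation of \eqref{obstructionequation2}.

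The main obstacle is the algebraic identity in Step 2: $\bar\partial\partial(\phi\lrcorner\sigma)$ must be rewritten purely as $-\partial(\phi\lrcorner\psi)$. I would organize the terms by type and apply \eqref{genralizedcartan_special} to $\sigma$, using $\partial\sigma=0$ throughout.
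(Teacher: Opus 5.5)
Your Steps 1--3 reproduce the paper's proof. Your $\psi$ is the paper's $\Phi$. The paper rewrites it as $\bar\partial^*\bar\partial G\partial(\phi\lrcorner\sigma)=-\bar\partial^*G\partial(\phi\lrcorner\Phi)$ along exactly the route you describe, and then concludes with the quasi-isometry. The identity you call the main obstacle is a one-liner: by $\bar\partial i_\phi-i_\phi\bar\partial=i_{\bar\partial\phi}$, the Maurer--Cartan equation, \eqref{genralizedcartan_special} and $\partial\sigma=0$,
\[
\bar\partial(\phi\lrcorner\sigma)=\tfrac12[\phi,\phi]\lrcorner\sigma+\phi\lrcorner\bar\partial\sigma
=\phi\lrcorner\psi-\tfrac12\,\partial(\phi\lrcorner\phi\lrcorner\sigma),
\]
and applying $\partial$ gives $\bar\partial\partial(\phi\lrcorner\sigma)=-\partial(\phi\lrcorner\psi)$.

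The real problem is the bound you chose to grant in Step 3, because it is false. In Definition~\ref{son defn}, $\|\phi\|$ is the operator norm of $\phi\colon \mathrm T^{0,1}X\to\mathrm T^{1,0}X$, and this does not control $i_\phi$ on forms with constant $1$. Here is a counterexample:
\begin{itemize}
\item Take the abelian surface $\mathbb C^2/(\mathbb Z^2+i\mathbb Z^2)$ with its flat K\"ahler form, and let $\phi=\lambda(d\bar z_1\otimes\partial_{z_1}+d\bar z_2\otimes\partial_{z_2})$ with $0<\lambda<1$.
\item This $\phi$ is integrable and $\|\phi\|=\lambda$. With $w_j=z_j+\lambda\bar z_j$ one has $\sum_j dz_j\wedge d\bar z_j=(1-\lambda^2)^{-1}\sum_j dw_j\wedge d\bar w_j$, so the K\"ahler form stays positive of type $(1,1)$. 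Hence this $\phi$ genuinely occurs in the setting of Lemma~\ref{Lemma-moduli}.
\item For $\alpha=dz_1\wedge d\bar z_2-dz_2\wedge d\bar z_1$ one finds $\phi\lrcorner\alpha=2\lambda\,d\bar z_1\wedge d\bar z_2$. So $|\phi\lrcorner\alpha|=\sqrt2\,\lambda\,|\alpha|>|\alpha|$ once $\lambda>1/\sqrt2$.
\item Set $\alpha_k=e^{2\pi i k x_1}\alpha$ with $x_1=\mathrm{Re}\,z_1$ and $k\neq 0$. Then $\|Ti_\phi\alpha_k\|_{L^2}=\sqrt2\,\lambda\,\|\alpha_k\|_{L^2}$, so $Ti_\phi$ is not even an $L^2$ contraction on $(1,1)$-forms.
\end{itemize}
This does not refute the proposition, but it shows the contraction step has no justification under the stated hypothesis.

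The paper's proof uses the same inequality, which it asserts in the proof of Lemma~\ref{Lemma-moduli}. So this is a gap you share with the paper, not a departure from it. To complete Step 3 you must assume what the argument actually uses. For example, require that the pointwise operator norm of $i_\phi$ from $(p,q+1)$-forms to $(p-1,q+2)$-forms is bounded on $X$ by a constant $<1$, or directly that $\|Ti_\phi\|<1$ on $L^2$ forms of type $(p,q+1)$. Under that hypothesis your argument, like the paper's, is complete.
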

\begin{proof}
We assume that $\sigma$ satisfies the equation $
\sigma=\sigma_0-\bar{\partial}^*G\partial \left(\phi\lrcorner
\sigma\right)$. 
Since $\sigma_0$ is harmonic, we have that $$\partial \sigma = \partial \sigma_0-\partial \bar{\partial}^*G\partial \left(\phi\lrcorner
\sigma\right)=\bar{\partial}^*G\partial^2 \left(\phi\lrcorner
\sigma\right)=0.$$
This proves the first equation of \eqref{obstructionequation2}.
Hence it remains to show that
\begin{align}\label{obstructionequation2-2}
\bar{\partial}\sigma=-\partial(\phi\lrcorner \sigma).
\end{align}
In fact, from the formulas in reviewed in Section
\ref{section-Hodge}, it follows that
\begin{align*}
\bar{\partial}\sigma&=-\bar{\partial}\bar{\partial}^*G\partial
\left(\phi\lrcorner \sigma\right)\\\nonumber
&=(\bar{\partial}^*\bar{\partial}-\triangle_{\bar{\partial}})G\partial(\phi\lrcorner
\sigma)\\\nonumber
&=(\bar{\partial}^*\bar{\partial}G-I+\mathbb{H})\partial(\phi\lrcorner
\sigma)\\\nonumber &=-\partial(\phi\lrcorner
\sigma)+\bar{\partial}^*\bar{\partial}G\partial(\phi\lrcorner
\sigma).
\end{align*}
Let
\begin{align*}
\Phi=\bar{\partial}\sigma+\partial(\phi\lrcorner \sigma).
\end{align*}
Then we have
\begin{align*}
\Phi&=\bar{\partial}\sigma+\partial(\phi\lrcorner
\sigma)\\\nonumber
&=\bar{\partial}^*\bar{\partial}G\partial(\phi\lrcorner
\sigma)\\\nonumber
&=-\bar{\partial}^*G\partial\bar{\partial}(\phi\lrcorner
\sigma)\\\nonumber
&=-\bar{\partial}^*G\partial((\bar{\partial}\phi)\lrcorner
\sigma+\phi\lrcorner \bar{\partial}\sigma)\\\nonumber
&=-\bar{\partial}^*G\partial\left(\frac{1}{2}[\phi,\phi]\lrcorner
\sigma+\phi\lrcorner \left(\Phi-\partial(\phi\lrcorner
\sigma)\right)\right)\\\nonumber
&=-\bar{\partial}^*G\partial\left(\phi\lrcorner \Phi\right),
\end{align*}
where in the last equality, we have used
(\ref{genralizedcartan_special}), $\partial \sigma=0$ and
$\partial^2=0$.

By quasi-isometry (\ref{quasi-iso}) and the condition
$\|\phi\|<1$, we have
\begin{align}
\|\Phi\|^2\leq \|\phi\lrcorner \Phi\|^2\leq
\|\phi\|\|\Phi\|^2< \|\Phi\|^2.
\end{align}
Then we get the contradiction $\|\Phi\|^2<\|\Phi\|^2$ unless
$\Phi=0$. Hence \eqref{obstructionequation2-2} is proved.
\end{proof}

Let $(X,\omega)$ be an $n$-dimensional compact
K\"ahler manifold with K\"ahler metric $\omega$,  and
$\Vert\cdot\Vert$ be the $L^{2}$-norm on smooth differential forms
$A^{p,q}(X)$ induced by the  metric $\omega$.
 Denote by $L^{p,q}_2(X)$ and $L^n_2(X)$ the $L^2$-completion of
$A^{p,q}(X)$ and $A^n(X)$ respectively. Then we have the decomposition
$$L^n_2(X) = \bigoplus_{p+q=n}L^{p,q}_2(X)$$
with Hodge filtration $F^p$ on $L^n_2(X)$ given by
$$F^pL^n_2(X) = \bigoplus_{p'+q'=n,\, p'\ge p}L^{p',q'}_2(X).$$

The quasi-isometry \eqref{quasi-iso} implies that $T=\bar{\partial}^*G\partial$ is an operator of
norm less than or equal to 1 on the Hilbert spaces of $L^2$ forms. So
we obtain
\begin{corollary} \label{Tinvertible}
Given a compact K\"ahler manifold $(X,\omega)$, let $\phi\in
A^{0,1}(X,\mathrm T^{1,0}X)$ be a Beltrami differential acting on the
Hilbert spaces of $L^2$ forms by contraction such that its
supremum operator norm $\|\phi\|<1$. Then the operator
$I+Ti_{\phi}$ is invertible on the Hilbert space of $L^2$ forms.
\end{corollary}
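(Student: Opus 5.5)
The plan is to deduce invertibility from a Neumann series argument on the Hilbert space $L^n_2(X)$, or more precisely on each $L^{p,q}_2(X)$ and their direct sum. The key is to bound the operator norm of the composition $Ti_{\phi}$ strictly below $1$.

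First, I will check that $i_\phi$ extends to a bounded operator on $L^2$ forms, with pointwise norm controlled by $\|\phi\|$. At each point $x$, $\phi$ acts on a $(p,q)$-form by contracting a $(1,0)$-slot and producing a $(0,1)$-slot. In a unitary frame adapted to $\omega$ one can use the singular value decomposition of $\phi_x\colon \mathrm T^{0,1}_x\to \mathrm T^{1,0}_x$, namely $\phi_x=\sum_i \lambda_i\, d\bar z^i\otimes \partial_i$ with $0\le\lambda_i\le \|\phi\|_{2,x}$. Then $i_{\phi}$ acts on the monomials $dz_I\wedge d\bar z_J$ diagonally up to signs, replacing one $dz_i$ by $\lambda_i d\bar z_i$.

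The main obstacle is that $i_\phi$ is a sum over $i$. A naive estimate would therefore give a pointwise bound of order $\min(p,n-q)\|\phi\|$ rather than $\|\phi\|$, and such a bound need not be below $1$. I would handle this by noting that the subsequent steps only need $\|Ti_\phi\|<1$ on the relevant spaces. On top-degree pieces, or on the image relevant to the Hodge filtration, a careful computation using orthogonality of distinct monomials should bound $|i_\phi\sigma|_x\le \|\phi\|_{2,x}|\sigma|_x$. This is exactly the pointwise bound that the paper already invokes in the proof of Proposition \ref{prop1-section} (the inequality $\|\phi\lrcorner\Phi\|^2\le\|\phi\|\,\|\Phi\|^2$). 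I will take that pointwise bound as given, as the paper does, and integrate it to get $\|i_\phi\sigma\|_{L^2}\le\|\phi\|\,\|\sigma\|_{L^2}$.

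Second, the quasi-isometry \eqref{quasi-iso} shows that $T=\bar\partial^*G\partial$ satisfies $\|Tg\|_{L^2}\le\|g\|_{L^2}$ on smooth forms. By density, $T$ extends to a contraction on $L^{p,q}_2(X)$, and hence on $L^n_2(X)$.

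Combining the two steps, $\|Ti_\phi\|_{\mathrm{op}}\le\|\phi\|<1$. Hence the Neumann series $\sum_{k\ge0}(-Ti_\phi)^k$ converges in operator norm on the Hilbert space, and it gives a bounded two-sided inverse of $I+Ti_\phi$. This also supplies the solution $\sigma=(I+Ti_\phi)^{-1}\sigma_0$ of \eqref{integralequation-section}. Smoothness of $\sigma$ is a separate elliptic regularity issue and is not part of this corollary.
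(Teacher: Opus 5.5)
Your argument follows the paper's: the quasi-isometry \eqref{quasi-iso} gives $\|T\|\le 1$, and the Neumann series then needs $\|i_\phi\|<1$ on $L^2$ forms, which the paper asserts at the end of the proof of Lemma~\ref{Lemma-moduli}. The trouble is that the step you explicitly take as given is false when $\|\phi\|$ is the supremum operator norm of Definition~\ref{son defn}, i.e.\ the top singular value of $\phi_x\colon\mathrm T^{0,1}_x\to\mathrm T^{1,0}_x$. Your suggested justification also points the wrong way. Write $\phi_x=\sum_i\lambda_i\,\bar\vartheta^i\otimes e_i$, with unitary frames chosen independently on the two factors, and let $\sigma=\theta^1\wedge\cdots\wedge\theta^n$ with $\theta^i$ dual to $e_i$. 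The $n$ terms $\lambda_i\,\bar\vartheta^i\wedge\iota_{e_i}\sigma$ of $i_\phi\sigma$ are mutually orthogonal, so $|i_\phi\sigma|_x^2=\bigl(\sum_i\lambda_i^2\bigr)|\sigma|_x^2$. Orthogonality therefore produces the Hilbert--Schmidt norm of $\phi_x$, not its operator norm, even in top degree. Concretely, on a flat torus $\C^2/\Lambda$ take $\phi=\lambda(d\bar z_1\otimes\partial_{z_1}+d\bar z_2\otimes\partial_{z_2})$ with $0<\lambda<1$. It is integrable, $\|\phi\|=\lambda$, and it is exactly of the type produced by Lemma~\ref{Lemma-moduli}, since the flat form stays K\"ahler for the coordinates $z_j+\lambda\bar z_j$. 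But $i_\phi(dz_1\wedge dz_2)=\lambda(d\bar z_1\wedge dz_2-d\bar z_2\wedge dz_1)$ has norm $\sqrt2\,\lambda\,|dz_1\wedge dz_2|$. Similarly $i_\phi(dz_1\wedge d\bar z_2-dz_2\wedge d\bar z_1)=2\lambda\,d\bar z_1\wedge d\bar z_2$ gives the ratio $\sqrt2\,\lambda$ on $(1,1)$-forms. For $\lambda>1/\sqrt2$ the contraction has $L^2$ operator norm larger than $1$, so $\|Ti_\phi\|<1$ does not follow from $\|T\|\,\|i_\phi\|$.

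So this gap is shared with the paper (the same inequality drives the contradiction argument in Proposition~\ref{prop1-section}), but it is a real gap. There are two ways to close it.

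\emph{Stronger hypothesis.} Read the hypothesis as a bound on the contraction acting on forms, $\sup_x\|i_\phi\|_{\Lambda^{p,q}_x\to\Lambda^{p-1,q+1}_x}<1$; the wording ``acting on the Hilbert spaces of $L^2$ forms by contraction'' permits this. Then your second step plus the Neumann series is a complete proof, and the singular-value discussion is unnecessary. However, this is not the hypothesis Lemma~\ref{Lemma-moduli} supplies.

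\emph{Estimate the composite.} If $\|\phi\|$ means Definition~\ref{son defn}, follow your own remark that only $\|Ti_\phi\|<1$ matters, and estimate $Ti_\phi$ directly using the structure of $T=\bar\partial^*G\partial$ rather than factoring norms. The torus example suggests this is the right target. By Fourier analysis, $Ti_\phi$ multiplies $e^{i\langle\xi,x\rangle}dz_1\wedge dz_2$ by $-\lambda(a_1^2+a_2^2)/(|a_1|^2+|a_2|^2)$, where $\xi^{1,0}=a_1dz_1+a_2dz_2$. Hence $\|Ti_\phi\|\le\lambda<1$ on $(2,0)$-forms, even though $\|i_\phi\|=\sqrt2\,\lambda$ there.

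A general estimate of this kind, or some other invertibility argument, is what is missing.
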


By Corollary \ref{Tinvertible}, equation
(\ref{integralequation-section}) has a unique solution in $L^{p,q}_2(X)$ for the given harmonic
$(p,q)$-form $\sigma_0$ on $X$, which is given by
\begin{align}\label{solution1}
\sigma=(I+Ti_{\phi})^{-1}\sigma_0=\sigma_0-Ti_{\phi}\sigma_0+\cdots +(-1)^k(Ti_{\phi})^k\sigma_0+\cdots.
\end{align}
Then, by extending Corollary~\ref{coro-obstructionequation2} and Proposition~\ref{prop1-section} to the $L^2$-completion of the space $A^{p+q}(X)$, we obtain that
$$
e^{i_{\phi}}(\sigma) \in F^p L_2^{p+q}(X_\phi)
$$
is $d$-closed.

In conclusion, we have the following theorem.
\begin{theorem}\label{solution}
Given any integrable Beltrami differential $\phi$ with the supremum operator norm
$\|\phi\|<1$, and any harmonic $(p,q)$-form $\sigma_0$ on the compact K\"ahler manifold $X$, we have that
\begin{align*}
\sigma(\phi)=e^{i_{\phi}}\left((I+Ti_{\phi})^{-1}\sigma_0\right) \in F^pL^{p+q}_2(X_\phi)
\end{align*}
is $d$-closed with $\sigma(0)=\sigma_0$.
In particular,
\begin{align*}
[\sigma(\phi)]&=[\mathbb H\sigma(\phi)]\\
&=[\sigma_0] + \left[\mathbb H\left(i_{\phi} (I+Ti_{\phi})^{-1}\sigma_0 \right)\right]+\cdots+\frac{1}{k!}\left[\mathbb H\left(i_{\phi}^k (I+Ti_{\phi})^{-1}\sigma_0 \right)\right]+\cdots
\end{align*}
as cohomological classes $[\cdot]$ in $F^pH^{p+q}(X_\phi,\C)$.
\end{theorem}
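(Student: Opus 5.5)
The statement to prove is Theorem~\ref{solution}. Most of the work is already assembled in Corollary~\ref{Tinvertible}, Proposition~\ref{prop1-section} and Corollary~\ref{coro-obstructionequation2}; what remains is to justify the passage from smooth forms to the $L^2$ solution and to read off the cohomology class.

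\textbf{Step 1: construct $\sigma$.} Since $\|\phi\|<1$, Corollary~\ref{Tinvertible} gives the solution
\begin{align*}
\sigma:=(I+Ti_{\phi})^{-1}\sigma_0\in L^{p,q}_2(X)
\end{align*}
as a convergent Neumann series. It satisfies the integral equation \eqref{integralequation-section} by construction.

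\textbf{Step 2: show $\sigma$ satisfies \eqref{obstructionequation2} weakly.} I rerun the argument of Proposition~\ref{prop1-section} in the distributional sense.
\begin{itemize}
\item First, $\partial\sigma=-\partial\bar\partial^*G\partial(i_\phi\sigma)=\bar\partial^*G\partial^2(i_\phi\sigma)=0$, using that $\partial$ anticommutes with $\bar\partial^*$ and commutes with $G$ on a K\"ahler manifold.
\item Next, set $\Phi=\bar\partial\sigma+\partial(\phi\lrcorner\sigma)$. The same chain of identities, using the Maurer--Cartan equation and \eqref{genralizedcartan_special}, gives $\Phi=-T(i_\phi\Phi)$.
\item The quasi-isometry \eqref{quasi-iso}, extended by continuity to $L^2$, then yields $\|\Phi\|\le\|\phi\|\,\|\Phi\|$, which forces $\Phi=0$.
\end{itemize}
Smoothness of $\sigma$ could alternatively be obtained by elliptic regularity: $\sigma$ solves $\bar\partial\sigma=-\partial(\phi\lrcorner\sigma)$ together with $\bar\partial^*\sigma=0$, since $\sigma-\sigma_0$ lies in the image of $\bar\partial^*$. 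This is a perturbed elliptic system with small coefficient. I would not insist on regularity, though, because the statement places $\sigma(\phi)$ in $L^2$.

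\textbf{Step 3: $d$-closedness of $\sigma(\phi)$.} Corollary~\ref{commuted1}, i.e.\ the conjugation identity $e^{-i_\phi}de^{i_\phi}=d+\partial i_\phi-i_\phi\partial$, holds on currents because $i_\phi$ has smooth coefficients. Corollary~\ref{coro-obstructionequation2} then gives $d\,e^{i_\phi}\sigma=0$. The type statement $e^{i_\phi}\sigma\in F^pL^{p+q}_2(X_\phi)$ is pointwise linear algebra from \eqref{e-sigma}. For $\phi=0$ we have $\sigma(0)=\sigma_0$ trivially.

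\textbf{Step 4: the cohomology class.} A $d$-closed $L^2$ current is cohomologous to its $\Delta_d$-harmonic projection via the Hodge decomposition for currents, so $[\sigma(\phi)]=[\mathbb H\sigma(\phi)]$. Expand $e^{i_\phi}=\sum_k\frac1{k!}i_\phi^k$; the sum is finite since $i_\phi^k=0$ for $k>p$.
\begin{itemize}
\item The $k=0$ term is $\sigma=\sigma_0-Ti_\phi\sigma$, and $\mathbb H T=\mathbb H\bar\partial^*(\cdots)=0$, so $\mathbb H\sigma=\sigma_0$.
\item The $k\ge1$ terms give $\frac1{k!}\mathbb H(i^k_\phi\sigma)$.
\end{itemize}
Summing produces the displayed formula. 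The class lies in $F^pH^{p+q}(X_\phi)$ because it is represented by the $d$-closed form $\sigma(\phi)\in F^p$ on $X_\phi$.

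\textbf{Main obstacle.} The delicate point is the $L^2$ extension in Step~2: the commutations of $G$, $\bar\partial^*$ and $\partial$, and the manipulation $\partial\bar\partial(\phi\lrcorner\sigma)$, must be justified for non-smooth $\sigma$. I would handle this by pairing against smooth test forms and moving each operator to the test form side, where all identities hold. Alternatively, I would first prove the smoothness of $\sigma$ by elliptic regularity of the system in Step~2, which removes the issue entirely.
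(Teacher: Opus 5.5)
Your overall route is the paper's. You obtain $\sigma=(I+Ti_\phi)^{-1}\sigma_0$ from Corollary~\ref{Tinvertible}, transfer Proposition~\ref{prop1-section} and Corollary~\ref{coro-obstructionequation2} to this $L^2$ solution to get $d\,e^{i_\phi}\sigma=0$, and read off the class from $\mathbb H T=0$. The only difference is cosmetic: you apply $\mathbb H T=0$ to the equation $\sigma=\sigma_0-Ti_\phi\sigma$ rather than termwise to \eqref{solution1}.

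The paper settles the $L^2$ transfer with a one-line assertion. You are right to single it out, but neither of your two remedies closes it.

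\textbf{Test-form pairing.} Pairing with test forms does justify the operator identities of Step~2, so $\Phi=-Ti_\phi\Phi$ holds as currents. However, the concluding inequality $\|\Phi\|\le\|\phi\|\,\|\Phi\|$ is an $L^2$ statement. For $\sigma\in L^2$, the current $\Phi=\bar\partial\sigma+\partial(\phi\lrcorner\sigma)$ is a priori only in $H^{-1}$. The quasi-isometry \eqref{quasi-iso} and the bound on $i_\phi$ control these operators on $L^2$ only, so nothing yet forces $\Phi=0$.

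\textbf{Elliptic regularity of the system.} This alternative is circular. The system $\bar\partial\sigma=-\partial(\phi\lrcorner\sigma)$, $\bar\partial^*\sigma=0$ already contains $\Phi=0$, which is exactly what Step~2 is meant to prove. Moreover, $\|\phi\|<1$ is not a smallness assumption. The non-circular part, $\partial\sigma=0$ and $\bar\partial^*\sigma=0$, is not an elliptic system.

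\textbf{The missing step.} Get regularity from the integral equation itself, before touching $\Phi$. On a K\"ahler manifold, $T=\bar\partial^*G\partial$ commutes with $\Delta_{\bar\partial}$. Hence $T$ is a contraction for the Sobolev norms $\|g\|_{H^s}=\|(I+\Delta_{\bar\partial})^{s/2}g\|$. The pointwise bound on $i_\phi$ that underlies Corollary~\ref{Tinvertible} gives
$$\|i_\phi g\|_{H^s}\le\|\phi\|\,\|g\|_{H^s}+C_s\|g\|_{H^{s-1}},$$
with the commutator of $(I+\Delta_{\bar\partial})^{s/2}$ and $i_\phi$ contributing the lower-order term. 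Induction on $s$, starting from the $L^2$ bound $\|(Ti_\phi)^k\sigma_0\|\le\|\phi\|^k\|\sigma_0\|$, shows that $\|(Ti_\phi)^k\sigma_0\|_{H^s}$ is bounded by a polynomial in $k$ times $\|\phi\|^k$. So the Neumann series converges in every $H^s$, and $\sigma$ is smooth. Alternatively, observe that $I+Ti_\phi$ is an elliptic zeroth-order pseudodifferential operator and apply elliptic regularity.

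Once $\sigma$ is smooth, Proposition~\ref{prop1-section} and Corollary~\ref{coro-obstructionequation2} apply verbatim. Your Steps~3 and~4 then go through with no distributional bookkeeping. As a bonus, $\sigma(\phi)$ is a smooth form in $F^pA^{p+q}(X_\phi)$, which makes $[\sigma(\phi)]\in F^pH^{p+q}(X_\phi,\mathbb C)$ immediate.
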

\begin{proof}
We only need to prove that 
$$\left[\mathbb H\left((I+Ti_{\phi})^{-1}\sigma_0 \right)\right]=[\sigma_0].$$
This follows from \eqref{solution1} and that 
$$\mathbb H (Ti_{\phi})^k= \left(\mathbb H \bar{\partial}^*G\partial \phi\right) (Ti_{\phi})^{k-1}=0.$$
Note that the representative $\mathbb H\sigma(\phi)$ of $[\sigma(\phi)]$ is a smooth form on $X_\phi$ due to the regularity of the elliptic operator $2\triangle_{\bar\partial}=\triangle_{d}$.
\end{proof}

\subsection{Closed formula for global section of Hodge bundles}\label{gsec def}
\noindent

In this subsection, we apply our closed formulas in Section \ref{subsection-extension} to get the sections of the Hodge bundles on $\tilde S$, on which we have an analytic family $\tilde f:\, \tilde\X \to \tilde S$ of polarized manifolds and a line bundle $\tilde {\mathcal L}$ on $\tilde{\X}$, such that $L_{t}= \tilde{\mathcal L}|_{X_{t}}$ for any fiber $(X_{t},L_{t})$ of $t\in \tilde S$.

Let $t_o=(X_{t_o}, L_{t_o})$ be a base point in ${\tilde{S}}$. Let $M$ be the background differential manifold of $X_{t_o}$. Let $\omega_{t_o}=\Theta(L_{t_o})$ be the corresponding K\"ahler form on $X_{t_o}$ induced by the curvature form of $L_{t_o}$, which can be considered as a symplectic form on $M$. 

For any $t=(X_{t}, L_{t})\in {\tilde{S}}$, we have a diffeomorphism
$$d_{t}:\, M\to X_{t}$$
which is not unique. Let $\omega_{t} = \Theta(L_{t})$ be the K\"ahler form on $X_{t}$. Then $d_{t}$ induces a symplectic form $d_{t}^*\omega_{t}$ on $M$.

Note that
$$[d_{t_o}^*\omega_{t_o}]=c_1(\mathcal L_T)|_{X_{t_o}} =c_1(\mathcal L_T)|_{X_{t}} =[d_{t}^*\omega_{t}]\in H^2(M).$$

By Theorem 2 of Moser in \cite{Moser}, we know that there exists a
continuous family of diffeomorphisms $f_t$ of $M$ with $t\in [0,1]$
and $f_0=\mathrm{id}$, such that $\omega_t= f_t^* d_{t_o}^*\omega_{t_o}$ and
$\omega_1 =d_{t}^*\omega_{t}$. Therefore by replacing the diffeomorphism $d_{t}$ with $d_{t}\circ f_1^{-1}$, we may assume that $d_{t}^*\omega_{t}=d_{t_o}^*\omega_{t_o}$ as symplectic forms on $M$.

\begin{lemma}\label{Lemma-moduli} Let $t_o=(X_{t_o}, L_{t_o})$ be a base point in ${\tilde{S}}$ and $t=(X_{t}, L_{t})\in {\tilde{S}}$ be any point. 
Choose a diffeomorphism $d_{t}:\, M\to X_{t}$, where $M$ is the background differential manifold of $X_{t_o}$, such that $d_{t}^*\omega_{t}=d_{t_o}^*\omega_{t_o}$ as symplectic forms on $M$.
Then the integrable almost complex structure on
$X_{t}$ is of finite distance from $X_{t_o}$. More precisely, there exists a Beltrami differential $\phi(t)\in A^{0,1}\left ( X_{t_o},\mathrm{T}^{1,0}X_{t_o}\right )$ with the supremum operator norm $\|{\phi(t)}\|<1$ as defined in \eqref{supremum operator norm}, such that $X_{t}=(X_{t_o})_{\phi(t)}$.
\end{lemma}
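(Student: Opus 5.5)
The plan is to use that, after the Moser normalization preceding the lemma, both complex structures on $M$ are compatible with one and the same symplectic form. Set $\omega:=d_{t_o}^*\omega_{t_o}=d_t^*\omega_t$, and let $J_0$ and $J_t$ be the complex structures on $M$ pulled back from $X_{t_o}$ and $X_t$. Since $\omega_{t_o}$ and $\omega_t$ are K\"ahler forms on the respective manifolds, $\omega$ is a real $(1,1)$-form for both $J_0$ and $J_t$. It is also positive for both: with the usual convention, $\sqrt{-1}\,\omega(v,\bar v)>0$ for every nonzero $v$ of type $(1,0)$, and $\sqrt{-1}\,\omega(w,\bar w)<0$ for every nonzero $w$ of type $(0,1)$, for either structure. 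Everything then reduces to pointwise linear algebra on $\mathrm T_{\C}M$ with this single Hermitian form.

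\emph{Step 1: finite distance.} We verify Definition \ref{finite dist}, i.e.\ that $\iota_1|_{\mathrm T^{1,0}X_{t}}\colon \mathrm T^{1,0}X_{t}\to \mathrm T^{1,0}X_{t_o}$ is an isomorphism. The two spaces have the same dimension, so injectivity suffices. If $v\in \mathrm T^{1,0}X_t$ and $\iota_1 v=0$, then $v\in \mathrm T^{0,1}X_{t_o}$. Positivity for $J_t$ gives $\sqrt{-1}\,\omega(v,\bar v)\ge 0$, while positivity for $J_0$ gives $\sqrt{-1}\,\omega(v,\bar v)\le 0$, with equality only when $v=0$. Hence $v=0$, and the map $\bar\phi$ of \eqref{barphi defn}, together with its conjugate $\phi=\phi(t)\in A^{0,1}(X_{t_o},\mathrm T^{1,0}X_{t_o})$, is well defined.

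\emph{Step 2: the pointwise norm is less than $1$.} By the description following \eqref{phi defn}, every element of $\mathrm T^{1,0}X_t$ has the form $u-\bar\phi(u)$ with $u\in \mathrm T^{1,0}X_{t_o}$. Expand $\sqrt{-1}\,\omega\big(u-\bar\phi(u),\overline{u-\bar\phi(u)}\big)$. Because $\omega$ is of type $(1,1)$ for $J_0$, the cross terms pairing $u$ with $\phi(\bar u)$, and $\bar u$ with $\bar\phi(u)$, vanish. What remains is
\[
\|u\|_\omega^2-\|\bar\phi(u)\|_\omega^2>0 \quad\text{for } u\neq 0 ,
\]
where the norms are those of Definition \ref{son defn}. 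Taking conjugates gives $\|\phi(\bar u)\|<\|\bar u\|$ for all nonzero $\bar u \in \mathrm T^{0,1}X_{t_o}$, so $\|\phi\|_{2,x}<1$ at every point. The ratio depends continuously on $(x,[v])$ in the compact projectivized bundle, so the supremum is attained and $\|\phi\|<1$. This also shows that $\phi\bar\phi$ has no eigenvalue $1$, so by Proposition \ref{propfinitedistance}, $\phi$ is a Beltrami differential whose associated almost complex structure is $J_t$.

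\emph{Step 3: integrability.} Since $J_t$ is integrable, $\phi$ satisfies the Maurer--Cartan equation \eqref{MCequ} by the standard correspondence, and $X_t=(X_{t_o})_{\phi(t)}$.

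The main point requiring care is the sign bookkeeping in Step 2, namely checking that the cross terms really vanish by type. The key idea, rather than an obstacle, is that the common polarization allows a single $\omega$ to be used for both structures.
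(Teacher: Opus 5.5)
Your proof is correct and follows essentially the paper's argument. Both use the single form $\omega=d_{t_o}^*\omega_{t_o}=d_t^*\omega_t$, the opposite signs of $\sqrt{-1}\,\omega(v,\bar v)$ on nonzero $(1,0)$- and $(0,1)$-vectors to get injectivity of $\iota_1|_{\mathrm T^{1,0}X_t}$, the vanishing of cross terms by type to get $\|\bar\phi(u)\|<\|u\|$ pointwise, and compactness for the supremum.

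You are slightly more careful than the paper in two ways: you make the no-eigenvalue-$1$ condition and the Maurer--Cartan equation explicit, and you stop at the operator norm of $\phi(t)$ on $\mathrm T^{0,1}$, which is all the statement asks. The paper additionally asserts that $i_{\phi(t)}$ is norm-decreasing on all of $A^{p,q}$. That claim is not needed here and does not follow from the pointwise bound: for example, $\phi=\lambda\sum_i d\bar z^i\otimes\partial_i$ has pointwise norm $|\lambda|$ but multiplies the norm of $dz^1\wedge dz^2$ by $\sqrt2\,|\lambda|$, which exceeds $1$ for $|\lambda|$ close to $1$.
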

\begin{proof}
We identify
$$\mathrm{T}^{1,0}X_{t_o}\oplus \mathrm{T}^{0,1}X_{t_o}=\mathrm{T}_{\mathbb{C}}M$$
and let 
\begin{align*} \iota_1:\, \mathrm{T}_{\mathbb{C}}M\rightarrow
\mathrm{T}^{1,0}X_{t_o}, \  \iota_2:\,\mathrm T_{\mathbb{C}}M\rightarrow \mathrm{T}^{0,1}X_{t_o},
\end{align*}
be the corresponding projection maps.
 
The diffeomorphism $d_{t}:\, M\to X_{t}$ gives the isomorphism 
$$\mathrm{T}^{1,0}X_{t}\oplus \mathrm{T}^{0,1}X_{t}=\mathrm{T}_{\mathbb{C}}M,$$
under which we have that 
$$\omega_t(v,w)=\omega_{t_o}(v,w),\,\forall\, v,w\in \mathrm{T}^{1,0}X_{t}\oplus \mathrm{T}^{0,1}X_{t}.$$
Let $$\iota_1|_{\mathrm{T}^{1,0}X_{t}}:\, \mathrm{T}^{1,0}X_{t} \to \mathrm{T}^{1,0}X_{t_o},\,\iota_2|_{\mathrm{T}^{1,0}X_{t}}:\, \mathrm{T}^{1,0}X_{t} \to \mathrm{T}^{0,1}X_{t_o},$$
be the restrictions of the projection maps.

We let $(X,\omega)$ be any K\"ahler manifold, let $g$ be the
corresponding K\"ahler metric, let $J$ be the complex structure and
let $x\in X$ be a point. Then for any vector $v\in \mathrm{T}_x^{1,0}X$ such
that $v\ne 0$, we know that
\begin{equation*}
0<\Vert v\Vert^2=g(v,\bar v)=\omega(v,J(\bar
v))=-\sqrt{-1}\omega(v,\bar v).
\end{equation*}
Namely, $-\sqrt{-1}\omega(v,\bar v)>0$. Since $\omega$ is
skew-symmetric, we have that
$$-\sqrt{-1}\omega(u,\bar u)<0$$ for any
nonzero vector $u\in \mathrm{T}_x^{0,1}X$.

Now we pick any $v\in \mathrm{T}_x^{1,0}X_{t}$ such that $v\ne 0$. By the above
argument we have
\begin{align} \label{inequv}
-\sqrt{-1}\omega_{t_o}(v,\bar v)=-\sqrt{-1}\omega_t(v,\bar v)=\omega_{t}(v,J_t(\bar v))>0,
\end{align}
where $J_t$ is the complex structure on $X_t$.

Let $v_1=\iota_1(v)\in \mathrm{T}_x^{1,0}X_{t_o}$ and $v_2=\iota_2(v)\in
\mathrm{T}_x^{0,1}X_{t_o}$. It follows from type considerations that
$$\omega_{t_o}(v_1,\bar v_2)=0=\omega_{t_o}(v_2,\bar v_1).$$ If $v_1=0$ then
$$-\sqrt{-1}\omega_{t_o}(v,\bar v)= -\sqrt{-1}\omega_{t_o}(v_2,\bar v_2)<0$$ which
is
a contradiction. Thus we know that $v_1\ne 0$ which implies that $\iota_1\mid_{\mathrm{T}_x^{1,0}X_{t}}$ is a linear isomorphism. Thus,  the
integrable almost complex structure $X_{t}$ is of finite distance from
$X_{t_o}$ in the sense of Definition \ref{finite dist}. 

By the discussion in Section \ref{subsection-Beltrami}, the
integrable almost complex structure $X_{t}$ gives an integrable
Beltrami differential ${\phi(t)} \in A^{1,0}(X_{t_o},\mathrm{T}^{1,0}X_{t_o})$
determined by the map $$\bar{{\phi(t)}}(w)=-\iota_2\circ
\left(\iota_1|_{\mathrm{T}_x^{1,0}X_{t}}\right)^{-1}(w)$$ for $w\in
\mathrm{T}_x^{1,0}X_{t_o}$, such that $X_{t}=(X_{t_o})_{\phi(t)}$. Here by definition,
$$\bar{\phi(t)}(v)= \overline{\phi(t)(\bar{v})}.$$

Let $w\in \mathrm{T}_x^{1,0}X_{t_o}$ and $v=\left(\iota_1|_{\mathrm{T}_x^{1,0}X_{t}}\right)^{-1}(w)\in \mathrm{T}_x^{1,0}X_{t}$. Then $v=w+\bar{{\phi(t)}}(w)$ and the inequality in (\ref{inequv}) implies
\begin{equation}\label{norm}
0<\sqrt{-1}\omega_{t_o}(\bar{{\phi(t)}}(w),\bar{\bar{{\phi(t)}}(w)})<-\sqrt{-1}\omega_{t_o}(w,\bar w).
\end{equation}

Let the $(\cdot,\cdot)_{x}$, $x\in X$, be the Hermitian form on $A^{p,q}(X)$, which is induced by the K\"ahler metric $\omega_{t_o}$. Then the inequality (\ref{norm}) implies that 
$$(i_{\phi(t)}\alpha,i_{\phi(t)}\alpha)_{x}<(\alpha,\alpha)_{x}, \,\forall\, \alpha \in A^{p,q}(X),$$
that is, the pointwise operator norm $\|\phi(t)\|_{2,x}<1$ on $A^{p,q}(X)$.
Since $X$ is compact, the supremum operator norm as defined in \eqref{supremum operator norm} satisfies that
$$\|{\phi(t)}\|=\sup_{x\in X}\|\phi(t)\|_{2,x}=\|\phi(t)\|_{2,x_{0}}<1,$$
for some $x_{0}\in X$,
and hence 
$$\|i_{\phi(t)}\alpha\|\le \|{\phi(t)}\|\|\alpha\| <\|\alpha\|,\, \forall\, \alpha\in A^{p,q}(X),$$
where $\|\cdot\|$ is the $L^{2}$-norm on $A^{p,q}(X)$ induced by the K\"ahler metric $\omega_{t_o}$.
This concludes the proof.
\end{proof}

For a polarized manifold $(X,L)$, we are interested in the primitive cohomology group $$H_{\mathrm{pr}}^{n}(X,\C)=\{\alpha \in H^{n}(X,\C):\, c_{1}(L)^{d-n+1}\wedge\alpha=0\}, \, n\le d=\dim_{\C} X,$$
which is isomorphic to the subspace
\begin{equation}\label{hpp}
\mathbb H_{\mathrm{pr}}^{n}(X)=\{\alpha\in \mathbb H_{\mathrm{pr}}^{n}(X):\, \omega^{d-n+1}\wedge \alpha=0\}
\end{equation}
of the space of harmonic $n$-forms, where $\omega$ is the curvature form of $L$.

Let $$\mathbb H_{\mathrm{pr}}:\, A^{n}(X)\to \mathbb H_{\mathrm{pr}}^{n}(X)$$
be the corresponding projection map.

\begin{theorem} \label{Theorem-closedsection}
Given a base point $t_o=(X_{t_o}, L_{t_o})$ in ${\tilde{S}}$ and a harmonic and primitive $(p,n-p)$-form $\sigma_0$ on $X_{t_o}$, there
is a canonical section $\Sigma$ of the Hodge bundle $\mathcal{F}^{p}$ on ${\tilde{S}}$,
such that $\Sigma(t_o)=[\sigma_0]$ and that for any point $t=(X_{t},L_{t})\in {\tilde{S}}$, $\Sigma(t)$ is represented by the de Rham cohomology class in $F^{p}H_{\mathrm{pr}}^{n}(X_{t},\C)$,
\begin{eqnarray}
[\mathbb H_{\mathrm{pr}}\sigma({\phi(t)})]&=&\left[\mathbb H_{\mathrm{pr}} \left(e^{i_{\phi(t)}}\left((I+Ti_{\phi(t)})^{-1}\sigma_0\right)\right)\right]\nonumber \\
&=&[\sigma_0] + \left[\mathbb H_{\mathrm{pr}}\left(i_{\phi(t)} (I+Ti_{\phi(t)})^{-1}\sigma_0 \right)\right]+\cdots \label{Theorem-closedsection 1}
\\
&&+\frac{1}{k!}\left[\mathbb H_{\mathrm{pr}}\left(i_{\phi(t)}^k (I+Ti_{\phi(t)})^{-1}\sigma_0 \right)\right]+\cdots \nonumber
\end{eqnarray}
where ${\phi(t)}$ is the Beltrami differential associated to $X_{t}$, and
$T=\bar{\partial}^*G\partial $ is the operator from the Harmonic theory
on $X_{t_o}$ with K\"ahler metric $\omega_{t_o}$. 
\end{theorem}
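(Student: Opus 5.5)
The plan is to define $\Sigma$ pointwise by the displayed formula and then check three things: that it is well defined, that its value at each $t$ lies in $F^pH^n_{\mathrm{pr}}(X_t,\C)$, and that $\Sigma(t_o)=[\sigma_0]$. For each $t\in\tilde S$, Lemma \ref{Lemma-moduli} gives an integrable Beltrami differential $\phi(t)$ with $X_t=(X_{t_o})_{\phi(t)}$ and $\|\phi(t)\|<1$. Here we use the diffeomorphism $d_t$ normalized by Moser's theorem so that $d_t^*\omega_t=d_{t_o}^*\omega_{t_o}$. At $t=t_o$ we take $d_{t_o}=\mathrm{id}$, so $\phi(t_o)=0$. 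With $\phi(t)$ fixed, Corollary \ref{Tinvertible} makes $(I+Ti_{\phi(t)})^{-1}\sigma_0$ well defined in $L^{p,n-p}_2(X_{t_o})$. Theorem \ref{solution} then shows that
\[
\sigma(\phi(t))=e^{i_{\phi(t)}}\bigl((I+Ti_{\phi(t)})^{-1}\sigma_0\bigr)\in F^pL_2^{n}(X_t)
\]
is $d$-closed. Hence it defines a de Rham class in $F^pH^n(X_t,\C)$, with smooth harmonic representative $\mathbb H\sigma(\phi(t))$ by elliptic regularity. The expansion \eqref{Theorem-closedsection 1} then follows termwise from $e^{i_\phi}=\sum_k i_\phi^k/k!$, together with $\mathbb H(Ti_\phi)^k=0$ for $k\ge1$, which gives $[\mathbb H(I+Ti_\phi)^{-1}\sigma_0]=[\sigma_0]$.

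Next I would handle primitivity. The class must land in $F^pH^n_{\mathrm{pr}}(X_t,\C)$, so $\mathbb H_{\mathrm{pr}}$ should only remove the non-primitive Lefschetz components. Since $d_t^*\omega_t=\omega_{t_o}$, the class $[\omega_t]$ is identified with $[\omega_{t_o}]$ in $H^2(M)$. Therefore the Lefschetz decomposition of $H^n(M,\C)$ is the same at $t$ and at $t_o$, and the primitive projection is a fixed linear map on $H^n(M,\C)$. This map is induced on harmonic representatives at $t_o$ by $\mathbb H_{\mathrm{pr}}$. The point to check is that it preserves $F^p_t$. Because $\omega_t$ is a $(1,1)$-form on $X_t$, cup product with $[\omega_t]$ and the Lefschetz operators $L,\Lambda$ (equivalently the $\mathfrak{sl}_2$-action) are morphisms of Hodge structures of the appropriate type for $X_t$. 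The Lefschetz projections are polynomials in $L$ and $\Lambda$, so they preserve the Hodge filtration $F^\bullet_t$, and $[\mathbb H_{\mathrm{pr}}\sigma(\phi(t))]\in F^pH^n_{\mathrm{pr}}(X_t,\C)$. At $t_o$ we get $\Sigma(t_o)=[\mathbb H_{\mathrm{pr}}\sigma_0]=[\sigma_0]$, because $\sigma_0$ is already harmonic and primitive.

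I expect the main obstacle to be canonicity and well-definedness: the choice of $d_t$ is not unique, so $\phi(t)$ is not unique either. I would first use simple connectivity of $\tilde S$. Via the Gauss--Manin connection it fixes a canonical identification $H^n(X_t,\C)\cong H^n(M,\C)$, and we only use isotopy classes of $d_t$ compatible with this transport. Two admissible choices then differ by a symplectomorphism $\psi$ of $(M,\omega_{t_o})$ acting trivially on cohomology. The two Beltrami differentials describe the same complex manifold $X_t$, and the two $d$-closed forms $\sigma(\phi)$ and $\psi^*\sigma(\phi')$ both represent the unique class in $F^p_t$ built from $\sigma_0$. I would justify this uniqueness by the type-comparison argument: in both cases the $(p,n-p)$-harmonic component relative to $X_{t_o}$ equals $\sigma_0$, with the remaining components in lower Hodge types at $t_o$.

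The resulting cohomology class is therefore independent of the choice, and $\Sigma$ is a well-defined section of $\mathcal F^p$. The claim that it is holomorphic would not be addressed here; it is what Theorem \ref{intr main bundle} later establishes, by comparison with the sections $\Omega_{(p)}$ on $\tilde S^\vee$ and the Riemann extension theorem.
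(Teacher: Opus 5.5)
Your proposal is correct and follows the paper's route: Lemma \ref{Lemma-moduli} produces $\phi(t)$ with $\|\phi(t)\|<1$, Theorem \ref{solution} gives the $d$-closed extension, the primitive projection is compatible with $F^\bullet_t$, and simple connectivity of $\tilde S$ gives independence from the choice of $d_t$. Your Lefschetz argument (the primitive projection on $H^n(M,\C)$ depends only on $[\omega_{t_o}]=[\omega_t]$ and is a morphism of Hodge structures for $X_t$) and your uniqueness argument make explicit two steps the paper only asserts. When you write up the uniqueness step, state that it needs $F^p_t\cap\bigoplus_{k\ge1}H^{p-k,n-p+k}(X_{t_o})=0$. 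This is essentially the transversality underlying Theorem \ref{main}, so it must come from the construction itself: apply it, for one fixed $\phi(t)$, to harmonic forms of every type $(p',n-p')$ with $p'\ge p$, and count dimensions as in Lemma \ref{def basis}.
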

\begin{proof}
Choose a diffeomorphism $d_{t}:\, M\to X_{t}$ as in Lemma \ref{Lemma-moduli}.
Then by Lemma \ref{Lemma-moduli}, we know that associated to the
complex structure on $X_{t}$, there exists the Beltrami differential
${\phi(t)}\in A^{0,1}(X_{t_o},\mathrm{T}^{1,0}X_{t_o})$, with $\|{\phi(t)}\|<1$, where the
norm is taken with respect to the metric $\omega_{t_o}$ on $X_{t_o}$.

From Corollary \ref{solution}, since $\|{\phi(t)}\|<1$ on $X_{t_o}$, we
deduce that given the harmonic $(p,n-p)$-form $\sigma_0$, there exist extensions of $\sigma_0$ as $d$-closed $n$-form 
$$\sigma({\phi(t)})=e^{i_{\phi(t)}}\left((I+Ti_{\phi(t)})^{-1}\sigma_0\right).$$

The simply-connectedness of ${\tilde{S}}$ and Ehresmann's theorem imply that de Rham cohomology class $[\sigma({\phi(t)})]\in d_{t_o}^*( F^{p}H^{n}(X_{t},\C))$ is independent of the choice of the diffeomorphism $d_{t}:\, M\to X_{t}$, which we simply denote by $[\sigma({\phi(t)})]\in  F^{p}H^{n}(X_{t},\C)$.

Note that the harmonic projection map \eqref{hpp} onto the subspace of harmonic and primitive forms is compatible with the Hodge filtrations,
$$\mathrm H_{\mathrm{pr}}:\, F^{p}A^{n}(X_{t})\to F^{p}\mathbb H_{\mathrm{pr}}^{n}(X_{t}).$$
Hence $$[\mathbb H_{\mathrm{pr}}\sigma({\phi(t)})]\in F^{p}H_{\mathrm{pr}}^{n}(X_{t},\C).$$

Therefore the formula \eqref{Theorem-closedsection 1} holds on ${\tilde{S}}$.
\end{proof}

%
%
%
%


\section{A conjecture of Griffiths}\label{main proof}
In this section, we establish a partial result towards Griffiths’ Conjecture 10.1 in \cite{Griffiths4}, using the sections of the Hodge bundles constructed via the period map and deformation theory.

Now we take the blocks of the adapted basis $\eta$ of the Hodge decomposition at the base point $t_o=(X_{t_o},L_{t_o})$ in $\tilde S$ as
$$\eta=\{\eta_{(0)}^T,\eta_{(1)}^T,\cdots,\eta_{(n)}^T\}^T,$$
where 
$$\eta_{(p)}=[\tilde\eta_{(p)}]=\{[\tilde\eta_{f^{-p +n+1}}], \cdots, [\tilde\eta_{f^{-p +n}-1}]\}^T$$ 
is represented by harmonic $(n-p,p)$-forms 
$$\tilde\eta_{i}\in \mathbb H_{\mathrm{pr}}^{n-p,p}(X_{t_o}),\,f^{-p +n+1} \le i\le f^{-p +n}-1,$$
for $0\le p \le n$.

For any point $t=(X_{t},L_{t})\in {\tilde{S}}$, Theorem \ref{Theorem-closedsection} implies that there exist sections $\tO_{(p)}$ of Hodge bundles $\mathcal{F}^{n-p}$ on ${\tilde{S}}$, $0\le p \le n$, given by
\begin{align}
\tO_{(p)}(t)=&\left[\mathbb H_{\mathrm{pr}} \left(e^{i_{\phi(t)}}\left((I+Ti_{\phi(t)})^{-1}\tilde\eta_{(p)}\right)\right)\right]\nonumber \\
=&\eta_{(p)} + \left[\mathbb H_{\mathrm{pr}}\left(i_{\phi(t)} (I+Ti_{\phi(t)})^{-1}\tilde\eta_{(p)} \right)\right]+\cdots \label{def section} \\
&\, \, +\frac{1}{k!}\left[\mathbb H_{\mathrm{pr}}\left(i_{\phi(t)}^k (I+Ti_{\phi(t)})^{-1}\tilde\eta_{(p)} \right)\right]+\cdots \nonumber 
\end{align}
with initial data $\tO_{(p)}(t_0)=\eta_{(p)}$,
where $\phi(t)$ is the Beltrami differential associated to $X_{t}$. 

Note that as cohomological classes at the base point $X_{t_o}$, the summands of $\tO_{(\alpha)}(t)$ satisfy that
\begin{eqnarray*}
  \left[\mathbb H_{\mathrm{pr}}\left(i_{\phi(t)}^k (I+Ti_{\phi}(t))^{-1}\tilde\eta_{(p)} \right)\right]&\in & H_{\mathrm{pr}}^{n-p-k,p+k}(X_{t_o})
\end{eqnarray*}
for $k\ge 1$.

\begin{lemma}\label{def basis}
  The sections $\tO_{(0)}(t),\cdots,\tO_{(p)}(t)$ given in \eqref{def section} is a basis of $F^{n-p} H_{\mathrm{pr}}^n(X_{t},\C)$.
\end{lemma}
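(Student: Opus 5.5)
The plan has two parts: show the listed sections lie in $F^{n-p}H_{\mathrm{pr}}^n(X_t,\C)$, then show they are linearly independent, and conclude by a dimension count. First, Theorem \ref{Theorem-closedsection} applied to each harmonic primitive form $\tilde\eta_i$ of type $(n-q,q)$ gives $\tO_{(q)}(t)\in F^{n-q}H_{\mathrm{pr}}^n(X_t,\C)$. Since $F^{n-q}\subseteq F^{n-p}$ for $q\le p$, all of $\tO_{(0)}(t),\cdots,\tO_{(p)}(t)$ lie in $F^{n-p}H_{\mathrm{pr}}^n(X_t,\C)$. The total number of these vectors is $h^{n,0}+\cdots+h^{n-p,p}=f^{n-p}$. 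This equals $\dim_\C F^{n-p}H_{\mathrm{pr}}^n(X_t,\C)$, because the Hodge numbers are constant in the family, as $\Phi(t)\in D$. It therefore suffices to prove linear independence.

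For linear independence we work in the fixed vector space $H=H_{\mathrm{pr}}^n(X_{t_o},\C)$, to which all the classes are transported via $d_t$ (or Ehresmann). There we decompose with respect to the Hodge decomposition at the base point. By the remark preceding the lemma, the $k$-th summand of $\tO_{(q)}(t)$ lies in $H_{\mathrm{pr}}^{n-q-k,q+k}(X_{t_o})$ for $k\ge1$. Hence
$$\tO_{(q)}(t)=\eta_{(q)}+\big(\text{terms in }\textstyle\bigoplus_{k\ge1}H_{\mathrm{pr}}^{n-q-k,q+k}(X_{t_o})\big).$$
The transition matrix from $\eta$ to $\{\tO_{(0)}(t)^T,\cdots,\tO_{(p)}(t)^T\}^T$ is therefore block upper triangular, with identity blocks on the diagonal. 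Concretely, suppose $\sum_{q\le p}c_q\cdot\tO_{(q)}(t)=0$, where the $c_q$ are row vectors. Projecting onto $H_{\mathrm{pr}}^{n,0}(X_{t_o})$ gives $c_0\cdot\eta_{(0)}=0$, so $c_0=0$. Projecting next onto $H_{\mathrm{pr}}^{n-1,1}(X_{t_o})$ gives $c_1=0$. Inductively, projecting onto $H_{\mathrm{pr}}^{n-q,q}(X_{t_o})$ gives $c_q=0$, since only $\tO_{(q')}$ with $q'\le q$ contribute there and those with $q'<q$ already have zero coefficient.

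The main point needing care is the type claim itself: that $\left[\mathbb H_{\mathrm{pr}}\left(i_{\phi(t)}^k(I+Ti_{\phi(t)})^{-1}\tilde\eta_{(q)}\right)\right]$ is of pure type $(n-q-k,q+k)$ at $t_o$. I would verify it as follows. The operator $T=\bar\partial^*G\partial$ preserves bidegree on $X_{t_o}$, while $i_{\phi(t)}$ shifts bidegree by $(-1,+1)$. So $(I+Ti_{\phi(t)})^{-1}\tilde\eta_{(q)}$ has type $(n-q,q)$, and applying $i_{\phi(t)}^k$ gives type $(n-q-k,q+k)$. Finally, the harmonic and primitive projection $\mathbb H_{\mathrm{pr}}$ on the Kähler manifold $X_{t_o}$ preserves bidegree, because $\Delta_d=2\Delta_{\bar\partial}$ and Lefschetz primitivity are compatible with types.

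The remaining subtlety is the identification of classes in $H^n(X_t)$ with classes in $H$, and this is exactly the identification used in Theorem \ref{Theorem-closedsection}, so no further work is needed there.
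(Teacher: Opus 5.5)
Your proof is correct and follows the paper's argument: a dimension count reduces the lemma to linear independence, and linear independence comes from the block upper triangular form of $\tO_{(0)}(t),\cdots,\tO_{(p)}(t)$ with respect to the Hodge decomposition at $t_o$, giving a triangular system solved inductively starting from the $H_{\mathrm{pr}}^{n,0}(X_{t_o})$ component. One small slip in your type verification (the paper's statement $T: A^{p,q}\to A^{p,q}$ has the same slip): $T=\bar\partial^*G\partial$ does not preserve bidegree but shifts it by $(+1,-1)$, so it is the composite $Ti_{\phi(t)}$ that preserves bidegree, and this is exactly what you need for $(I+Ti_{\phi(t)})^{-1}\tilde\eta_{(q)}$ to remain of type $(n-q,q)$.
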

\begin{proof}
  By considering the dimensions, we only need to show that $\tO_{(0)}(t),\cdots,\tO_{(p)}(t)$ are linearly independent. 
  
Let $\alpha_{(0)}(t), \cdots, \alpha_{(p)}(t)$ be the row vectors such that
$$\alpha_{(0)}(t)\tO_{(0)}(t)+ \cdots + \alpha_{(p)}(t)\tO_{(p)}(t)=0.$$
Then by comparing types of the following equation in $H_{\mathrm{pr}}^{n,0}(X_{t_o}),\cdots, H_{\mathrm{pr}}^{n-p,p}(X_{t_o})$, 
\begin{eqnarray*}
0&=&\sum_{0\le i\le p}\alpha_{(i)}\left\{\eta_{(i)}+\sum_{1\le k\le n-i}\frac{1}{k!}\left[\mathbb H_{\mathrm{pr}}\left(i_{\phi(t)}^k (I+Ti_{\phi(t)})^{-1}\tilde\eta_{(i)} \right)\right]\right\}\\
&=&\sum_{0\le i\le p}\left\{\alpha_{(i)}\eta_{(i)}+\sum_{1\le k\le i}\frac{1}{k!}\alpha_{(i-k)}\left[\mathbb H_{\mathrm{pr}}\left(i_{\phi(t)}^k (I+Ti_{\phi(t)})^{-1}\tilde\eta_{(i-k)} \right)\right]\right\}\\
&&+\sum_{0\le i\le p}\alpha_{(i)}\sum_{p-i< k\le n-i}\frac{1}{k!}\left[\mathbb H_{\mathrm{pr}}\left(i_{\phi(t)}^k (I+Ti_{\phi(t)})^{-1}\tilde\eta_{(i)} \right)\right],
\end{eqnarray*}
we have that
\begin{eqnarray*}
\left\{ \begin{aligned}& \alpha_{(0)}(t)\eta_{(0)} =0\\
&\alpha_{(1)}\eta_{(1)}+\alpha_{(0)}\left[\mathbb H_{\mathrm{pr}}\left(i_{\phi(t)} (I+Ti_{\phi}(t))^{-1}\tilde\eta_{(0)} \right)\right] = 0\\
&\dots \dots\\
&\alpha_{(p)}\eta_{(p)}+\sum_{1\le k\le p}\frac{1}{k!}\alpha_{(p-k)}\left[\mathbb H_{\mathrm{pr}}\left(i_{\phi(t)}^k (I+Ti_{\phi(t)})^{-1}\tilde\eta_{(p-k)} \right)\right]=0.
\end{aligned} \right.
\end{eqnarray*}
Hence, by induction, we conclude that $\alpha_{(0)}(t)=0, \cdots, \alpha_{(p)}(t)=0$, which completes the proof of the lemma.
\end{proof}

When $t\in {\tilde{S}}^{\vee}$, we have constructed the sections 
\begin{equation}\label{lm section'}
  \Omega_{(p)}(t)=\eta_{(p)}+\sum_{k\ge 1} \Phi^{(p,p+k)}(t)\cdot \eta_{(\beta)},\,0\le p \le n
\end{equation}
of Hodge bundles $\mathcal F^{n-p}$ in \eqref{lm section} using the matrix representation of the image $\P(t)$ in $N_-$, $$\P(t)=\left(\Phi^{(p,q)}(t)\right)_{0\le p, q\le n}\in N_-\cap D.$$

\begin{lemma}\label{def=hodge}
The sections $\Omega_{(p)}$ and $\tO_{(p)}$ of the Hodge bundle $\mathcal F^{n-p}$ coincide on ${\tilde{S}}^{\vee}$ for all $0\le p \le n$.
\end{lemma}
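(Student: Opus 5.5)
The plan is to compare the two sections type by type, using the fact that both are normalized in the same way relative to the base Hodge decomposition at $t_o$. Fix $t\in\tilde S^{\vee}$ and $0\le p\le n$. By Lemma \ref{def basis}, $\tO_{(0)}(t),\dots,\tO_{(p)}(t)$ form a basis of $F^{n-p}_t$. By the construction in \eqref{lm section}, $\Omega_{(0)}(t),\dots,\Omega_{(p)}(t)$ also form a basis of $F^{n-p}_t$. Hence there are matrices $B_{(0)}(t),\dots,B_{(p)}(t)$ with
\[
\tO_{(p)}(t)=\sum_{i=0}^{p}B_{(i)}(t)\cdot\Omega_{(i)}(t).
\]
The goal is to show that $B_{(p)}(t)=I$ and $B_{(i)}(t)=O$ for $i<p$.

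The key step is to expand both sides in the Hodge decomposition $H=\bigoplus_q H_{\mathrm{pr}}^{n-q,q}(X_{t_o})$ at the base point and then compare components of type $(n-q,q)$ for $q\le p$. On the left, by \eqref{def section} and the type remark following it, $\tO_{(p)}(t)$ equals $\eta_{(p)}$ plus terms lying in $\bigoplus_{k\ge1}H_{\mathrm{pr}}^{n-p-k,p+k}(X_{t_o})$. So its components of type $(n-q,q)$ vanish for $q<p$, and its component of type $(n-p,p)$ is exactly $\eta_{(p)}$. On the right, by \eqref{lm section}, $\Omega_{(i)}(t)=\eta_{(i)}+\sum_{k\ge1}\Phi^{(i,i+k)}(t)\eta_{(i+k)}$. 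The $(n-q,q)$-component of the right side is therefore $B_{(q)}\eta_{(q)}+\sum_{i<q}B_{(i)}\Phi^{(i,q)}(t)\eta_{(q)}$.

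Now I compare components inductively in $q$, exactly as in the proofs of Lemma \ref{lm derivative lemma} and Lemma \ref{def basis}. At $q=0$ the comparison gives $B_{(0)}=O$. Assuming $B_{(0)}=\dots=B_{(q-1)}=O$, the $(n-q,q)$-component gives $B_{(q)}=O$ for each $q<p$. At $q=p$ it then gives $B_{(p)}\eta_{(p)}=\eta_{(p)}$, so $B_{(p)}=I$. This yields $\tO_{(p)}(t)=\Omega_{(p)}(t)$.

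The step that needs the most care is justifying the type claim for $\tO_{(p)}$, namely that the $k$-th summand $\frac1{k!}[\mathbb H_{\mathrm{pr}}(i_{\phi(t)}^k(I+Ti_{\phi(t)})^{-1}\tilde\eta_{(p)})]$ has pure type $(n-p-k,p+k)$ at $t_o$. This holds because $T=\bar\partial^*G\partial$ preserves bidegree and $i_{\phi}$ shifts it by $(-1,+1)$. It follows that $(I+Ti_\phi)^{-1}\tilde\eta_{(p)}$ is of type $(n-p,p)$, and that $\mathbb H_{\mathrm{pr}}$ preserves type on the Kähler manifold $X_{t_o}$. A further point to check is that the decomposition $H=\bigoplus_q H_{\mathrm{pr}}^{n-q,q}(X_{t_o})$ is direct. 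This is what makes the component-by-component comparison legitimate, and it also identifies the cohomology classes on $X_t$ with classes at $t_o$ via the flat (Gauss--Manin) trivialization on simply connected $\tilde S$ used in Theorem \ref{Theorem-closedsection}.
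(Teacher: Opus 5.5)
Your argument is correct and is essentially the paper's. The paper writes $\Omega_{(p)}(t)=\sum_{i\le p}A_{(i)}(t)\tO_{(i)}(t)$ using Lemma~\ref{def basis} and solves the same block-triangular system by comparing $(n-q,q)$-components at $t_o$. You expand in the opposite direction, which needs only $\tO_{(p)}(t)\in F^{n-p}_t$ (Theorem~\ref{Theorem-closedsection}) and the fact that $\Omega_{(0)}(t),\dots,\Omega_{(p)}(t)$ span $F^{n-p}_t$ on $\widetilde S^{\vee}$, so your appeal to Lemma~\ref{def basis} is not actually needed.

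One correction to your justification of the type claim. $T=\overline\partial^*G\partial$ does not preserve bidegree: it sends $(a,b)$-forms to $(a+1,b-1)$-forms, and it is the composite $Ti_{\phi(t)}$ that preserves bidegree. With your premises as written, $Ti_{\phi(t)}$ would shift bidegree by $(-1,+1)$, and $(I+Ti_{\phi(t)})^{-1}\tilde\eta_{(p)}$ would not be of pure type $(n-p,p)$.
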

\begin{proof}
  Let $t\in {\tilde{S}}^{\vee}$ be any point.
  From Lemma \ref{def basis}, there exist matrices $A_{(0)}(t), \cdots, A_{(p)}(t)$ such that
\begin{equation}\label{O=tilde O}
\Omega_{(p)}(t)= A_{(0)}(t)\tO_{(0)}(t)+ \cdots +A_{(p)}(t)\tO_{(p)}(t).
\end{equation}
Then the left-hand side equals
$$\eta_{(p)}+\sum_{k\ge 1} \Phi^{(p,p+k)}(t)\cdot \eta_{(\beta)},$$
while the right-hand side equals
\begin{eqnarray*}
&&\sum_{0\le i\le p}A_{(i)}\left\{\eta_{(i)}+\sum_{1\le k\le n-i}\frac{1}{k!}\left[\mathbb H_{\mathrm{pr}}\left(i_{\phi(t)}^k (I+Ti_{\phi(t)})^{-1}\tilde\eta_{(i)} \right)\right]\right\}\\
&=&\sum_{0\le i\le p}\left\{A_{(i)}\eta_{(i)}+\sum_{1\le k\le i}\frac{1}{k!}A_{(i-k)}\left[\mathbb H_{\mathrm{pr}}\left(i_{\phi(t)}^k (I+Ti_{\phi(t)})^{-1}\tilde\eta_{(i-k)} \right)\right]\right\}\\
&&+\sum_{0\le i\le p}A_{(i)}\sum_{p-i< k\le n-i}\frac{1}{k!}\left[\mathbb H_{\mathrm{pr}}\left(i_{\phi(t)}^k (I+Ti_{\phi(t)})^{-1}\tilde\eta_{(i)} \right)\right].
\end{eqnarray*}
Hence by comparing types of \eqref{O=tilde O} in $H^{n,0}(X_{t_o}),\cdots, H^{n-p,p}(X_{t_o})$, we have that 
$$\eta_{(p)}=\sum_{0\le i\le p}\left\{A_{(i)}\eta_{(i)}+\sum_{1\le k\le i}\frac{1}{k!}A_{(i-k)}\left[\mathbb H_{\mathrm{pr}}\left(i_{\phi(t)}^k (I+Ti_{\phi(t)})^{-1}\tilde\eta_{(i-k)} \right)\right]\right\},$$
which implies that
 $$ A_{(0)}(t)=O,\cdots, A_{(p-1)}(t)=O, A_{(p)}(t)=I.$$
 Therefore $\Omega_{(p)}(t)=\tO_{(p)}(t)$ for any $t\in \tilde S^{\vee}$ and $0\le p\le n$.
\end{proof}

Now we can prove our first main result.

\begin{theorem}\label{main}
Let $f:\, \mathcal X \to S$ be an analytic family of polarized manifolds over a complex manifold $S$ and $\pi:\,\tilde S\to S$ be the universal cover. Then the image of the lifted period map 
$$\Phi:\,{\tilde{S}}\to D$$
lies in the complex Euclidean space $N_-$.
\end{theorem}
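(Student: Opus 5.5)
The plan is to show that at every point $t\in\tilde S$ the Hodge filtration $\P(t)$ has an adapted basis whose transition matrix to the base basis $\eta$ is block upper triangular with identity diagonal blocks. By Lemma \ref{transversal} and the description of $N_-$ as the unipotent orbit of $o$, this gives $\P(t)\in N_-$. The needed basis is the deformation-theoretic family $\tO_{(p)}(t)$ of \eqref{def section}, which Theorem \ref{Theorem-closedsection} provides on all of $\tilde S$, not only on $\tilde S^\vee$.

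\emph{Step 1 (identification of cohomologies).} Since $\tilde S$ is simply connected, Gauss--Manin parallel transport identifies $H_{\mathrm{pr}}^n(X_t,\C)$ with $H=H_{\mathrm{pr}}^n(X_{t_o},\C)$ unambiguously. The proof of Theorem \ref{Theorem-closedsection} uses exactly this identification, via Ehresmann's theorem and the independence of $[\sigma(\phi(t))]$ from the choice of $d_t$. Hence $\tO_{(p)}(t)$, read in $H$, lies in $F^{n-p}_t$, where $\P(t)=(F^\bullet_t)$ is the lifted period point.

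\emph{Step 2 (type decomposition).} Write each $\tO_{(p)}(t)$ in the base Hodge decomposition. By construction, the summand $\frac{1}{k!}[\mathbb H_{\mathrm{pr}}(i_{\phi(t)}^k(I+Ti_{\phi(t)})^{-1}\tilde\eta_{(p)})]$ lies in $H_{\mathrm{pr}}^{n-p-k,p+k}(X_{t_o})$ for $k\ge1$. It also uses $\mathbb H(Ti_\phi)^k=0$, as in the proof of Theorem \ref{solution}. So the $(n-p,p)$-component is exactly $\eta_{(p)}$ and there are no components of type $(n-i,i)$ with $i<p$. Writing $\tO_{(p)}(t)=\eta_{(p)}+\sum_{k\ge1}B^{(p,p+k)}(t)\eta_{(p+k)}$ therefore defines a matrix $B(t)$ for every $t\in\tilde S$, block upper triangular with identity diagonal blocks.

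\emph{Step 3 (conclusion).} By Lemma \ref{def basis}, for each $p$ the vectors $\tO_{(0)}(t),\dots,\tO_{(p)}(t)$ form a basis of $F^{n-p}_t$. So $B(t)$ is the transition matrix $A(t)$ of Lemma \ref{transversal} for this adapted basis. Its leading principal block minors are determinants of identity-block unipotent matrices, hence equal to $1\neq0$, and Lemma \ref{transversal} gives $\P(t)\in N_-$. Equivalently, $\tilde S\setminus\tilde S^\vee=\emptyset$. As a consistency check, Lemma \ref{def=hodge} shows $B(t)=\P(t)$ on $\tilde S^\vee$, so $B$ is the holomorphic extension of the blocks $\Phi^{(p,q)}$ across the analytic subvariety of Proposition \ref{codimension}, in accordance with Theorem \ref{intr main bundle}.

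The main obstacle is Step 1 together with the validity of Lemma \ref{def basis} at \emph{every} $t$, including points of $\tilde S\setminus\tilde S^\vee$. This rests on Lemma \ref{Lemma-moduli} providing an integrable $\phi(t)$ with $\|\phi(t)\|<1$ globally, so that $(I+Ti_{\phi(t)})^{-1}$ exists by Corollary \ref{Tinvertible}. It also needs $[\mathbb H_{\mathrm{pr}}\sigma(\phi(t))]$ to land in $F^{n-p}_t$ compatibly with the lift $\P$. I would verify this carefully via the Moser-normalized diffeomorphisms $d_t$, checking that the resulting identification agrees with the monodromy-free transport defining $\P$ on the universal cover.
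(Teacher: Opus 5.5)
Your argument is correct relative to the paper's earlier results. It rests on the same key input as the paper's proof: the deformation-theoretic sections $\tO_{(p)}(t)$ of Theorem~\ref{Theorem-closedsection} exist at every $t\in\tilde S$ and have a block-unipotent expansion in the base Hodge decomposition. You finish by a more direct route, though. The paper works through $\tilde S^{\vee}$: for a given $t$ it chooses a path $\gamma$ with $\gamma([0,1))\subset\tilde S^{\vee}$ (via Proposition~\ref{codimension}), identifies the $N_-$-blocks of $\P(\gamma(s))$ with the coefficients of $\tO_{(p)}(\gamma(s))$ by Lemma~\ref{def=hodge}, and concludes from the global existence of $\tO_{(p)}$ that these blocks have finite limits as $s\to 1$. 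You instead apply Lemma~\ref{def basis} and the minor criterion of Lemma~\ref{transversal} at $t$ itself. This dispenses with Proposition~\ref{codimension}, the path and Lemma~\ref{def=hodge}. More importantly, it dispenses with any regularity of $s\mapsto\tO_{(p)}(\gamma(s))$ at $s=1$. The paper's ``can be continued to $s=1$'' tacitly needs the Moser-normalized $d_t$, hence $\phi(t)$ and $(I+Ti_{\phi(t)})^{-1}$, to depend continuously on $t$, which the paper does not address; your argument is purely pointwise. The detour through $\tilde S^{\vee}$ reflects the picture of Theorem~\ref{intr main bundle}, namely that the $N_-$-coordinates of $\P$, a priori defined only on $\tilde S^{\vee}$, extend across an analytic subset. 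It is not logically needed for Theorem~\ref{main}. Both proofs stand or fall with Theorem~\ref{Theorem-closedsection} at arbitrary $t$, which you rightly single out. When you check it, note that $\|\phi(t)\|<1$ as a map $\mathrm{T}^{0,1}\to\mathrm{T}^{1,0}$ does not by itself give $\|i_{\phi(t)}\alpha\|<\|\alpha\|$ on $(n-p,p)$-forms with $n-p\ge 2$. For example, $\phi=\lambda\sum_j d\bar z_j\otimes\partial_j$ gives $|i_\phi(dz_1\wedge dz_2)|=\sqrt{2}\,|\lambda|\,|dz_1\wedge dz_2|$, which exceeds $|dz_1\wedge dz_2|$ once $|\lambda|>1/\sqrt{2}$. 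So the invertibility of $I+Ti_{\phi(t)}$ in Corollary~\ref{Tinvertible} for $t$ far from $t_o$ is the point that genuinely needs an argument.
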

\begin{proof}
Let $t\in {\tilde{S}}$ be any point.
From Proposition \ref{codimension}, ${\tilde{S}}^{\vee}\subseteq {\tilde{S}}$ is open dense in ${\tilde{S}}$ with ${\tilde{S}}\setminus {\tilde{S}}^{\vee}$ as an analytic subset of ${\tilde{S}}$, we can find a curve $\gamma:\, [0, 1]\to {\tilde{S}}$ such that $\gamma(0)=t_o$, $\gamma(1)=t$ and $\gamma([0,1))\subset {\tilde{S}}^{\vee}$. 

By the definition of ${\tilde{S}}^{\vee}$, we have that $\P(\gamma(s))\in N_-\cap D$ for any $s\in [0,1)$. We denote
$$\P(\gamma(s))=(\Phi^{(p,q)}(s))_{0\le p,q\le n}\in N_{-},\, s\in [0,1)$$
In the following blocks of $\P(\gamma(s))$ as $s\to 1$,  
\begin{align*}
\P(\gamma(s))=\left(\begin{array}[c]{ccccccc}
I & \Phi^{(0,1)}(s) & \Phi^{(0,2)}(s) & \cdots& \Phi^{(0,n-1)}(s) & \Phi^{(0,n)}(s)\\ 
0& I& \Phi^{(1,2)}(s)&\cdots &\Phi^{(1,n-1)}(s)& \Phi^{(1,n)}(s)\\ 
0& 0&I &\cdots &\Phi^{(2,n-1)}(s)&\Phi^{(2,n)}(s) \\ 
\vdots &\vdots & \vdots& \ddots& \vdots&\vdots \\ 
0&0 &0 &\cdots & I & \Phi^{(n-1,n)}(s)\\
 0&0 &0 &\cdots &  0&I
\end{array}\right),
\end{align*}
we only need to show that the limits
\begin{equation}\label{blocks finite}
\lim_{s\to 1}\Phi^{(p,p+k)}(s)<\infty \text{ for } 0\le p<p+k \le n .
\end{equation}

For fixed $p$, the blocks $\Phi^{(p,p+k)}(s)$ define the section of the Hodge bundle 
$$\Omega_{(p)}(s)=\eta_{(p)}+\sum_{k\ge 1} \Phi^{(p,p+k)}(s)\cdot \eta_{(\beta)},\,s\in [0,1)$$
which is identified with $\tO_{(p)}(s)$ by Lemma \ref{def=hodge}. 

From Theorem \ref{Theorem-closedsection} we have that the section $\tO_{(p)}(s)$ of the Hodge bundle exists globally on $\tilde S$ and can be continued to $s=1$, which implies \eqref{blocks finite}.
\end{proof}

\begin{remark}
The assumption that we are dealing with a family of polarized manifolds cannot be removed in Theorem \ref{main}. Indeed, a projective manifold $X$ can be deformed to its complex conjugate $\bar{X}$ in an analytic family $f:\,\mathcal X\to S$ of projective manifolds over a connected base $S$, where $\Tan \bar{X}=\mathrm{T}^{0,1}X$.
Let $\tilde S$ be the universal cover of $S$. Then there exists a period map $\Phi:\, \tilde S\to D$, where $D$ is the period domain of (non-polarized) Hodge structures.
If we take $[X]$ as the base point in $\tilde S$, then the image $\Phi([\bar X])$ does not lie in $N_{-}$.

For example, the Teichm\"uller space $\mathcal T_{g}$ of Riemann surfaces, which is the universal cover of the moduli space $\mathcal M_{g}$, is connected and contains points $X\neq \bar X$. Another interesting example is given by the hyperk\"ahler manifold $(X;I,J,K)$, where $I,J,K$ are the three complex structures on $X$. In this case, one can construct a family of complex structures $\mathcal X\to \mathbb P^{1}$ such that for any $z=[x_{1},x_{2},x_{3}]\in \mathbb P^{1}$, where $(x_{1},x_{2},x_{3})$ are the coordinates of $\mathbb R^{3}$, the complex structure on $X_{z}$ is given by
$x_{1}I+x_{2}J+x_{3}K$. Then $X_{[1,0,0]}=(X;I)$ is deformed to its complex conjugate $X_{[-1,0,0]}=(X;-I)$ along $\mathbb P^{1}$.
\end{remark}

\begin{theorem}\label{main2}
Let the assumptions be as in Theorem~\ref{main}. Then the sections
\begin{equation}\label{lm section''}
  \Omega_{(p)}(t)=\eta_{(p)}+\sum_{k\ge 1} \Phi^{(p,p+k)}(t)\cdot \eta_{(\beta)},\quad 0\le p \le n
\end{equation}
of the Hodge bundles $\mathcal F^{n-p}$ over $t\in \tilde S$, defined via the matrix representation of the image $\P(t)$ in $N_-$, 
are identified with the sections
\begin{equation}\label{def section'}
  \tO_{(p)}(t)=\left[\mathbb H_{\mathrm{pr}} \left(e^{i_{\phi(t)}}\left((I+Ti_{\phi}(t))^{-1}\tilde\eta_{(p)}\right)\right)\right],
\end{equation}
constructed using the Beltrami differential $\phi(t)$ associated to $X_{t} = (X_{t_{o}})_{\phi(t)}$, where $\tilde\eta_{(p)}$ is a column vector of harmonic $(n-p,p)$-forms on $X_{t_o}$ representing $\eta_{(p)}$. In particular, the sections $\tO_{(p)}(t)$ are holomorphic on ${\tilde{S}}$.
\end{theorem}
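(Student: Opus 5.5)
By Theorem~\ref{main}, $\P(\tilde S)\subseteq N_-$, so $\tilde S^{\vee}=\tilde S$. The blocks $\Phi^{(p,q)}(t)$ are therefore defined at every $t\in\tilde S$, and \eqref{lm section''} makes sense globally. The plan is to rerun Lemma~\ref{def=hodge} on all of $\tilde S$, and then to read off holomorphicity of $\tO_{(p)}$ from the holomorphicity of $\Omega_{(p)}$.

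\textbf{Step 1: $\Omega_{(p)}$ is a holomorphic section of $\mathcal F^{n-p}$ on all of $\tilde S$.} The map $\P:\tilde S\to\check D$ is holomorphic. On the open set $N_-\subseteq\check D$, the map $cB\mapsto c$ is a biholomorphism onto the unipotent group (Remark on $N_-$). Hence each entry $\Phi^{(p,q)}(t)$ is a holomorphic function on $\tilde S$, and so is $\Omega_{(p)}(t)=\eta_{(p)}+\sum_k\Phi^{(p,p+k)}(t)\eta_{(p+k)}$, viewed as a map into the fixed vector space $H$. By the construction in Section~\ref{basic lemma}, $\Omega_{(0)}(t),\dots,\Omega_{(p)}(t)$ is an adapted basis of $F^{n-p}_t$ at every $t\in\tilde S$.

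\textbf{Step 2: $\Omega_{(p)}=\tO_{(p)}$ at every point.} Fix $t\in\tilde S$. By Lemma~\ref{def basis}, $\tO_{(0)}(t),\dots,\tO_{(p)}(t)$ is a basis of $F^{n-p}_t$, so we can write $\Omega_{(p)}(t)=\sum_{i\le p}A_{(i)}(t)\tO_{(i)}(t)$. Then compare the components in $H^{n,0}_{\mathrm{pr}}(X_{t_o}),\dots,H^{n-p,p}_{\mathrm{pr}}(X_{t_o})$, exactly as in the proof of Lemma~\ref{def=hodge}. This works because the $k$-th summand of $\tO_{(i)}$ lies in $H^{n-i-k,i+k}_{\mathrm{pr}}(X_{t_o})$. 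The triangular system gives $A_{(0)}=\dots=A_{(p-1)}=0$ and $A_{(p)}=I$. That lemma's argument used only $t\in\tilde S^{\vee}$, which now holds everywhere.

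A second route to Step 2 is useful as a cross-check. Identity on the dense open subset (Lemma~\ref{def=hodge} plus Proposition~\ref{codimension}) together with continuity of both sides would also give equality everywhere. That route, however, needs continuity of $\tO_{(p)}$ in $t$, which the pointwise argument does not.

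\textbf{Step 3: holomorphicity.} Since $\tO_{(p)}=\Omega_{(p)}$ on $\tilde S$ and $\Omega_{(p)}$ is holomorphic by Step 1, $\tO_{(p)}$ is holomorphic. Note that $\tO_{(p)}$ is a priori only defined pointwise from $\phi(t)$, and $\phi(t)$ depends on a choice of diffeomorphism $d_t$.

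\textbf{Main obstacle.} The main delicate point is Step 2. The class $\tO_{(p)}(t)$ must be a well-defined element of $H$, independent of the choice of $d_t$ and its Moser correction. It must also be genuinely represented in the type decomposition at $t_o$ as written. I would rely on Theorem~\ref{Theorem-closedsection}, where simple connectedness of $\tilde S$ and Ehresmann's theorem give this independence. With that accepted, the type comparison is purely linear algebra, and no analytic continuation or continuity of $\phi(t)$ is needed.
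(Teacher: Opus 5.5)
Your proposal is correct and matches how the paper obtains this statement in the body: Theorem~\ref{main} gives $\tilde S^{\vee}=\tilde S$, so the type comparison of Lemma~\ref{def=hodge} identifies $\Omega_{(p)}$ with $\tO_{(p)}$ at every point, and $\tO_{(p)}$ inherits holomorphicity from the holomorphic matrix entries $\Phi^{(p,q)}$ of the period map. The introduction phrases the same identification differently, as a holomorphic (Riemann) extension of $\Omega_{(p)}$ from the dense open set $\tilde S^{\vee}$; that is your second route, and, as you note, it implicitly needs local boundedness or continuity of $\tO_{(p)}$ in $t$, which your primary route avoids by invoking Theorem~\ref{main}.
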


\begin{remark}
 It is interesting to observe that the sections of Hodge bundles in \eqref{lm section''} constructed from period maps have a nice infinitesimal property, as in Lemma \ref{lm derivative lemma}, while the sections of Hodge bundles in \eqref{def section'} constructed from deformation theory have a nice global property.
\end{remark}


\section{Affine structures on the moduli spaces and examples}\label{affine and examples section}
In this section, we review the notions of moduli spaces, level-$m$ structures, and Teichmüller spaces for polarized manifolds, together with the relevant Hodge theory and period maps. Building on Theorem \ref{main4}, we construct a holomorphic map from the Teichm\"uller space of Calabi--Yau type manifolds to a complex Euclidean space, which induces a global complex affine structure.

\subsection{Moduli spaces and period maps}\label{moduli and period}
\

In this section, we introduce the notions of moduli spaces, level $m$ structure and Teichm\"uller spaces for polarized manifolds. We also review Hodge theory and define the period maps from the moduli spaces and Teichm\"uller spaces. 

The moduli space $\M_{\mathrm{p}}$ of polarized manifolds is the complex analytic space parameterizing the isomorphism class of polarized manifolds with the isomorphism defined by
$$(X,L)\sim (X',L')\EQ \exists \ \text{biholomorphic map} \ f : X\to X' \ \text{s.t.} \ f^*L'=L .$$

We fix a lattice $H_{\mathbb Z}$ with a pairing $Q_{0}$, where $H_{\mathbb Z}$ is isomorphic to $H_{\mathrm{pr}}^n(X_{0},\mathbb{Z})/\text{Tor}$ for some $X_{0}$ in $\M$ and $Q_{0}$ is defined by the cup-product.
For a polarized manifold $(X,L)\in \M_{\mathrm{p}}$, we define a marking $\gamma$ as an isometry of the lattices
\begin{equation}\label{marking}
\gamma :\, (H_{\mathbb Z}, Q_{0})\to (H_{\mathrm{pr}}^n(X,\mathbb{Z})/\text{Tor},Q).
\end{equation}

For any integer $m\geq 3$, we follow the definition of Szendr\"oi \cite{Szendroi}
 to define an $m$-equivalent relation of two markings on $(X,L)$ by
$$\gamma\sim_{m} \gamma' \text{ if and only if } \gamma'\circ \gamma^{-1}-\text{Id}\in m \cdot\text{End}(\H^n(X,\mathbb{Z})/\text{Tor}),$$
and denote $[\gamma]_{m}$ to be the set of all the $m$-equivalent classes of $\gamma$.
Then we call $[\gamma]_{m}$ a level $m$
structure on the polarized manifold $(X,L)$.

Two polarized manifolds with level $m$ structure $(X,L,[\gamma]_{m})$ and $(X',L',[\gamma']_{m})$ are said to be isomorphic, or equivalent, if there exists a biholomorphic map $f : X\to X'$ such that
\begin{equation}\label{level m equi}
  f^*L'=L \text{ and } f^*\gamma'\sim_{m} \gamma,
\end{equation}
where $f^*\gamma'$ is given by $$\gamma':\, (H_{\mathbb Z}, Q_{0})\to (\H^n(X',\mathbb{Z})/\text{Tor},Q)$$ composed with the induced map
$$f^*:\, (\H^n(X',\mathbb{Z})/\text{Tor},Q)\to (\H^n(X,\mathbb{Z})/\text{Tor},Q).$$
We denote by $[X, L, [\gamma]_{m}]$ the isomorphism class of the polarized manifolds with level $m$ structure $(X, L, [\gamma]_{m})$.

The moduli space of polarized manifolds with level $m$ structure is the analytic space which parameterizes the isomorphism class of polarized manifolds with level $m$ structure, where $m\ge 3$.
Let $\mathscr{L}_{m}$ be the moduli space of polarized manifolds with level $m$ structure, $m\ge 3$, which contains the given polarized manifold $(X,L)$.

\begin{definition}\label{T-class} A polarized manifold  $(X,L)$ is said to belong to the T-class, if the irreducible component $\Z$ of $\mathscr{L}_{m}$ containing $(X,L)$ is a
complex manifold, on which there is a universal analytic family $f_{m}:\,\U_{m}\to \Z$ with a line bundle $\mathcal L_m$ on $\U_m$ for all $m\ge m_{0}$, where $m_{0}\ge 3$ is some integer.
\end{definition}
Without loss of generality, we may simply take $m_{0}=3$.

In this paper we only consider the polarized manifold $(X,L)$ belonging to the T-class, and the smooth moduli space $\Z$ for $m\geq 3$. Our purpose of introducing this notion is for us to  work on the smooth covers of the moduli spaces.


Let $\T^m$ be the universal covering of $\Z$ with covering map $\pi_m:\, \T^m\to \Z$. Then we have an analytic family $g_m:\, \V^m \to \T^m$ such that the following diagram is cartesian
$$ \begin{CD}
\V^m @> >> \U_m \\
@V Vg_mV  @V Vf_mV \\
\T^m @> >> \Z
\end{CD} $$
i.e. $\V^m=\U_m \times_{\Z}\T^m$. The family $g_m$ is called the pull-back family.

The proof of the following lemma is obvious.
\begin{lemma}
The space $\T^m$ is a connected complex manifold on which there is the pull-back family $g_m:\, \V^m \to \T^m$ with a line bundle $\tilde{\mathcal L}_m$ on $\V^m$.
\end{lemma}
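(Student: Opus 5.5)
The statement is that $\T^m$ is a connected complex manifold and that the pull-back family $g_m:\,\V^m\to\T^m$ carries a line bundle $\tilde{\mathcal L}_m$. Both parts follow from standard covering space theory combined with Definition \ref{T-class}. I would treat the base $\T^m$ first and then the family $\V^m$ with its line bundle.

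For the base, the starting point is that, by Definition \ref{T-class}, $\Z$ is a complex manifold for $m\ge m_0$. It is also irreducible, being an irreducible component of $\mathscr L_m$, and hence connected. A connected complex manifold is locally path-connected and semi-locally simply connected, so it has a universal covering $\pi_m:\,\T^m\to\Z$. This covering is a local homeomorphism, so pulling back holomorphic charts of $\Z$ along $\pi_m$ gives $\T^m$ the unique complex structure making $\pi_m$ a local biholomorphism. By construction $\T^m$ is connected and simply connected.

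For the family, I would define $\V^m=\U_m\times_{\Z}\T^m$ and take $g_m$ to be the second projection. Since $\pi_m$ is a local biholomorphism, over any evenly covered open set $W\subset\Z$ the space $g_m^{-1}(\text{sheet})$ is biholomorphic to $f_m^{-1}(W)$. Several consequences follow from this local identification:
\begin{itemize}
\item $\V^m$ is a complex manifold.
\item The projection $\V^m\to\U_m$ is a local biholomorphism (indeed a covering map).
\item $g_m$ is proper with differential of maximal rank.
\item The fibre over $t$ is $X_{\pi_m(t)}$.
\end{itemize}
To see that $\V^m$ is connected, note that each fibre is connected (it is a polarized manifold) and the base $\T^m$ is connected. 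The line bundle is then $\tilde{\mathcal L}_m:=p^*\mathcal L_m$, where $p:\,\V^m\to\U_m$ is the projection. Its restriction to each fibre is $L_{\pi_m(t)}$, which is ample, so $(\V^m,\tilde{\mathcal L}_m)\to\T^m$ is an analytic family of polarized manifolds in the sense of Section \ref{Lie}.

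The only point needing any care is the smoothness of the fibre product and the non-degeneracy of $g_m$. Both are local statements, and both reduce to the corresponding properties of $f_m$ because $\pi_m$ is étale. I therefore do not expect a genuine obstacle anywhere in the argument.
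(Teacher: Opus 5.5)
Your argument is correct and supplies exactly the routine covering-space reasoning the paper has in mind; the paper gives no proof and simply calls the lemma obvious. You take the universal cover of the connected complex manifold $\Z$, form $\V^m=\U_m\times_{\Z}\T^m$ along the local biholomorphism $\pi_m$, and pull back $\mathcal L_m$. The one step you leave implicit is in deducing that $\V^m$ is connected from connected fibres and a connected base: this uses that $g_m$ is open (being a submersion) or closed (being proper), and you have already established both.
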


The following lemma proves that the space $\T^m$ is independent of the level $m$. 

\begin{lemma}\label{independent of m}
The space $\T^m$ does not depend on the choice of $m$. From now on, we simply denote $\T^m$ by $\T$, the analytic family by $g:\, \V \to \T$ and the covering map by $\pi_m:\, \T\to \Z$.
\end{lemma}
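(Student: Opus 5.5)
The plan is to compare two levels $m, m'\ge 3$ through the common refinement $mm'$. It suffices to show that $\T^{mm'}\cong\T^{m}$ (and symmetrically $\T^{mm'}\cong\T^{m'}$) as complex manifolds, compatibly with the pull-back families. Then every $\T^m$ is identified with a single space $\T$, together with a single family $g:\V\to\T$.

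First I will construct the forgetful map. If $\gamma\sim_{mm'}\gamma'$, then $\gamma'\circ\gamma^{-1}-\mathrm{Id}\in mm'\cdot\End\subseteq m\cdot\End$, so $\gamma\sim_m\gamma'$. Hence the assignment $[X,L,[\gamma]_{mm'}]\mapsto[X,L,[\gamma]_m]$ is well defined on isomorphism classes, using \eqref{level m equi}. Let $\mathcal Z_{mm'}$ be the irreducible component of $\mathscr L_{mm'}$ lying over $\Z$. Using the universal families $f_{mm'}:\U_{mm'}\to\mathcal Z_{mm'}$ and $f_m:\U_m\to\Z$ (T-class), the family $\U_{mm'}$ with the induced level $m$ structure is classified by a holomorphic map
$$\pi_{mm',m}:\ \mathcal Z_{mm'}\to\Z,$$
and $\U_{mm'}\cong \pi_{mm',m}^*\U_m$.

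Next I will show that $\pi_{mm',m}$ is a finite unramified covering. Its fibre over $[X,L,[\gamma]_m]$ is the set of level $mm'$ structures refining $[\gamma]_m$, modulo automorphisms $f$ of $(X,L)$ that preserve $[\gamma]_m$. This set is a quotient of the finite group $\ker\big(G_{\mathbb Z}/G(mm')\to G_{\mathbb Z}/G(m)\big)$, where $G(k)$ is the level-$k$ congruence subgroup. The key input, and the step I expect to be the main obstacle, is the rigidity statement. For $m\ge3$, an automorphism $f$ of $(X,L)$ is of finite order, since it preserves the polarization. If it also acts trivially on $H^n_{\mathrm{pr}}(X,\mathbb Z)/\mathrm{Tor}$ modulo $m$, then it acts trivially on that lattice, by Serre's lemma: a finite-order integral matrix congruent to $I$ mod $m\ge 3$ equals $I$.

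With this lemma, no nontrivial automorphism of a point of $\Z$ preserves its level $m$ structure while moving a level $mm'$ structure. Hence the fibres have constant cardinality $[G(m):G(mm')]$, and local universality of both families shows that $\pi_{mm',m}$ is locally biholomorphic. Being also proper with finite fibres, it is a finite covering map between connected complex manifolds. I would first try to get the local biholomorphicity directly from the universal property: near a point, both moduli spaces are local Kuranishi-type bases of the same family. I would use Serre's lemma only to exclude ramification.

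Finally, a finite covering of connected manifolds induces a biholomorphism of universal covers. So $\T^{mm'}\cong\T^m$, and the pull-back families agree, because $\V^{mm'}=\U_{mm'}\times_{\mathcal Z_{mm'}}\T^{mm'}\cong\U_m\times_{\Z}\T^m=\V^m$, together with the induced line bundles. The same argument with $m'$ gives $\T^{mm'}\cong\T^{m'}$, which proves the lemma.
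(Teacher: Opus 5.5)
Your proposal follows the paper's proof. You pass to the common level $mm'$ and use that $\mathcal Z_{mm'}$ covers $\mathcal Z_m$ (and $\mathcal Z_{m'}$) with the universal family pulled back, so the universal covers and pulled-back families coincide; the paper cites Popp and Szendr\"oi for the covering statement, while you sketch it yourself via Serre's lemma. One small fix in that sketch: an automorphism $f$ of $(X,L)$ need not itself have finite order (e.g.\ when $X$ has holomorphic vector fields). What Serre's lemma actually requires does hold, however: $f^*$ on $H^n_{\mathrm{pr}}(X,\mathbb Z)/\mathrm{Tor}$ has finite order, because it preserves $Q$ and the Hodge decomposition and so lies in $G_{\mathbb Z}$ intersected with a compact isotropy group (a conjugate of $V$), which is finite.
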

\begin{proof} 
The proof  uses the construction of moduli space with level $m$ structure, see Lecture 10 of \cite{Popp}, or pages 692 -- 693 of \cite{Szendroi}.

Let $m_1$ and $m_2$ be two different integers, and
$$f_{m_{1}}:\,\U_{m_1}\to \mathcal{Z}_{m_1}, \ f_{m_{2}}:\,\U_{m_2}\to \mathcal{Z}_{m_2}$$ be two analytic families with level $m_1$ structure and level $m_2$ structure respectively.
Let $\T^{m_{1}}$ and $\T^{m_{2}}$ be the universal covering space of $\mathcal{Z}_{m_1}$ and $\mathcal{Z}_{m_2}$ with the pull back family
$$g_{m_{1}}:\,\V^{m_1}\to \mathcal{\T}^{m_1}, \ g_{m_{2}}:\,\V^{m_2}\to \mathcal{\T}^{m_2}$$
of $f_{m_{1}}$ and $f_{m_{2}}$ respectively.
Let $m=m_1m_2$ and consider the analytic family $$f_{m}:\,\U_{m}\to \mathcal{Z}_{m}.$$
From the discussion in Page 130 of \cite{Popp} or 692 -- 693 of \cite{Szendroi}, we know that $\mathcal{Z}_{m}$ is a covering space of both $\mathcal{Z}_{m_1}$ and $\mathcal{Z}_{m_2}$, and the analytic family $f_{m}$ over $\Z$ is the pull-back family of both $f_{m_{1}}$ and $f_{m_{2}}$ via the corresponding covering maps.

Let $\T$ be the universal covering space of $\mathcal{Z}_{m}$ with the pull-back family
$$g:\,\V\to \mathcal{\T}$$ of $f_{m}$. Since $\mathcal{Z}_{m}$ is a covering space of both $\mathcal{Z}_{m_1}$ and $\mathcal{Z}_{m_2}$, we conclude that $\T$ is the universal cover of both $\mathcal{Z}_{m_1}$ and $\mathcal{Z}_{m_2}$, i.e. $$\T^{m_1}\simeq \T^{m_2} \simeq \mathcal{T},$$
and that the analytic family $g$ is identified with the analytic families $g_{m_{1}}$ and $g_{m_{2}}$ via the above isomorphisms.
\end{proof}

Due to the above lemma, we can denote $\T=\T^m$ for any $m\ge 3$, and call $\T$ the Teichm\"uller space of polarized manifolds.


For the family $f_m:\,  \U_m \to \Z$, we have the period map
$$\Phi_{\Z} :\, \Z \to \Gamma_{m} \backslash D$$
as given in Section \ref{Lie}, where 
$$\rho :\,  \pi_1(\Z)\to \Gamma_{m} \subseteq \text{Aut}(H_{\mathbb{Z}},Q),$$
is the monodromy representation.

We can lift the period map onto the universal cover $\T$ of $\Z$, to get the lifted period map $\Phi :\, \T \to D$ such that the diagram
$$\xymatrix{
\T \ar[r]^-{\Phi} \ar[d]^-{\pi_m} & D\ar[d]^-{\pi_{D}}\\
\Z \ar[r]^-{\Phi_{\Z}} & \Gamma_{m} \backslash D
}$$
is commutative.

From Theorem \ref{main}, we have the following:

\begin{theorem}\label{main4}
Let $\T$ be the Teichm\"uller space of polarized manifolds as defined in Section \ref{moduli and period}. Then the image of the period map 
$$\Phi:\, \T \to D$$
lies in the complex Euclidean space $N_-$.
\end{theorem}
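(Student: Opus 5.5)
The plan is to deduce Theorem~\ref{main4} directly from Theorem~\ref{main} by checking that the setup of Section~\ref{moduli and period} is a special case of it. By Definition~\ref{T-class}, for $m\ge 3$ the component $\Z$ is a connected complex manifold carrying the universal analytic family $f_m:\,\U_m\to\Z$ of polarized manifolds together with the line bundle $\mathcal L_m$. This is exactly an analytic family $f:\,\X\to S$ in the sense of Section~\ref{Lie}, with $S=\Z$. The family is smooth and proper with non-degenerate differential, and each fiber is polarized by the restriction of $\mathcal L_m$.

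Next I will identify the lifted period map. By Lemma~\ref{independent of m}, $\T$ is the universal cover of $\Z$. The pull-back family $g:\,\V\to\T$ with line bundle $\tilde{\mathcal L}_m$ is precisely the induced family $\tilde{\mathcal X}\to\tilde S$ used in Sections~\ref{basic lemma}--\ref{main proof}. The lifted period map $\Phi:\,\T\to D$ from the commutative square with $\Gamma_m\backslash D$ is then the lifted period map $\tilde S\to D$ of Section~\ref{Lie} for the family $f_m$. Two routine points need care. First, the monodromy group $\Gamma_m$ is torsion-free for $m\ge 3$, by Serre's lemma on level structures. Second, the lattice $H_{\mathbb Z}$ used here is the primitive lattice $H_{\mathrm{pr}}^n(X_0,\mathbb Z)/\mathrm{Tor}$, which matches the one fixed in Section~\ref{Lie}.

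Once these identifications are made, I apply Theorem~\ref{main} to $S=\Z$ and $\tilde S=\T$. It gives $\Phi(\T)\subseteq N_-$, where $N_-$ is taken with respect to the base point $o=\Phi(t_o)$ for any chosen $t_o\in\T$.

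The only step needing attention is independence of the level $m$. Different $m$ produce different groups $\Gamma_m$ and different maps $\Phi_{\Z}$. However, Lemma~\ref{independent of m} identifies both $\T$ and its family $g:\,\V\to\T$ for all $m$. The lifted map $\Phi:\,\T\to D$ depends only on the family over $\T$, the base point, and the marking at $t_o$, so the conclusion does not depend on $m$. I expect no real obstacle: all the analytic work, namely the global sections $\tO_{(p)}$ and Lemma~\ref{def=hodge}, is already contained in Theorem~\ref{main}.
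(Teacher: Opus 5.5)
Your proposal is correct and matches the paper, which obtains Theorem~\ref{main4} as an immediate consequence of Theorem~\ref{main} applied to the universal family $f_m:\,\U_m\to\Z$, with $\tilde S=\T$ the universal cover. Your extra checks are bookkeeping the paper leaves implicit: the torsion-freeness of $\Gamma_m$ for $m\ge 3$ is true but not actually needed, and the level-independence follows from Lemma~\ref{independent of m}.
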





\subsection{Affine structure on the Teichm\"uller space of Calabi--Yau type manifolds}\label{affine}
\

In this section, we first apply Theorem \ref{main4} in
the previous section to construct an affine structure on the Teichm\"uller space $\T$ of Calabi--Yau type manifolds.

\begin{definition}\label{defn of gcyt}
Let $X$ be a projective manifold with $\dim_{\mathbb{C}}X=d$. We call $X$ a Calabi--Yau type manifold if it satisfies the following conditions:
\begin{enumerate}
\item[(i)] There exists some integer $k$ with $[d/2]< k\leq d$ such that
$$H_{\mathrm{pr}}^{l,d-l}(X)=0$$
for $k< l \le d$, and $\dim_{\mathbb{C}}H_{\mathrm{pr}}^{k,d-k}(X)=1$;

\item[(ii)] For any generator $\Omega\in H_{\mathrm{pr}}^{k,d-k}(X)$, the contraction map
\begin{align*}
\lrcorner:\, H^{1}(X,\Theta_X)\to H_{\mathrm{pr}}^{k-1,d-k+1}(X), \quad\quad
\phi\mapsto \phi\lrcorner\Omega
\end{align*}
is an isomorphism.
\label{condition2}
\end{enumerate}
\end{definition}

A Calabi--Yau manifold is defined to be a projective manifold $X$ with trivial canonical bundle $\Omega_{X}^{\dim_{\C}X}$ such that
$H^{i}(X,\mathcal O_{X})=0$ for $0< i<\dim_{\C}X$.
Thus, a Calabi--Yau manifold is, of course, a Calabi--Yau type manifold with $k=d$ in the above definition. However, there exist further examples, such as certain hypersurfaces in the projective space $\mathbb P^{n}$, whose canonical bundle is nontrivial but which satisfy the conditions in the above definition.

Before presenting the main theorem of this section, we recall the definition of a complex affine structure on a complex manifold.
\begin{definition}
Let $M$ be a complex manifold of complex dimension $n$. If there
is a coordinate cover $\{(U_i,\,\phi_i);\, i\in I\}$ of M such
that $\phi_{ik}=\phi_i\circ\phi_k^{-1}$ is a holomorphic affine
transformation on $\mathbb{C}^n$ whenever $U_i\cap U_k$ is not
empty, then $\{(U_i,\,\phi_i);\, i\in I\}$ is called a complex
affine coordinate cover on $M$ and it defines a holomorphic affine structure on $M$.

In particular, if there exists a holomorphic map $\phi:\, M\to \C^n$ such that the tangent map $d\phi$ is non-degenerate everywhere, then the affine structure on $M$ induced by $\phi$ is called a global affine structure.
\end{definition}

The following main theorem of this section is an immediate consequence of Remark \ref{trans remark}.

\begin{theorem}\label{affine structure} 
Let $X$ be a Calabi--Yau type manifold.
Assume that the Teichm\"uller space $\T$ of Calabi--Yau type manifolds containing $X$ exists as in Section \ref{moduli and period}.
Then there exists a global complex affine structure on $\T$.
\end{theorem}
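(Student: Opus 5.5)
We use the period map $\Phi:\,\T\to D$ of the Calabi--Yau type family. We work with the weight $d$ primitive cohomology and re-index the Hodge filtration so that the first nonzero block is $F^k$. By condition (i) of Definition \ref{defn of gcyt}, $F^{k+1}_t=0$ and $\dim F^k_t=1$. By Theorem \ref{main4}, $\Phi(\T)\subseteq N_-\cap D$, so $\T^{\vee}=\T$ and the block matrix representation \eqref{block of n+} of $\Phi(t)$ is defined for every $t\in\T$. Each block $\Phi^{(p,q)}(t)$ is then a holomorphic function on all of $\T$, being an entry of the holomorphic map $\T\to N_-\cong\C^{\dim \mathfrak n_-}$. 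In particular the first off-diagonal block $\Phi^{(0,1)}(t)$ is a $1\times h^{k-1,d-k+1}$ row vector. We define
$$\Psi:\,\T\to \C^{h^{k-1,d-k+1}},\quad t\mapsto \Phi^{(0,1)}(t),$$
which is a globally defined holomorphic map.

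We next check that $h^{k-1,d-k+1}=\dim_\C\T$. For $t\in\T$, the local deformation space of $X_t$ has tangent space $H^1(X_t,\Theta_{X_t})$, because $\T$ is a cover of a smooth component of the moduli space carrying a universal family. Condition (ii) identifies this tangent space with $H^{k-1,d-k+1}_{\mathrm{pr}}(X_t)$. We should note here that conditions (i) and (ii) hold at every point of $\T$: (i) holds by upper semicontinuity together with constancy of Hodge numbers in smooth families, and (ii) should be taken as part of the hypothesis that $\T$ is the Teichm\"uller space of Calabi--Yau type manifolds.

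The key step is non-degeneracy of $d\Psi$ everywhere. By Remark \ref{trans remark}, for local coordinates $(t_\mu)$ near $t$, the $\mathrm{Hom}(F^k_t,F^{k-1}_t/F^k_t)$-component of $(d\Phi)_t(\partial/\partial t_\mu)$ sends $\Omega_{(0)}(t)$ to $\frac{\partial\Phi^{(0,1)}}{\partial t_\mu}(t)\cdot\Omega_{(1)}(t)$, and $\Omega_{(1)}(t)$ is a basis of $F^{k-1}_t/F^k_t\cong H^{k-1,d-k+1}_{\mathrm{pr}}(X_t)$. By Griffiths' computation, this component equals the composition of the Kodaira--Spencer map $\kappa_t:\mathrm{T}_t\T\to H^1(X_t,\Theta_{X_t})$ with the contraction $\phi\mapsto\phi\lrcorner\Omega_{(0)}(t)$. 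The map $\kappa_t$ is an isomorphism because the family is universal (locally Kuranishi), and the contraction is an isomorphism by (ii). Hence the Jacobian matrix $\bigl(\partial\Phi^{(0,1)}_j/\partial t_\mu\bigr)$ is square and invertible at each $t$, so $\Psi$ is a local biholomorphism. Pulling back the standard affine coordinates of $\C^{\dim\T}$ by $\Psi$ then gives the global complex affine structure. The transition maps are identities, hence affine.

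The main obstacle is justifying this identification uniformly on $\T$. Remark \ref{trans remark} gives the matrix description of $d\Phi$ only relative to the fixed base point's basis $\eta$, so we have to make sure that the first graded piece of $d\Phi$ computed in the $\Omega(t)$ basis really is the intrinsic infinitesimal period map $\kappa_t$ followed by contraction at $X_t$. We would handle this by recomputing $\partial\Omega_{(0)}/\partial t_\mu \bmod F^k_t$ via the Gauss--Manin connection at $t$, using Griffiths' formula $\nabla_{\partial/\partial t_\mu}[\Omega]\equiv[\kappa(\partial/\partial t_\mu)\lrcorner\Omega] \bmod F^k$.
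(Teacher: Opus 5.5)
Your proposal is correct and follows the paper's proof. Theorem~\ref{main4} puts $\Phi(\T)$ inside $N_-$, so $\Psi=\Phi^{(0,1)}$ is globally defined, and the Jacobian is non-degenerate by Remark~\ref{trans remark} combined with the Kodaira--Spencer map and the contraction isomorphism (ii) of Definition~\ref{defn of gcyt}. The only differences are in presentation. You re-index the filtration where the paper Tate-twists to weight $2k-d$, and you spell out two points the paper takes for granted: the dimension count, and the Gauss--Manin identification of $\partial\Phi^{(0,1)}/\partial t_\mu$ with $\kappa_t(\partial/\partial t_\mu)\lrcorner\Omega_{(0)}(t)$ modulo $F^k_t$. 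Like the paper, you also assume that the Kodaira--Spencer map of the universal family is an isomorphism onto $H^1(X_t,\Theta_{X_t})$.
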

\begin{proof}
We will construct the affine structure on $\T$ in the proof. 

First, we take the Hodge structure on $H=H^d_{\mathrm{pr}}(X,\C)$ and tensor it with the Tate Hodge structure
$$T(d-k)=T^{d-k,d-k}\simeq \C,$$
to obtain a Hodge structure
$$H=\H^{n,0}(X)\oplus \H^{n-1,1}(X)\oplus \cdots \oplus \H^{0,n}(X)$$
of weight $n=2k-d$. Then we have that $H^{n,0}(X)$ is one-dimensional with a generator $\Omega$ such that 
\begin{align}\label{CY type iso}
(\cdot)\lrcorner\Omega:\, H^{1}(X,\Theta_X)\to \H^{n-1,1}(X),
\end{align}
is an isomorphism.

By Theorem \ref{main4}, we have a period map $$\Phi:\, \T \to N_{-}\cap D,\, t\mapsto \left(\Phi^{(p,q)}(t)\right)_{0\le p,q\le n}$$
be the period map from Teichm\"uller space $\T$ of Calabi--Yau type manifolds.

We define the holomorphic map 
$$\Psi:\, \T \to \C^{N}$$
by mapping $t\in \T$ to $\Phi^{(0,1)}(t)$, where $N=\dim_{\C} H^{1}(X,\Theta_X)$ and the block $\Phi^{(0,1)}(t)$ is a row vector of dimension $N$.

Let $(t_{\mu})_{1\le \mu \le N}$ be a holomorphic coordinate on $\T$.
From Remark \ref{trans remark}, we have that 
$$ (d\P)_{t}\left(\frac{\partial}{\partial t_{\mu}}\right)=\bigoplus_{0\le p\le n-1}\frac{\partial\Phi^{(p,p+1)}}{\partial t_\mu}(t),$$
where 
$$\frac{\partial\Phi^{(0,1)}}{\partial t_\mu}(t)\in \mathrm{Hom}(\H^{n,0}(X_{t}),\H^{n-1,1}(X_{t}))$$
is given by the contraction $\rho\left(\frac{\partial}{\partial t_{\mu}}\right)\lrcorner\Omega_{t}$, where $\Omega_{t}\in H_{\mathrm{pr}}^{n,0}(X_{t})$ is a generator and
$$\rho:\, \mathrm T_{t}\Delta \to H^{1}(X_{t},\Theta_{X_{t}})$$
is the Kodaira--Spencer map. 

Therefore from the local Torelli assumption \eqref{CY type iso}, we have that the Jacobian 
$$\left(\begin{array}{c}
\frac{\partial\Phi^{(0,1)}}{\partial t_1}(t) \\
\vdots\\
\frac{\partial\Phi^{(0,1)}}{\partial t_N}(t)
\end{array}\right)
$$
is non-degenerate for any $t\in \T$, which implies that the holomorphic map $\Psi:\, \T \to \C^{N}$ defines a global affine structure on $\T$.
\end{proof}

\begin{corollary}
There exists a global section of the Hodge bundle $\mathcal F^{k}$ on the Teichm\"uller space $\T$ of Calabi--Yau type manifolds,
\begin{eqnarray}
\Omega_{(n-k)}(z^{c})&=&\eta_{(n-k)}+z^{c}\cdot \eta_{(n-k+1)}+\sum_{i\ge 2}\Phi^{(n-k,n-k+i)}(z^{c})\cdot \eta_{(n-k+i)},\label{CY type section}
\end{eqnarray}
where $\eta=\{\eta_{(n-k)}^T,\eta_{(n-k+1)}^T,\cdots,\eta_{(k)}^T\}^T$ is the adapted basis of the Hodge decomposition
$$H=\H^{k,n-k}(X_{t_{o}})\oplus \H^{k-1,n-k+1}(X_{t_{o}})\oplus \cdots \oplus \H^{n-k,k}(X_{t_{o}})$$
 at the base point $t_{o}$, and $z^{c}(t)=\Phi^{(0,1)}(t)$, $t\in \T$, is the global affine coordinates given by Theorem \ref{affine structure}.
\end{corollary}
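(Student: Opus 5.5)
The plan is to read the corollary as the specialization of the matrix-defined section $\Omega_{(0)}$ to the Calabi--Yau type setting, after the Tate twist used in the proof of Theorem~\ref{affine structure}. After that twist the Hodge structure has weight $n=2k-d$. Its top block $\eta_{(0)}$ spans the one-dimensional space corresponding to $H_{\mathrm{pr}}^{k,d-k}(X_{t_o})$, and the Hodge bundle $\mathcal F^{n}$ of the twisted structure is the original $\mathcal F^{k}$. So the section in \eqref{CY type section}, indexed in the paper's original-weight labelling, is the section $\Omega_{(0)}$ of \eqref{lm section} written in the twisted labelling. I will carry out the argument in the twisted labelling and translate the indices back at the end.

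\emph{Step 1: the section is global.} By Theorem~\ref{main4}, applied to the family $g:\mathcal V\to\T$, the period map sends all of $\T$ into $N_-\cap D$. Hence $\T^{\vee}=\T$, and the block matrix $\P(t)=(\Phi^{(p,q)}(t))$ is defined for every $t\in\T$. Formula \eqref{lm section} then gives
$$\Omega_{(0)}(t)=\eta_{(0)}+\sum_{i\ge1}\Phi^{(0,i)}(t)\,\eta_{(i)}$$
on the whole of $\T$. This is a holomorphic section of $\mathcal F^{n}$, because the period map is holomorphic. It is nowhere vanishing, since its $\eta_{(0)}$-component is identically $1$. Theorem~\ref{main2} identifies it with the deformation-theoretic section $\tO_{(0)}$, which independently confirms that it is globally defined.

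\emph{Step 2: the coordinate substitution.} By the definition in Theorem~\ref{affine structure}, the $(0,1)$-block is exactly $z^c(t)=\Phi^{(0,1)}(t)=\Psi(t)$. Substituting this gives the middle term $z^c\cdot\eta_{(1)}$. It remains to justify writing the higher blocks $\Phi^{(0,i)}$, $i\ge2$, as functions of $z^c$. I will use Lemma~\ref{lm derivative lemma}, which gives
$$d\Phi^{(0,i)}=d\Phi^{(0,1)}\cdot\Phi^{(1,i)}=dz^c\cdot\Phi^{(1,i)}.$$
Since $d\Psi$ is nondegenerate everywhere, the $z^c$ serve as local holomorphic coordinates around every point of $\T$. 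In these coordinates each $\Phi^{(0,i)}$ is a holomorphic function of $z^c$, and its partial derivatives in $z^c$ are the entries of $\Phi^{(1,i)}$.

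\emph{Main obstacle.} The delicate point is the meaning of the notation $\Phi^{(0,i)}(z^c)$, because $\Psi$ is only a local biholomorphism and need not be injective on $\T$. I would interpret the formula as an identity of holomorphic sections over $\T$, with each block understood as a function of $t$. The displayed dependence on $z^c$ then holds in the affine charts of the global affine structure, which is exactly what Theorem~\ref{affine structure} provides. If global injectivity of $\Psi$ were needed, I would not attempt to prove it here; I would state the corollary in this chart-wise sense.

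Finally, I will translate the indices back to the original weight, matching $\mathcal F^n$ of the twisted structure with $\mathcal F^k$ and $\eta_{(0)}$ with $\eta_{(n-k)}$ as in the statement.
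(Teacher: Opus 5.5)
Your proposal is correct and follows essentially the route the paper intends: the corollary is read off from Theorem~\ref{main4}, which makes \eqref{lm section} hold on all of $\mathcal{T}$ in the Tate-twisted labelling, combined with the definition $z^c=\Phi^{(0,1)}$ from Theorem~\ref{affine structure}. Your use of Lemma~\ref{lm derivative lemma}, and your chart-wise reading of $\Phi^{(n-k,n-k+i)}(z^c)$ (needed because $\Psi$ is only shown to be a local biholomorphism, not injective), are sensible clarifications of points the paper leaves implicit.
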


\begin{remark}
For Calabi--Yau manifolds, an analogous expansion formula was first obtained
by deformation-theoretic methods; see \cite{LRY}.  That construction gives a
local, or large-range, expansion.  The corollary above extends this formula to
Calabi--Yau type manifolds and gives a global expansion on the Teichm\"uller
space, with respect to the global affine coordinates $z^{c}=\Phi^{(0,1)}$
constructed in this paper.

From the expression \eqref{CY type section}, one sees that the constant term
and the first order term of the section of the Hodge bundle are explicit,
while the higher order terms lie in
\[
\H^{k-2,n-k+2}(X_{t_o})
\oplus
\H^{k-3,n-k+3}(X_{t_o})
\oplus
\cdots
\oplus
\H^{n-k,k}(X_{t_o}) .
\]
This normal form is useful for computing the Hodge metric and the curvature of
Hodge bundles.  In the Calabi--Yau case, such curvature computations are
closely related to Weil--Petersson geometry, Yukawa couplings, and special
geometry in mathematical physics.  The formula above suggests an analogous
framework for Calabi--Yau type manifolds.
\end{remark}

\begin{corollary}
The canonical bundle $\omega_{\T}$ of the Teichm\"uller space $\T$ of Calabi--Yau type manifolds is trivial.
\end{corollary}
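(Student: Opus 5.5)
The idea is to trivialize $\det \mathrm{T}^{1,0}\T$ using the global affine structure from Theorem~\ref{affine structure}, and then dualize to obtain $\omega_{\T}$.

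First, I would take the holomorphic map $\Psi=\Phi^{(0,1)}:\T\to\C^{N}$ with $N=\dim_\C \T$. By the proof of Theorem~\ref{affine structure}, its Jacobian is non-degenerate at every point of $\T$. The coordinate functions $z^c_1,\dots,z^c_N$ of $\Psi$ are therefore global holomorphic functions on $\T$ whose differentials $dz^c_1,\dots,dz^c_N$ are pointwise linearly independent. Hence they form a global holomorphic frame of the cotangent bundle $\mathrm{T}^{*1,0}\T$.

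Next, I would form the wedge product
\[
dz^c_1\wedge\cdots\wedge dz^c_N .
\]
This is a global holomorphic section of $\omega_\T=\bigwedge^N \mathrm{T}^{*1,0}\T$. In any local holomorphic coordinates $(t_\mu)$ it equals $\det\bigl(\partial\Phi^{(0,1)}_\nu/\partial t_\mu\bigr)\,dt_1\wedge\cdots\wedge dt_N$, and this determinant vanishes nowhere. A nowhere vanishing global section of a line bundle trivializes it, so $\omega_\T\cong\mathcal O_\T$.

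An alternative, more Hodge-theoretic, route would identify $\mathrm{T}^{1,0}\T\cong\Hom(\mathcal F^n,\mathcal F^{n-1}/\mathcal F^n)$ through \eqref{CY type iso}. One would then trivialize $\mathcal F^n$ by the global section $\Omega_{(0)}$ and $\mathcal F^{n-1}/\mathcal F^n$ by the images of the $\Omega_{(1)}$ from Theorem~\ref{main2}. This is not needed, since the first argument already suffices.

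The only point requiring care is that the non-degeneracy of the Jacobian holds at every point of $\T$, not just near the base point. This holds because Theorem~\ref{main4} places the whole image in $N_-$, so the block $\Phi^{(0,1)}$ is defined on all of $\T$. In addition, Remark~\ref{trans remark} together with the local Torelli isomorphism \eqref{CY type iso} applies at every $t$ of the Calabi--Yau type family, since condition (ii) of Definition~\ref{defn of gcyt} holds on each fiber. Beyond this check, the argument is routine.
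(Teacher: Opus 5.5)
Your proposal is correct and is essentially the paper's argument. The paper likewise pulls back $dz_1\wedge\cdots\wedge dz_N$ along the global map $\Psi=\Phi^{(0,1)}:\T\to\C^N$ and uses the everywhere non-degenerate Jacobian from Theorem~\ref{affine structure} to obtain a nowhere vanishing global section of $\omega_{\T}$.
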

\begin{proof}
Let $(z_{1},\cdots, z_{N})$ be the global coordinates on $\C^{N}$.
Then the global affine structure $\Psi:\,\T \to \C^{N}$ gives a global section 
$$\Psi^{*}(dz_{1}\cdots dz_{N})$$
of the canonical bundle $\omega_{\T}$ over $\T$, which is nowhere vanishing. Hence $\omega_{\T}$ is trivial.
\end{proof}

\begin{remark}
In \cite{RamanVafa24}, Ramana and Vafa introduce the notion of a \emph{marked moduli space}, defined as the parameter space of a physical theory together with all of its observables. 
In geometric situations, this notion should be the Teichm\"uller space defined in this paper, which has long served as the natural parameter space for marked geometric structures. 
They formulate two Swampland conjectures concerning the geometry of such spaces: first, that any marked moduli space should be contractible; and second, that with respect to the physical metric, there should exist a unique shortest path connecting any pair of points. 
Evidence for these conjectures is provided in theories with eight or more supercharges.

From this perspective, the existence of a global affine structure on the Teichm\"uller space $\T$ of Calabi--Yau type manifolds, together with the triviality of its canonical bundle $\omega_{\T}$, may be regarded as supporting evidence for these expectations. 
These geometric features suggest a rigidity phenomenon compatible with contractibility and geodesic uniqueness, and may offer useful input toward a mathematical understanding of the conjectures in the Calabi--Yau setting.
\end{remark}

\vspace{+12 pt}

%
%

\end{document}